\documentclass[12pt,a4paper,reqno]{amsart}

\usepackage[T1]{fontenc}
\usepackage[utf8]{inputenc}

\usepackage{amsmath,amssymb,mathtools,mathrsfs}
\usepackage{geometry}
\usepackage[shortlabels]{enumitem}
\usepackage{microtype}
\usepackage{comment}

\usepackage[
colorlinks=true,
linkcolor=blue,
citecolor=blue,
urlcolor=blue
]{hyperref}

\usepackage[
backend=biber,
style=numeric-comp,
sorting=nyt,
sortcites=true,
giveninits=true,
maxbibnames=99,
doi=false,
url=false,
isbn=false,
eprint=false
]{biblatex}

\AtEveryBibitem{%
	\clearfield{doi}%
	\clearfield{url}%
	\clearfield{urldate}%
	\clearfield{isbn}%
	\clearfield{issn}%
}

\DeclareFieldFormat[article,inproceedings,incollection]{title}{#1}

\allowdisplaybreaks
\numberwithin{equation}{section}

\theoremstyle{plain}
\newtheorem{theorem}{Theorem}[section]
\newtheorem{proposition}[theorem]{Proposition}
\newtheorem{lemma}[theorem]{Lemma}
\newtheorem{corollary}[theorem]{Corollary}
\theoremstyle{definition}

\theoremstyle{remark}
\newtheorem{remark}[theorem]{Remark}

\newcommand{\ii}{\mathrm{i}}
\newcommand{\R}{\mathbb R}
\newcommand{\C}{\mathbb C}
\newcommand{\N}{\mathbb N}
\newcommand{\E}{\mathbb E}
\newcommand{\Cov}{\operatorname{Cov}}
\newcommand{\Dom}{\operatorname{Dom}}
\newcommand{\Span}{\operatorname{span}}
\newcommand{\HS}{\mathrm{HS}}

\title[Optimal Gaussian approximation for complex Wiener--It\^o integrals]
{Optimal Gaussian approximation for complex Wiener--It\^o integrals}

\author{Huiping Chen}
\address{Huiping Chen, Academy of Mathematics and Systems Science, Chinese Academy of Sciences, Beijing 100190, China}
\email{chenhp@amss.ac.cn}

\author{Yong Chen}
\address{Yong Chen, School of Big Data, Baoshan University, Baoshan, Yunnan 678000, China}
\email{zhishi@pku.org.cn; chenyong77@gmail.com}

\author{Yong Liu}
\address{Yong Liu, LMAM, School of Mathematical Sciences, Peking University, Beijing 100871, China}
\email{liuyong@math.pku.edu.cn}

\subjclass[2020]{Primary 60F05, 60H05; Secondary 60H07, 62M09}
\keywords{Complex Wiener--It\^o integrals, density approximation, Sobolev norms, non-smooth probability metrics, Malliavin calculus, complex fractional Ornstein--Uhlenbeck process}

\thanks{H. Chen is supported by the China Postdoctoral Science Foundation under Grant Number 2024M763480. Y. Chen is supported by NSFC (No. 12461029) and the PhD Research Startup Fund of Baoshan University (No. BSKY2540). Y. Liu is supported by NSFC (No. 12231002) and the Center for Statistical Science, PKU}

\begin{document}
	
	\begin{abstract}
		We study density regularity and optimal Gaussian approximation for complex multiple Wiener--It\^o integrals. We first give a kernel criterion for absolute continuity and derive explicit formulas for the density and its derivatives, together with finite-order Sobolev estimates. Our main quantitative result shows that the sum of an intrinsic non-Gaussian term and the covariance mismatch gives the optimal rate simultaneously in every fixed Sobolev norm and in total variation, Kolmogorov and $1$-Wasserstein distances. The intrinsic term is determined by two complex third-order moments and the complex fourth-moment defect, and is equivalently described by complex kernel contractions. No additional Malliavin non-degeneracy assumption is imposed in these asymptotic results. As an application, we obtain explicit optimal rates for the normalized numerator in the least-squares estimator of a complex fractional Ornstein--Uhlenbeck process.
	\end{abstract}
	\maketitle
	
	\section*{Notations}
	\begin{center}
		\begin{tabular}{@{}r p{0.68\textwidth}@{}}
			$\mathfrak H$ :& a complex separable Hilbert space with conjugation $h\mapsto\overline h$.\\
			$\mathfrak H_{\R}$ :& the real Hilbert space $\{h\in\mathfrak H:\overline h=h\}$.\\
			$\mathfrak K$ :& the real direct sum $\mathfrak H_{\R}\oplus\mathfrak H_{\R}$.\\
			$\mathfrak U$ :& a generic real separable Hilbert space used to formulate Malliavin calculus; later specialized to $\mathfrak K$. \\
			$Z$ :& a complex isonormal Gaussian process over $\mathfrak H$.\\
			$W$ :& the associated real isonormal Gaussian process over $\mathfrak K$.\\
			$G$ :& a generic real isonormal Gaussian process over $\mathfrak U$, used to formulate Malliavin calculus; later specialized to $W$. \\
			$\mathscr H_{p,q}(Z)$ :& the $(p,q)$-th complex Wiener--It\^o chaos of $Z$.\\
			$\mathcal H_r(W)$ :& the $r$-th real Wiener--It\^o chaos of $W$.\\
			$I_{p,q}(f)$ :& the complex $(p,q)$-th Wiener--It\^o integral of $f\in\mathfrak H^{\odot p}\otimes\mathfrak H^{\odot q}$ with respect to $Z$.\\
			$I_r^W(u)$ :& the real $r$-th Wiener--It\^o integral of $u\in\mathfrak K^{\odot r}$ with respect to $W$.\\
			$f^\star$ :& the reversed complex conjugate of a complex kernel $f\in\mathfrak H^{\odot p}\otimes\mathfrak H^{\odot q}$.\\
			$\widetilde F$ :& $(\mathrm{Re}\,F,\mathrm{Im}\,F)$ for a complex random variable $F$.\\
			$D,\delta$ :& the Malliavin derivative and divergence operators associated with $G$, later specialized to $W$.
		\end{tabular}
	\end{center}
	
	\setcounter{tocdepth}{2}
	\tableofcontents
	
	\section{Introduction}\label{sec:Introduction}
	
	Complex Gaussian analysis has a long history and arises naturally in a variety of problems in mathematical physics, stochastic partial differential equations, statistics, and signal processing. It\^o introduced complex multiple Wiener--It\^o integrals and the corresponding complex chaos decomposition through complex Hermite polynomials in \cite{ito1952complex}. Since then, the theory has developed in connection with complex Ornstein--Uhlenbeck operators, product formulas, independence criteria, fourth-moment phenomena, and quantitative Gaussian approximation; see, for example, \cite{campese2015fourth,ChenChenLiu2025Berry,Chen2017,ChenLiu2014,chen2017fourth,chenliu2019}. Applications include cosmic microwave background polarization \cite{Kamionkowski_1997,MARINUCCI2008585,marinucci_peccati_2011}, stochastic complex Ginzburg--Landau equations \cite{RevModPhys.74.99,CCL24,Hoshino2017}, statistical inference for complex Ornstein--Uhlenbeck processes \cite{AACZ2024,ChenHuWang2017}, and complex Gaussian noise models \cite{Aghaei2008,BARONE2005,Matalkah2008,Reisenfeld2003}.
	
	A complex random variable can be identified with a two-dimensional real vector, but this identification does not by itself capture the structure of a complex Wiener--It\^o integral. If $F=I_{p,q}(f)$ is a complex Wiener--It\^o integral, then the bidegree $(p,q)$, the reversed conjugate kernel $f^\star$, and the complex contractions of $f$ and $f^\star$ all play an essential role. At the level of moments, the natural quantities are not arbitrary cumulants of $(\mathrm{Re}\,F,\mathrm{Im}\,F)$, but the complex moments
	\[
	\E[F^3],\qquad \E[F^2\overline F],
	\]
	together with the fourth-moment defect
	\[
	\Delta_{4,\C}(F)
	:=
	\E|F|^4-2\bigl(\E|F|^2\bigr)^2-|\E[F^2]|^2.
	\]
	This distinction is important for the present paper. The real representation of a complex multiple integral gives access to sharp results from the real theory, but turning those results back into an intrinsic complex statement requires a separate identification of the relevant moments and contractions.
	
	This identification has been a recurring theme in our previous work. The complex-to-real representation of kernels was made explicit in \cite{ccl22}; the intrinsic complex moment scale for optimal smooth Gaussian approximation was identified in \cite{Chen2024Optimal}; and the role of complex contractions in Berry--Ess\'een estimates was further developed in \cite{ChenChenLiu2025Berry}; see also the overview \cite{ccl25}. The present paper combines these ingredients with the recent optimal real-vector theorem of Chen, Chen and Liu \cite{CCL2026}. In particular, the passage from the real theorem to the complex result below is not a formal application of realification. One must first prove that the real rate, built from third-order joint cumulants and coordinate fourth cumulants of $(\mathrm{Re}\,F,\mathrm{Im}\,F)$, is equivalent to a small collection of intrinsic complex moments. One must then identify that same scale through the $(p,q)$-contraction structure of the complex kernel.
	
	Our first contribution concerns density regularity. We give a kernel criterion for absolute continuity of a single complex multiple integral and derive explicit formulas for its density and its derivatives. To control derivatives up to any prescribed order, it is enough to assume finitely many Malliavin--Sobolev norms and one sufficiently high but finite negative moment of the Malliavin determinant. In the asymptotically Gaussian regime, the required negative moments are supplied automatically by the superconvergence theorem of Herry, Malicet and Poly \cite{HMP2024}.

	We now state the main quantitative result. We use the Sobolev norm
	$\|\cdot\|_{W^{m,r}(\R^2)}$ and the Kolmogorov, total variation,
	and $1$-Wasserstein distances $d_{\mathrm{Kol}}$, $d_{\mathrm{TV}}$,
	and $d_{\rm W}$, respectively; see \eqref{eq:sobolev-norm} and \eqref{eq:probability-distances} for their definitions. For a positive definite covariance matrix $\Sigma$, $\phi_\Sigma$ denotes the density of $N_2(0,\Sigma)$, and $\|\cdot\|_{\HS}$ denotes the Hilbert--Schmidt norm; see \eqref{eq:gaussian-density-HS}. For a complex random variable $F$ with finite fourth moment, set
	\begin{equation}\label{eq:intro-MC}
		M_{\C}(F):=
		\max\left\{
		|\E[F^3]|,\ |\E[F^2\overline F]|,\ \Delta_{4,\C}(F)
		\right\}.
	\end{equation}
	For a complex multiple Wiener--It\^o integral, $\Delta_{4,\C}(F)\ge0$; see, for example, \cite[Lemma~4.3]{Chen2024Optimal}. For two non-negative quantities $a_\tau$ and $b_\tau$ indexed by a parameter $\tau$, we write $a_\tau\asymp b_\tau$ if there exist constants $0<c\le C<\infty$ such that $cb_\tau\le a_\tau\le Cb_\tau$ for all sufficiently large $\tau$. We write $\xrightarrow{\mathrm{law}}$ for convergence in distribution.

	\begin{theorem}\label{thm:intro-main}
		Fix $p,q\in \N\cup\{0\}$ with $p+q\ge2$. For $n\ge1$, let $F_n=I_{p,q}(f_n)$, where $f_n\in\mathfrak H^{\odot p}\otimes\mathfrak H^{\odot q}$, and set
		\[
		\widetilde F_n=(\mathrm{Re}\,F_n,\mathrm{Im}\,F_n),
		\qquad
		\Sigma_n=\Cov(\widetilde F_n).
		\]
		Assume that
		\[
		\Sigma_n\to\Sigma,
		\qquad
		\widetilde F_n\xrightarrow{\mathrm{law}}N_2(0,\Sigma),
		\]
		where $\Sigma$ is positive definite. Then, for every $m\in\N\cup\{0\}$ and $r\in[1,\infty]$, $\widetilde F_n$ has a density $p_{F_n}\in W^{m,r}(\R^2)$ for all sufficiently large $n$ and
		\begin{equation}\label{eq:intro-fixed-target}
			\|p_{F_n}-\phi_\Sigma\|_{W^{m,r}(\R^2)}
			\asymp
			d_\ast\bigl(\widetilde F_n,N_2(0,\Sigma)\bigr)
			\asymp
			M_{\C}(F_n)+\|\Sigma_n-\Sigma\|_{\HS},
		\end{equation}
		where $d_\ast\in\{d_{\mathrm{TV}},d_{\mathrm{Kol}},d_{\rm W}\}$. The comparison constants may depend on $p,q,m,r$ and the limiting covariance matrix $\Sigma$, but not on $n$.
	\end{theorem}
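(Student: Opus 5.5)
The proof will reduce the statement to the optimal real-vector theorem of \cite{CCL2026}, applied to the realification of $F_n$. Using the complex-to-real kernel representation of \cite{ccl22}, write $\mathrm{Re}\,F_n=I^W_{p+q}(u_n)$ and $\mathrm{Im}\,F_n=I^W_{p+q}(v_n)$ with $u_n,v_n\in\mathfrak K^{\odot(p+q)}$. Then $\widetilde F_n$ is a vector of elements of the fixed real chaos $\mathcal H_{p+q}(W)$. Its covariance converges to the positive definite $\Sigma$, and it converges in law to $N_2(0,\Sigma)$. In this setting the real theorem should give, for every $m$, every $r\in[1,\infty]$ and every $d_\ast\in\{d_{\mathrm{TV}},d_{\mathrm{Kol}},d_{\rm W}\}$,
\[
\|p_{F_n}-\phi_\Sigma\|_{W^{m,r}(\R^2)}\asymp d_\ast\bigl(\widetilde F_n,N_2(0,\Sigma)\bigr)\asymp M_{\R}(\widetilde F_n)+\|\Sigma_n-\Sigma\|_{\HS},
\]
where $M_{\R}$ is the maximum of the absolute joint third cumulants $\kappa_{(i,j)}$, $i+j=3$, and of the coordinate fourth cumulants $\kappa_4(\mathrm{Re}\,F_n)$ and $\kappa_4(\mathrm{Im}\,F_n)$.

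The existence of the density in $W^{m,r}$ for large $n$ needs a negative moment of the Malliavin determinant. This comes from the superconvergence theorem \cite{HMP2024} together with the density formulas and finite-order Sobolev estimates established earlier in the paper. If the real theorem already contains this, we only check that its hypotheses hold. Two degenerate cases need care: when $p+q$ is odd the third cumulants vanish, and when $p=0$ or $q=0$ the structure is holomorphic. Both are covered by the equivalence below, so no separate treatment should be needed.

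The main step is the equivalence $M_{\R}(\widetilde F_n)\asymp M_{\C}(F_n)$, up to an error that is absorbed by $\|\Sigma_n-\Sigma\|_{\HS}$.

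For the third-order part, we use that the map $(\kappa_{3,0},\kappa_{2,1},\kappa_{1,2},\kappa_{0,3})\mapsto(\E F^3,\E F^2\overline F)$ is a linear bijection of $\R^4\cong\C^2$, since centred moments equal cumulants at order three. Writing $F=X+\ii Y$,
\[
\E F^3=\E X^3-3\E XY^2+\ii\,(3\E X^2Y-\E Y^3),\qquad
\E F^2\overline F=\E X^3+\E XY^2+\ii\,(\E X^2Y+\E Y^3).
\]
This linear system is invertible, so the two third-order quantities are comparable with constants depending only on nothing.

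For the fourth-order part, we expand
\[
\Delta_{4,\C}(F)=\kappa_4(X)+2\kappa_{2,2}(X,Y)+\kappa_4(Y),
\]
which follows from writing $|F|^4=(X^2+Y^2)^2$ and subtracting the Gaussian terms. This gives one direction only after controlling $\kappa_{2,2}$ by the coordinate fourth cumulants. For the other direction, each of $\kappa_4(X)$, $\kappa_4(Y)$ and $\kappa_{2,2}$ must be bounded by $\Delta_{4,\C}(F_n)$. The obstacle is that the coordinate fourth cumulants can individually have either sign a priori. I would try a contraction argument. Via \cite{Chen2024Optimal} and \cite{ChenChenLiu2025Berry}, express $\Delta_{4,\C}(F)$ as a positive combination of norms $\|f\otimes_{i,j}f^\star\|^2$ and $\|f\otimes_{i,j}f\|^2$ over nontrivial index pairs $(i,j)$. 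Likewise, $\kappa_4(X)$ and $\kappa_4(Y)$ are nonnegative combinations of norms of real contractions $u\otimes_r u$ and $v\otimes_r v$, $1\le r\le p+q-1$. Translating $u=(f+f^\star)/2$ and $v=(f-f^\star)/(2\ii)$ through the realification, each real contraction norm is bounded by finitely many complex contraction norms, and conversely. Then the known bound $|\kappa_{2,2}|\lesssim$ (contractions) yields
\[
\kappa_4(X)+\kappa_4(Y)\asymp\Delta_{4,\C}(F)\asymp\sum\|\text{complex contractions}\|^2 .
\]
Here $\Delta_{4,\C}\ge0$ is used, so the comparison is between nonnegative quantities. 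The remaining third moments are then also controlled or dominated as in the theorem's rate. Combining this with the real theorem gives \eqref{eq:intro-fixed-target}, with constants depending on $p,q,m,r,\Sigma$ only.
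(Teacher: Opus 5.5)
Your proposal has a genuine gap, and it sits in your first step.

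\textbf{The real theorem does not give a fixed-target estimate.} Theorem~\ref{thm:CCL-real-optimal} is a covariance-matched result: it assumes $\Cov(\mathbf G_n)=I_d$ and compares with $N_d(0,I_d)$. It says nothing when the target covariance differs from $\Cov(\mathbf G_n)$. So the estimate with $M_{\R}(\widetilde F_n)+\|\Sigma_n-\Sigma\|_{\HS}$ that the real theorem ``should give'' is exactly the new content of the statement, not an available input.

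From the real theorem you can only get the covariance-matched bound $\asymp M_{\C}(F_n)$. To get even that, you must normalize $G_n=A_n\widetilde F_n$ with $A_n:=\Sigma_n^{-1/2}$. You then transport back using uniform stability of $M_2$, of Sobolev norms, and of $d_{\rm TV}$ and $d_{\rm W}$ under $A_n^{\pm1}$ (Lemma~\ref{lem:linear-change}), a step you also omit.

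The fixed-target upper bound then follows from the triangle inequality and
$\|\phi_{\Sigma_n}-\phi_\Sigma\|_{W^{m,r}(\R^2)}+d_{\rm W}(N_2(0,\Sigma_n),N_2(0,\Sigma))\le C\|\Sigma_n-\Sigma\|_{\HS}$. The lower bound does not follow this way. The triangle inequality only gives $\bigl|\,\|p_{F_n}-\phi_{\Sigma_n}\|-\|\phi_{\Sigma_n}-\phi_\Sigma\|\,\bigr|$. That is useless when the non-Gaussian error and the covariance error are of the same order and could cancel.

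\textbf{The missing idea is a separation argument.} The paper expands
$p_{F_n}-\phi_\Sigma=\mathcal L_{\Sigma_n}(\Sigma_n-\Sigma)+T_{A_n}\mathcal E_n+T_{A_n}R_n+S_n$, where:
\begin{itemize}
\item $\mathcal L_{\Sigma_n}(\Sigma_n-\Sigma)=\frac12\sum_{i,j}(\Sigma_n-\Sigma)_{ij}\partial_{ij}\phi_{\Sigma_n}$ lies, after division by $\phi_{\Sigma_n}$, in the second Hermite level;
\item $\mathcal E_n\in\mathcal V$ is the third/fourth-order Edgeworth term of \cite{CCL2026}, with $\|\mathcal E_n\|_\ast\ge cM_2(G_n)$;
\item the remainders are $O(M_2(G_n)^{5/4}+M_2(G_n)^2+\|\Sigma_n-\Sigma\|_{\HS}^2)$.
\end{itemize}
Lemma~\ref{lem:Gaussian-separation} then gives $\|\mathcal L_{\Sigma_n}(\Sigma_n-\Sigma)+T_{A_n}\mathcal E_n\|_\ast\ge c(\|\Sigma_n-\Sigma\|_{\HS}+M_2(G_n))$. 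Its proof uses compactness in $A$ plus the directness of the sum of Hermite levels $2$ and $3,4$. The bound holds for $\|\cdot\|_\ast$ equal to the $W^{m,r}$, $L^1$, Kolmogorov and bounded-Lipschitz norms. The bounded-Lipschitz case yields the Wasserstein lower bound via $d_{\rm W}\ge\|\cdot\|_{\mathrm{BL}}$. The Kolmogorov case must go through the expansion in any event, since $d_{\mathrm{Kol}}$ is not invariant under linear maps.

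\textbf{On your moment comparison.} It is essentially Lemma~\ref{lem:MC-real}, and your third-order linear algebra is correct. A few points need fixing:
\begin{itemize}
\item No error needs to be ``absorbed by $\|\Sigma_n-\Sigma\|_{\HS}$'': the equivalence $M_2(\widetilde F)\asymp M_{\C}(F)$ holds exactly.
\item Your sign worry is moot, because $\kappa_4\ge0$ for any element of a fixed real chaos.
\item The real kernels come from the map of \cite{ccl22}, not literally from $(f\pm f^\star)/2$. So your claimed two-sided comparison of real and complex contraction norms is unproved. The fourth-order comparison is more simply taken from \cite[Lemma~4.3]{Chen2024Optimal}, as the paper does.
\end{itemize}
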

	
	Theorem~\ref{thm:intro-main} has three features that are worth emphasizing. First, the same rate holds for total variation, Kolmogorov and $1$-Wasserstein distances and for every fixed Sobolev order. Thus passing from distributional approximation to density approximation, or to derivatives of the density of any prescribed finite order, causes no loss of rate. If the Gaussian target is chosen to match the covariance $\Sigma_n$, the covariance term disappears and the intrinsic approximation scale is exactly $M_{\C}(F_n)$; see Theorem~\ref{thm:general-optimal}. This result strengthens previous quantitative results for complex multiple Wiener--It\^o integrals. Chen \cite[Proposition~4.2]{Chen2024Optimal} identified $M_{\C}(F_n)$ as the optimal Gaussian approximation scale under a smooth distance for the covariance-matched Gaussian target, while Chen, Chen and Liu \cite[Theorem~3.1]{ChenChenLiu2025Berry} proved that there exist $0<c\le C<\infty$, independent of $n$, such that
	\begin{align*}
		cM_{\C}(F_n)
		\le
		d_{\rm W}\bigl(\widetilde F_n,N_2(0,\Sigma_n)\bigr)
		\le
		C\sqrt{\Delta_{4,\C}(F_n)}.
	\end{align*}
	Theorem~\ref{thm:general-optimal} shows that $M_{\C}(F_n)$ itself is
	the optimal scale for the covariance-matched approximation, simultaneously in Sobolev density norms and in $d_{\rm TV}$, $d_{\rm Kol}$ and $d_{\rm W}$.
	
	Second, the fixed-target estimate in \eqref{eq:intro-fixed-target} is not obtained from the covariance-matched estimate by the triangle inequality alone. The upper bound is straightforward, but the lower bound requires ruling out cancellation between the covariance error and the non-Gaussian error. At the level of the Gaussian expansion, the covariance perturbation is a second-order Hermite contribution, whereas the leading non-Gaussian correction lies in the third and fourth Hermite levels. A finite-dimensional separation argument for these components yields the additive scale
	\[
	M_{\C}(F_n)+\|\Sigma_n-\Sigma\|_{\HS}.
	\]
	
	Third, the quantity $M_{\C}$ is genuinely intrinsic to the complex chaos. The real theorem of \cite{CCL2026}, recalled as Theorem~\ref{thm:CCL-real-optimal}, is expressed through third-order joint cumulants and coordinate fourth cumulants of a real vector. For $(\mathrm{Re}\,F,\mathrm{Im}\,F)$, identifying this real quantity with \eqref{eq:intro-MC} requires nontrivial algebraic relations among complex moments. The further equivalence between $M_{\C}(F)$ and the contraction scale $\mathcal R_{p,q}(f)$ in \eqref{eq:contraction-rate} uses the complex product formula, conjugation, and the bidegree structure of $I_{p,q}(f)$. This is the step that converts a sharp real-vector theorem into a sharp theorem formulated entirely in the natural language of complex Wiener chaos.
	
	The application in Section~\ref{sec:app} shows that the two terms in the fixed-target rate are genuinely different. The complex Ornstein--Uhlenbeck process was introduced by Arat{\'o}, Kolmogorov and Sinai \cite{arato1962evaluation} in connection with the Chandler wobble. For the normalized numerator in the least-squares estimator of a complex fractional Ornstein--Uhlenbeck process, the intrinsic non-Gaussian term and the covariance mismatch can have different orders and different critical regimes. Consequently, the covariance term is not merely a technical correction. It determines the optimal rate to the limiting Gaussian law in part of the parameter range.

	At the density level, negative moments of Malliavin derivatives or determinants have long been used to obtain integration-by-parts formulas and derivative estimates. In the Breuer--Major setting, Hu, Nualart, Tindel and Xu \cite{HNTX2015} established the required negative moments under suitable assumptions and deduced uniform convergence of the densities and of all their derivatives. As emphasized by Herry, Malicet and Poly \cite{HMP2024}, this already constitutes a superconvergence result for properly normalized Hermite sums of stationary Gaussian sequences. Herry, Malicet and Poly subsequently proved a general superconvergence theorem on Wiener chaoses: convergence to a non-degenerate Gaussian vector automatically supplies negative moments of every prescribed finite order for the Malliavin determinant along a sufficiently far tail. These real regularization tools, combined with the intrinsic complex moment and contraction structure described above, lead to the complete optimal fixed-target statement of Theorem~\ref{thm:intro-main}.

	The paper is organized as follows. Section~\ref{sec:Preliminaries} recalls complex multiple Wiener--It\^o integrals, their associated real representation, and the Malliavin calculus used below. Section~\ref{sec:main-result} first develops the absolute-continuity, density-representation and Sobolev-regularity results, then proves the optimal Gaussian approximation bound. Section~\ref{sec:app} applies these results to a complex fractional Ornstein--Uhlenbeck process.

	\section{Preliminaries}\label{sec:Preliminaries}
	
	We introduce the framework of complex isonormal Gaussian processes
	and complex Wiener--It\^o integrals, describe their relation to the
	associated real isonormal Gaussian processes and real Wiener--It\^o
	integrals, and recall standard facts from Malliavin calculus. See \cite{ccl25,chen2017fourth,chenliu2019,ito1951real,ito1952complex,nourdin2012normal,nualart2006malliavin} for more details.
	
	\subsection{Complex isonormal Gaussian processes}

	Let $\mathfrak H$ be a complex separable Hilbert space with conjugation $\overline{\,\cdot\,}$, inner product $\langle\cdot,\cdot\rangle_{\mathfrak H}$ and norm $\|\cdot\|_{\mathfrak H}$. Let $(\Omega,\mathcal F,\mathbb P)$ be a complete probability space. A complex isonormal Gaussian process over $\mathfrak H$ is a centered complex Gaussian family $Z=\{Z(h):h\in\mathfrak H\}$ in $L^2(\Omega;\C)$, complex-linear in $h$, such that 
	\begin{equation}\label{eq:complex-isonormal}
		\E[Z(h)Z(g)]=0,
		\qquad
		\E[Z(h)\overline{Z(g)}]=\langle h,g\rangle_{\mathfrak H},
		\qquad h,g\in\mathfrak H.
	\end{equation}
	Here, we use the same notation $\overline{\,\cdot\,}$ to denote the complex conjugation for any complex element.

	For $p,q\in \N_0:=\N\cup\{0\}$ and $\rho>0$, the complex Hermite polynomials $J_{p,q}(\cdot,\rho)$, also called Hermite--Laguerre--It\^o polynomials, are defined by
	\begin{equation*}
		\exp\bigl\{\lambda\overline z+\overline\lambda z-\rho|\lambda|^2\bigr\}
		=
		\sum_{p,q\ge0}\frac{\overline\lambda^{\,p}\lambda^q}{p!q!}J_{p,q}(z,\rho),
		\qquad z,\lambda\in\C.
	\end{equation*}
	We write $a\wedge b:=\min\{a,b\}$ for $a,b\in\R$. Equivalently,
	\begin{equation*}
		J_{p,q}(z,\rho)
		=
		\sum_{r=0}^{p\wedge q}(-1)^r r!\binom pr\binom qr
		z^{p-r}\overline z^{\,q-r}\rho^r.
	\end{equation*}
	When $\rho=2$, we simply write $J_{p,q}(z)=J_{p,q}(z,2)$. For $p,q\in\N_0$, the $(p,q)$-th complex Wiener--It\^o chaos
	associated with $Z$, denoted by $\mathscr H_{p,q}(Z)$, is the closed
	linear subspace of $L^2(\Omega;\C)$ generated by random variables
	\[
	\left\{
	J_{p,q}(Z(h)):
	h\in\mathfrak H,\ 
	\|h\|_{\mathfrak H}=\sqrt2
	\right\}.
	\]
	Let $\sigma(Z):=\sigma\{Z(h):h\in\mathfrak H\}$
	be the $\sigma$-field generated by $Z$. The complex Wiener--It\^o chaos decomposition implies that the spaces $\mathscr H_{p,q}(Z)$ are mutually orthogonal and
	\begin{equation*}
		L^2
		(\Omega,\sigma(Z),\mathbb P; \C)
		=
		\bigoplus_{p=0}^{\infty}
		\bigoplus_{q=0}^{\infty}
		\mathscr H_{p,q}(Z).
	\end{equation*}

	We use the convention $\mathfrak H^{\otimes0}=\mathfrak H^{\odot0}=\C$ and $h^{\otimes0}=1$ for any $h\in\mathfrak H$. For $r\in\N$, $\mathfrak H^{\otimes r}$ denotes the $r$-th Hilbert tensor product of $\mathfrak H$, while $\mathfrak H^{\odot r}$ denotes its symmetric subspace. Let $S_r$ be the permutation group of $\{1,\ldots,r\}$. We define
	\[
	\operatorname{symm}(h_1\otimes\cdots\otimes h_r)
	:=
	\frac1{r!}
	\sum_{\pi\in S_r}
	h_{\pi(1)}\otimes\cdots\otimes h_{\pi(r)}, \qquad h_1,\ldots,h_r\in \mathfrak H.
	\]
	For $r=0$, the operator $\operatorname{symm}$ is the identity on $\C$. Let
	\[
	\Lambda
	:=
	\left\{
	\mathbf p=(p_k)_{k\ge1}:p_k\in\N_0,\ 
	p_k=0\text{ for all but finitely many }k
	\right\}.
	\]
	For $\mathbf p\in\Lambda$, set
	\[
	|\mathbf p|:=\sum_{k\ge1}p_k,
	\qquad
	\mathbf p!:=\prod_{k\ge1}p_k!.
	\]
	Let $\{\xi_k\}_{k\ge1}$ be a complete orthonormal system in $\mathfrak H$.
	For $\mathbf p,\mathbf q\in\Lambda$, define the complex Fourier--Hermite polynomial
	\[
	\mathbf J_{\mathbf p,\mathbf q}
	:=
	\prod_{k\ge1}
	\frac{1}{\sqrt{2^{p_k+q_k}p_k!q_k!}}
	J_{p_k,q_k}\!\left(\sqrt2\,Z(\xi_k)\right).
	\]
	Then for fixed $p,q\in\N_0$, the family
	\[
	\left\{
	\mathbf J_{\mathbf p,\mathbf q}:
	|\mathbf p|=p,\ |\mathbf q|=q
	\right\}
	\]
	is a complete orthonormal system in $\mathscr H_{p,q}(Z)$. As a consequence, the linear mapping
	\begin{align*}
		&I_{p,q}\left(
		\operatorname{symm}\left(\otimes_{k\ge1}\xi_k^{\otimes p_k}\right)
		\otimes
		\operatorname{symm}\left(\otimes_{k\ge1}\overline{\xi_k}^{\otimes q_k}\right)
		\right)
		:=
		\sqrt{\mathbf p!\,\mathbf q!}\,
		\mathbf J_{\mathbf p,\mathbf q}
	\end{align*}
	extends uniquely to an isometry from
	$\mathfrak H^{\odot p}\otimes\mathfrak H^{\odot q}$, equipped with the norm
	$\sqrt{p!q!}\|\cdot\|_{\mathfrak H^{\otimes(p+q)}}$, onto
	$\mathscr H_{p,q}(Z)$. For
	$f\in\mathfrak H^{\odot p}\otimes\mathfrak H^{\odot q}$,
	$I_{p,q}(f)$ is called the complex $(p,q)$-th Wiener--It\^o integral of $f$ with respect to $Z$.
	
	Given $f\in\mathfrak{H}^{\odot p}\otimes\mathfrak{H}^{\odot q}$ and $g\in\mathfrak{H}^{\odot r}\otimes\mathfrak{H}^{\odot s}$, where $p,q,r,s\in\N_0$, for $i=0,\ldots,p\wedge s$, $j=0,\ldots,q\wedge r$, the $(i,j)$-th contraction of $f$ and $g$ is an element of $\mathfrak{H}^{\otimes (p+r-i-j)}\otimes\mathfrak{H}^{\otimes (q+s-i-j)}$ defined as
	\begin{align*}
		f \otimes_{i, j} g
		&= \sum_{l_{1}, \ldots, l_{i+j}=1}^{\infty}\left\langle f, \xi_{l_{1}} \otimes \cdots \otimes \xi_{l_{i}} \otimes \overline{\xi_{l_{i+1}}} \otimes \cdots \otimes \overline{\xi_{l_{i+j}}}\right\rangle_{\mathfrak{H}^{\otimes (i+j)}}\\&\qquad\qquad\qquad\qquad\otimes\left\langle g, \xi_{l_{i+1}} \otimes \cdots \otimes \xi_{l_{i+j}} \otimes \overline{\xi_{l_{1}}} \otimes \cdots \otimes \overline{\xi_{l_{i}}}\right\rangle_{\mathfrak{H}^{\otimes (i+j)}},
	\end{align*}
	and by convention, $f \otimes_{0,0} g=f \otimes g$ denotes the tensor product of $f$ and $g$. In the preceding display, the brackets denote the partial inner products over the contracted variables. Let $f \widetilde{\otimes_{i, j}} g$ be the symmetrization of $f \otimes_{i, j} g$ in the sense of \cite[Eq. (5.1)]{ito1952complex}. The product formula for complex multiple Wiener--It\^o integrals, as presented in \cite[Theorem~2.1]{Chen2017} or \cite[Theorem~A.1]{Hoshino2017}, states that 
	\begin{equation}\label{eq:complex-product-formula}
		I_{p, q}(f) I_{r, s}(g)=\sum_{i=0}^{p \wedge s} \sum_{j=0}^{q \wedge r}\binom{p}{i} \binom{s}{i}\binom{q}{j}\binom{r}{j} i! j! I_{p+r-i-j, q+s-i-j}\left(f \widetilde{\otimes_{i, j}} g\right).
	\end{equation}
	For $p,q\in\N_0$, define first the conjugate-linear map
	\[
	\star:\mathfrak H^{\otimes p}\otimes\mathfrak H^{\otimes q}
	\rightarrow
	\mathfrak H^{\otimes q}\otimes\mathfrak H^{\otimes p}
	\]
	on elementary tensors by
	\begin{equation*}
		(h_1\otimes\cdots\otimes h_p
		\otimes
		g_1\otimes\cdots\otimes g_q)^\star
		:=
		\overline{g_1}\otimes\cdots\otimes\overline{g_q}
		\otimes
		\overline{h_1}\otimes\cdots\otimes\overline{h_p},
	\end{equation*}
	where $h_1,\ldots,h_p,g_1,\ldots,g_q\in\mathfrak H$, and extend it by conjugate linearity and continuity. This map sends $\mathfrak H^{\odot p}\otimes\mathfrak H^{\odot q}$ into $\mathfrak H^{\odot q}\otimes\mathfrak H^{\odot p}$. For $f\in\mathfrak H^{\odot p}\otimes\mathfrak H^{\odot q}$, $f^\star$ is called its reversed complex conjugate. According to \cite[Eq. (2.15)]{chenliu2019},
	\begin{equation}\label{eq:complex-integral-conjugate}
		\overline{I_{p,q}(f)}
		=
		I_{q,p}(f^\star).
	\end{equation}
	Here and below, $\mathbf 1_A$ denotes the indicator of an event or set $A$. The isometry property of complex multiple Wiener--It\^o integrals is that for $f\in
	\mathfrak H^{\odot p}\otimes\mathfrak H^{\odot q}$ and $g\in
	\mathfrak H^{\odot s}\otimes\mathfrak H^{\odot r}$, where $p,q,s,r\in \N_0$,
	one has
	\begin{equation}\label{eq:complex-isometry}
		\E\left[
		I_{p,q}(f)
		\overline{I_{s,r}(g)}
		\right]
		=
		\mathbf 1_{\{p=s,\ q=r\}}\,
		p!q!\,
		\langle f,g\rangle_{\mathfrak H^{\otimes(p+q)}}.
	\end{equation}

	\subsection{Associated real isonormal Gaussian processes}\label{sec:associated-real-Gaussian-processes}
	
	Let $\mathfrak H_{\R}:=\{h\in\mathfrak H:\overline h=h\}$. For $h,g\in\mathfrak H_{\R}$,
	\[
	\langle h,g\rangle_{\mathfrak H}
	=
	\overline{\langle h,g\rangle_{\mathfrak H}},
	\]
	so the restriction of $\langle\cdot,\cdot\rangle_{\mathfrak H}$, denoted by $\langle \cdot,\cdot\rangle_{\mathfrak H_{\R}}$, is real-valued and makes $\mathfrak H_{\R}$ a real separable Hilbert space. Every $h\in\mathfrak H$ has the unique decomposition
	\begin{equation*}
		h=h_1+\ii h_2,
		\qquad
		h_1=\frac{h+\overline h}{2}\in\mathfrak H_{\R},
		\qquad
		h_2=\frac{h-\overline h}{2\ii}\in\mathfrak H_{\R}.
	\end{equation*}
	Hence $\mathfrak H=\mathfrak H_{\R}\oplus \ii\mathfrak H_{\R}$.
	For $h\in\mathfrak H_{\R}$, define
	\begin{equation*}
		X(h):=\sqrt2\,\mathrm{Re}\, Z(h),
		\qquad
		Y(h):=\sqrt2\,\mathrm{Im}\, Z(h).
	\end{equation*}
	Using \eqref{eq:complex-isonormal}, we obtain, for
	$h,g\in\mathfrak H_{\R}$,
	\begin{equation*}
		\E[X(h)X(g)]=\E[Y(h)Y(g)]=\langle h,g\rangle_{\mathfrak H_{\R}},
		\qquad
		\E[X(h)Y(g)]=0.
	\end{equation*}
	Since the whole family is Gaussian, $X$ and $Y$ are independent real isonormal Gaussian processes over $\mathfrak H_{\R}$. Let $X_{\C}$ and $Y_{\C}$ denote their complex-linear
	extensions to $\mathfrak H$. That is, if
	$h=h_1+\ii h_2\in \mathfrak{H}$, with
	$h_1,h_2\in\mathfrak H_{\R}$, then
	\[
	X_{\C}(h)
	:=
	X(h_1)+\ii X(h_2),
	\qquad
	Y_{\C}(h)
	:=
	Y(h_1)+\ii Y(h_2).
	\]
	The original complex isonormal process $Z$ can be recovered as
	\begin{equation*}
		Z(h)
		=
		\frac{
			X_{\C}(h)
			+
			\ii Y_{\C}(h)
		}{\sqrt2},
		\qquad
		h\in\mathfrak H.
	\end{equation*}

	We now introduce the real Hilbert space on which the real isonormal
	Gaussian process associated with $Z$ will be defined. Set
	\begin{equation*}
		\mathfrak K
		:=
		\mathfrak H_{\R}
		\oplus
		\mathfrak H_{\R}
		=
		\left\{
		(h_1,h_2):
		h_1,h_2\in\mathfrak H_{\R}
		\right\}.
	\end{equation*}
	Then $\mathfrak K$ is a real separable Hilbert space equipped with the real inner product
	\begin{equation*}
		\left\langle
		(h_1,h_2),(g_1,g_2)
		\right\rangle_{\mathfrak K}
		:=
		\langle h_1,g_1\rangle_{\mathfrak H_{\R}}
		+
		\langle h_2,g_2\rangle_{\mathfrak H_{\R}},\qquad (h_1,h_2),(g_1,g_2) \in \mathfrak K.
	\end{equation*}
	Define
	\begin{equation*}
		W(h_1,h_2)
		:=
		X(h_1)+Y(h_2),
		\qquad
		(h_1,h_2)\in\mathfrak K.
	\end{equation*}
	Then $W=\{W(k):k\in\mathfrak K\}$ is a real isonormal Gaussian process
	over $\mathfrak K$. Indeed, for
	$k=(h_1,h_2)$ and $\ell=(g_1,g_2)$ in $\mathfrak K$,
	the independence of $X$ and $Y$ yields
	\begin{align*}
		\E[W(k)W(\ell)]
		&=
		\E[X(h_1)X(g_1)]
		+
		\E[Y(h_2)Y(g_2)]
		\\
		&=
		\langle h_1,g_1\rangle_{\mathfrak H_{\R}}
		+
		\langle h_2,g_2\rangle_{\mathfrak H_{\R}}
		=
		\langle k,\ell\rangle_{\mathfrak K}.
	\end{align*}
	
	For $r\in\N_0$, let $H_r(\cdot)$ be the real Hermite polynomial of order $r$ determined by the equality
	\begin{equation*}
		\exp\left\{tx-\frac{t^2}{2}\right\}
		=
		\sum_{r=0}^{\infty}\frac{t^r}{r!}H_r(x), \qquad x,t\in\R.
	\end{equation*}
	For $r\in\N_0$, the $r$-th real Wiener--It\^o chaos
	associated with $W$, denoted by $\mathcal H_r(W)$, is defined as
	the closed real linear subspace of $L^2(\Omega;\R)$
	generated by random variables
	\[
	\left\{
	H_r(W(k)):
	k\in\mathfrak K,\ 
	\|k\|_{\mathfrak K}=1
	\right\}.
	\]
	Here ``linear'' means linear over $\R$. Equivalently,
	\[
	\mathcal H_r(W)
	=
	\overline{
		\operatorname{span}_{\R}
		\left\{
		H_r(W(k)):
		k\in\mathfrak K,\ 
		\|k\|_{\mathfrak K}=1
		\right\}
	}^{\,L^2(\Omega;\R)}.
	\]
	Let $\sigma(W):=\sigma\{W(k):k\in\mathfrak K\}$. The Wiener--It\^o chaos decomposition implies that the spaces $\mathcal{H}_{r}(W)$ are mutually orthogonal and
	\begin{equation*}
		L^2(\Omega,\sigma(W),\mathbb P;\R)
		=\bigoplus_{r=0}^{\infty} \mathcal{H}_{r}(W).
	\end{equation*}

	For $r\in \N$, let $\mathfrak K^{\otimes r}$ and $\mathfrak K^{\odot r}$ denote the $r$-th Hilbert tensor product and symmetric tensor product of $\mathfrak K$, respectively, and set $\mathfrak K^{\otimes 0}= \mathfrak K^{\odot0}=\R$. For any $r\in \N$, the mapping $I_r^W(k^{\otimes r}):=H_r(W(k))$ for $k\in\mathfrak K$ with $\|k\|_{\mathfrak K}=1$ extends uniquely to a linear isometry between $\mathfrak K^{\odot r}$, equipped with the norm $\sqrt{r!}\|\cdot\|_{\mathfrak K^{\otimes r}}$, and the $r$-th Wiener--It\^o chaos $\mathcal H_r(W)$. For $r=0$, we write $I_0^W(c)=c$ for $c\in\R$. For any $u\in\mathfrak K^{\odot r}$, the random variable $I_r^W(u)$ is called the $r$-th Wiener--It\^o integral of $u$ with respect to $W$. Wiener--It\^o chaoses of different orders are orthogonal, that is, for $u\in\mathfrak K^{\odot r}$ and $v\in\mathfrak K^{\odot s}$, where $r,s\in\N_0$, 
	\begin{equation*}
		\E[I_r^W(u)I_s^W(v)]
		=
		\mathbf 1_{\{r=s\}}r!\langle u,v\rangle_{\mathfrak K^{\otimes r}}.
	\end{equation*}

	The following result is from \cite[Theorem~3.3]{chen2017fourth} and provides a bridge between the complex and real theories; see also \cite[Theorem~2.19]{chenliu2019}.
	
	\begin{proposition}\label{prop:complex-real}
		Let $F=I_{p,q}(f)$, where $p,q\in \N_0$ and $f\in\mathfrak H^{\odot p}\otimes\mathfrak H^{\odot q}$.
		Then there exist real symmetric kernels $u,v\in\mathfrak K^{\odot(p+q)}$ such that
		\begin{equation*}
			\mathrm{Re}\, F=I_{p+q}^W(u),
			\qquad
			\mathrm{Im}\, F=I_{p+q}^W(v).
		\end{equation*}
	\end{proposition}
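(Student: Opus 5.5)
The plan is to show that the complex chaos $\mathscr H_{p,q}(Z)$ sits inside the complexification $\mathcal H_{p+q}(W)\oplus\ii\,\mathcal H_{p+q}(W)$ of the real chaos of order $p+q$. Once this inclusion holds, $F=I_{p,q}(f)$ can be written as $F=A+\ii B$ with $A,B\in\mathcal H_{p+q}(W)$ real-valued. Uniqueness of real and imaginary parts then forces $\mathrm{Re}\,F=A$ and $\mathrm{Im}\,F=B$. The isometry $I^W_{p+q}:\mathfrak K^{\odot(p+q)}\to\mathcal H_{p+q}(W)$ is onto, so $A=I^W_{p+q}(u)$ and $B=I^W_{p+q}(v)$ for unique real symmetric kernels $u,v$. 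It therefore suffices to prove the inclusion on the generators $J_{p,q}(Z(h))$ with $\|h\|_{\mathfrak H}=\sqrt2$. The inclusion then extends to all of $\mathscr H_{p,q}(Z)$ because the complexified real chaos is a closed complex-linear subspace of $L^2(\Omega;\C)$; it is closed since $\mathcal H_{p+q}(W)$ is closed in $L^2(\Omega;\R)$.

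For the generators I would use the two generating functions. Fix $h$ with $\|h\|_{\mathfrak H}^2=2$ and $\lambda\in\C$. The exponent $\lambda\overline{Z(h)}+\overline\lambda Z(h)=2\,\mathrm{Re}\bigl(\overline\lambda Z(h)\bigr)$ is real. Writing $h=h_1+\ii h_2$ and $Z(h)=(X_\C(h)+\ii Y_\C(h))/\sqrt2$, this exponent is an $\R$-linear combination of $X(h_1),X(h_2),Y(h_1),Y(h_2)$. Hence it equals $W(k_\lambda)$ for some $k_\lambda\in\mathfrak K$ depending $\R$-linearly on $(\mathrm{Re}\,\lambda,\mathrm{Im}\,\lambda)$.

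Its norm follows from \eqref{eq:complex-isonormal}: the relation $\E[Z(h)^2]=0$ gives
\[
\|k_\lambda\|_{\mathfrak K}^2=\E\bigl[(\lambda\overline{Z(h)}+\overline\lambda Z(h))^2\bigr]=2|\lambda|^2\|h\|^2_{\mathfrak H}=4|\lambda|^2 .
\]
Therefore
\[
\exp\{\lambda\overline{Z(h)}+\overline\lambda Z(h)-2|\lambda|^2\}
=\exp\{W(k_\lambda)-\tfrac12\|k_\lambda\|^2_{\mathfrak K}\}
=\sum_{r\ge0}\frac1{r!}I^W_r(k_\lambda^{\otimes r}),
\]
with convergence in $L^2$. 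Because $k_\lambda$ is real-linear in $\lambda$, the map $\lambda\mapsto I^W_r(k_\lambda^{\otimes r})$ is a homogeneous polynomial of degree $r$ in $(\lambda,\overline\lambda)$. Its coefficients lie in $\mathcal H_r(W)\oplus\ii\,\mathcal H_r(W)$, as one sees by expanding $k_\lambda^{\otimes r}$ multilinearly and symmetrizing.

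Comparing with the left-hand expansion $\sum_{p,q}\overline\lambda^{\,p}\lambda^qJ_{p,q}(Z(h))/(p!q!)$, I would match the coefficients of $\overline\lambda^{\,p}\lambda^q$. The only contribution comes from total degree $r=p+q$, which gives $J_{p,q}(Z(h))\in\mathcal H_{p+q}(W)\oplus\ii\,\mathcal H_{p+q}(W)$.

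The step needing the most care is the coefficient identification. Both sides are $L^2(\Omega)$-valued power series in $(\lambda,\overline\lambda)$, equivalently in $(\mathrm{Re}\,\lambda,\mathrm{Im}\,\lambda)$, and each converges absolutely in $L^2$ for all $\lambda$ by Gaussian exponential moments. The identity holds pointwise in $\lambda$, so the coefficients agree, for instance by applying $\partial_{\overline\lambda}^p\partial_\lambda^q$ at $\lambda=0$ in $L^2$. Finally, $\sigma(Z)=\sigma(W)$ because $Z$ and $W$ are linear functionals of one another, so no measurability issue arises. The kernels $u$ and $v$ could then be made explicit via the real isometry, but the proposition only requires their existence.
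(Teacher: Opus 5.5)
Your proof is correct. The comparison is really with a citation: the paper proves nothing here and quotes Proposition~\ref{prop:complex-real} from \cite[Theorem~3.3]{chen2017fourth} (see also \cite[Theorem~2.19]{chenliu2019}), and it refers to \cite{ccl22} for explicit formulas expressing $u,v$ in terms of $f$.

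Your argument is a short, basis-free, self-contained substitute for that citation. It rests on one identity, $\lambda\overline{Z(h)}+\overline\lambda Z(h)=W(k_\lambda)$, where $k_\lambda$ is real-linear in $\lambda$ and $\|k_\lambda\|_{\mathfrak K}^2=4|\lambda|^2$. That norm is exactly where $\E[Z(h)^2]=0$ and the normalization $\rho=2$, $\|h\|_{\mathfrak H}=\sqrt2$ enter. The identity turns the complex Hermite generating function into the real Wick exponential. Matching homogeneous degree $p+q$ then places $J_{p,q}(Z(h))$ in $\mathcal H_{p+q}(W)\oplus\ii\,\mathcal H_{p+q}(W)$, and closedness of that complex subspace extends the inclusion to all of $\mathscr H_{p,q}(Z)$.

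The coefficient identification you single out is sound. It can also be made fully concrete:
\begin{itemize}
\item Substitute $t\lambda$ with $t\in\R$. Both sides become $L^2$-valued power series in $t$ converging absolutely for every $t$, because $\|J_{p,q}(Z(h))\|_{L^2}^2=2^{p+q}p!\,q!$ and $\|I^W_r(k_\lambda^{\otimes r})\|_{L^2}^2=r!\,(4|\lambda|^2)^r$.
\item Comparing powers of $t$ identifies the degree-$r$ parts.
\item Restricting to $\lambda=e^{\ii\theta}$ separates the monomials $\overline\lambda^{\,p}\lambda^q$ with $p+q=r$ by their distinct frequencies $q-p$.
\end{itemize}

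The one thing your route does not deliver is a formula for $u$ and $v$, which is what the explicit-kernel approach of \cite{ccl22} provides. The paper, however, only ever uses the membership $\mathrm{Re}\,F,\mathrm{Im}\,F\in\mathcal H_{p+q}(W)$. This is the case in Theorem~\ref{thm:density-existence}, Corollary~\ref{cor:complex-density-finite}, Proposition~\ref{prop:normalized-optimal} and Theorem~\ref{thm:general-optimal}, so your existence statement suffices for everything downstream.
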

	
	The explicit formulas for the kernels of the real and imaginary parts are provided
	in \cite{ccl22}. Thus the real and imaginary parts of a complex integral of bidegree $(p,q)$ always belong to the same real Wiener--It\^o chaos of order $p+q$. This fact will be used repeatedly below.
	
	We also record the multivariate real optimal bound in \cite[Theorem~1.1, Corollary~1.2]{CCL2026}, which will later be transferred to the complex setting. For $k\in\N_0$ and $d\in\N$, set $[d]^k:=\{1,\ldots,d\}^k$, with $[d]^0=\{\emptyset\}$ and $[d]:=[d]^1$. An element $\alpha=(\alpha_1,\ldots,\alpha_k)\in[d]^k$ is called an ordered multi-index of length $|\alpha|:=k$, and we write $\partial_\alpha=\partial_{\alpha_1}\cdots\partial_{\alpha_k}$ and $\partial_\emptyset=\operatorname{Id}$. If $\beta\in[d]^\ell$, then $(\beta,\alpha)$ denotes the ordered multi-index of length $\ell+k$. For $m\in\N_0$ and $r\in[1,\infty]$, let
	$W^{m,r}(\R^d)$ denote the usual Sobolev space consisting of all
	$f\in L^r(\R^d)$ such that $\partial_\alpha f\in L^r(\R^d)$ for every
	$\alpha\in[d]^k$ with $1\le k\le m$,
	where the partial derivatives are understood in the distributional sense.
	We equip $W^{m,r}(\R^d)$ with the norm
	\begin{equation}\label{eq:sobolev-norm}
		\|f\|_{W^{m,r}(\R^d)}:=\sum_{k=0}^m\sum_{\alpha\in[d]^k}\|\partial_\alpha f\|_{L^r(\R^d)}.
	\end{equation}
	For $r=\infty$, the $L^\infty$ norms are understood as essential suprema. For $\R^d$-valued random vectors $X,Y$ (with finite first moments when the $1$-Wasserstein distance is considered), we write the total variation, Kolmogorov and $1$-Wasserstein distances as
	\begin{equation}\label{eq:probability-distances}
		\begin{aligned}
			d_{\mathrm{TV}}(X,Y)&:=\sup_{A\in\mathcal B(\R^d)}|\mathbb P(X\in A)-\mathbb P(Y\in A)|,\\
			d_{\mathrm{Kol}}(X,Y)&:=\sup_{x\in\R^d}|\mathbb P(X_1\le x_1,\ldots,X_d\le x_d)-\mathbb P(Y_1\le x_1,\ldots,Y_d\le x_d)|,\\
			d_{\rm W}(X,Y)&:=\sup_{\operatorname{Lip}(h)\le1}|\E[h(X)]-\E[h(Y)]|,
		\end{aligned}
	\end{equation}
	where $\mathcal B(\R^d)$ denotes the Borel $\sigma$-field on $\R^d$ and the last supremum is over all real-valued Lipschitz functions on $\R^d$. For $\alpha=(\alpha_1,\ldots,\alpha_k)\in[d]^k$ with $k\geq1$ and a centered random vector $\mathbf G=(G_1,\ldots,G_d)$ satisfying $\E[|\mathbf G|^k]<\infty$, the joint cumulant of $\mathbf G$ indexed by $\alpha$ is
	\begin{equation*}
		\kappa_\alpha(\mathbf G)
		:=\left.(-\ii)^k\partial_\alpha\log\varphi_{\mathbf G}(t)\right|_{t=0},
	\end{equation*}
	where $\varphi_{\mathbf G}(t):=\E\big[\exp\bigl(\ii\langle t,\mathbf G \rangle_{\R^d}\bigr)\big],
	\, t\in\R^d$, is the characteristic function of $\mathbf G$. For a centered real random variable $X$ with finite fourth moment, write
	\[
	\kappa_4(X):=\E[X^4]-3\bigl(\E[X^2]\bigr)^2
	\]
	for its fourth cumulant. For $i\in[d]$, one has $\kappa_{(i,i,i,i)}(\mathbf G)=\kappa_{4}(G_i)$. Set
	\begin{equation}\label{eq:Md-real-prelim}
		M_d(\mathbf G)
		:=
		\max\left\{
		\max_{\alpha\in[d]^3}|\kappa_\alpha(\mathbf G)|,
		\max_{1\le i\le d}\kappa_4(G_i)
		\right\}.
	\end{equation}
	For $d=2$, we use the shorthand $M_2(F_1,F_2):=M_2((F_1,F_2))$. In the fixed-chaos setting considered below, each coordinate fourth cumulant in \eqref{eq:Md-real-prelim} is non-negative.

	\begin{theorem}[{\cite[Theorem~1.1 and Corollary~1.2]{CCL2026}}]\label{thm:CCL-real-optimal}
		Fix $d\ge1$ and $k\ge2$. For $n\ge1$, let
		\[
		\mathbf G_n=(G_{n,1},\ldots,G_{n,d}),
		\qquad
		G_{n,i}=I_k^W(g_{n,i}),\qquad g_{n,i}\in\mathfrak K^{\odot k},
		\]
		and assume
		\[
		\Cov(\mathbf G_n)=I_d,
		\qquad
		\mathbf G_n\xrightarrow{\mathrm{law}}N_d(0,I_d).
		\]
		Then, for every $m\in\N_0$ and every $r\in[1,\infty]$, $\mathbf G_n$ has a density $p_{\mathbf G_n}\in W^{m,r}(\R^d)$ for all sufficiently large $n$ and 
		\begin{equation*}
			\|p_{\mathbf G_n}-\phi_d\|_{W^{m,r}(\R^d)}
			\asymp 	d_\ast\bigl(\mathbf G_n,N_d(0,I_d)\bigr)\asymp
			M_d(\mathbf G_n),
		\end{equation*}
		where $d_\ast\in\{d_{\mathrm{TV}},d_{\mathrm{Kol}},d_{\rm W}\}$ and  $\phi_d(x)=(2\pi)^{-d/2}e^{-|x|^2/2}$ is the standard Gaussian density on $\R^d$.
	\end{theorem}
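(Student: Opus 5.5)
The plan is to prove the upper bound once, for the Sobolev norms, and deduce the three distances from it. The lower bound is proved once, for a smooth Schwartz-class test functional, and propagated to every norm.

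\emph{Step 1: regularity.} By the superconvergence theorem of Herry, Malicet and Poly \cite{HMP2024}, the assumption $\mathbf G_n\to N_d(0,I_d)$ with $\Cov(\mathbf G_n)=I_d$ implies the following. For any prescribed $\ell$, the Malliavin matrix $\Gamma_n=(\langle DG_{n,i},DG_{n,j}\rangle_{\mathfrak K})_{i,j}$ satisfies $\sup_{n\ge n_0}\E[(\det\Gamma_n)^{-\ell}]<\infty$. Chaos elements have all Malliavin--Sobolev norms bounded in terms of $\Cov(\mathbf G_n)$, by hypercontractivity. Hence the iterated integration-by-parts formula
\[
\E[\partial_\alpha \varphi(\mathbf G_n)]=\E[\varphi(\mathbf G_n)\,H_\alpha(\mathbf G_n)],
\]
with Malliavin weights $H_\alpha$, has weights bounded in every $L^s$, uniformly in $n\ge n_0$. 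This gives a density $p_{\mathbf G_n}$ with $\sup_{n\ge n_0}\|(1+|x|)^{N}\partial_\alpha p_{\mathbf G_n}\|_{L^\infty}<\infty$ for $|\alpha|\le m+2$ and any $N$. In particular $p_{\mathbf G_n}\in W^{m,r}$ for all $r$.

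\emph{Step 2: expansion to linear order.} For a smooth test function $h$ of polynomial growth, I aim for the second-order Edgeworth-type identity
\[
\E[h(\mathbf G_n)]-\E[h(N)]=\tfrac16\sum_{\alpha\in[d]^3}\kappa_\alpha(\mathbf G_n)\E[\partial_\alpha h(N)]+\tfrac1{24}\sum_{\beta\in[d]^4}\kappa_\beta(\mathbf G_n)\E[\partial_\beta h(N)]+R_n(h),
\]
with $|R_n(h)|\le C\|h\|\,o(M_d(\mathbf G_n))$ in a suitable norm of $h$.

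To derive it, iterate Stein's method with the Gamma operators $\Gamma_j$. The first iteration produces the third cumulants as $\E[\Gamma_2]$. The second produces the fourth cumulants through $\E[\Gamma_3]$. The remainders are products such as $\Var(\Gamma_2)$ and $\|\Gamma_3-\E\Gamma_3\|_{L^2}$, and these are controlled by contractions via the product formula. Each such contraction norm is bounded by the coordinate fourth cumulants, because on a fixed chaos $\kappa_4(G_i)$ dominates all nontrivial contraction norms. The same domination bounds the mixed fourth cumulants $\kappa_\beta$ by $\max_i\kappa_4(G_i)$. The remainders are therefore quadratic, e.g.\ $O(\max_i\kappa_4(G_i))$ times a quantity tending to $0$, which is $o(M_d)$.

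To pass to non-smooth $h$ (indicators, Lipschitz functions, and $\partial_\alpha$ paired against $L^{r'}$ functions), I would mollify at scale $\varepsilon$. Derivatives are moved onto the Malliavin weights of Step 1, so the constants depend only on $\sup_n$ of the negative moments. I would then optimize over $\varepsilon$, using the uniform higher-order regularity from Step 1 to make the mollification error $O(M_d)$.

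\emph{Step 3: upper bounds.} Apply Step 2 with $h=\partial_\alpha\psi$, $|\alpha|\le m$, $\psi\in L^{r'}$ smooth. This gives $|\langle \partial_\alpha(p_{\mathbf G_n}-\phi_d),\psi\rangle|\le C\|\psi\|_{L^{r'}}M_d$, using duality together with the polynomial tail decay of Step 1 to handle $r=1$ and $r=\infty$. From the Sobolev bound, $d_{\mathrm{TV}}\le\frac12\|p-\phi_d\|_{L^1}$ and $d_{\mathrm{Kol}}\le d_{\mathrm{TV}}$. For the Wasserstein distance, use $d_{\mathrm W}\le\int|x||p-\phi_d|$, with the weighted bound obtained in the same way.

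\emph{Step 4: lower bounds.} This is the main obstacle. It suffices to find a fixed Schwartz function $h$, or finitely many, with
\[
|\E h(\mathbf G_n)-\E h(N)|\ge cM_d(\mathbf G_n),
\]
and to check three things. First, $\int h(p-\phi)\le\|h\|_{r'}\|p-\phi\|_{L^r}$ bounds every Sobolev norm from below. Second, $h$ is Lipschitz, which handles $d_{\mathrm W}$. Third, for $d_{\rm TV}$ and $d_{\mathrm{Kol}}$, write $h$ as an integral of indicators of orthants, so that $|\E h(\mathbf G_n)-\E h(N)|\le\|\partial_{1\cdots d}h\|_{L^1}\,d_{\mathrm{Kol}}$.

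By Step 2, the leading term is the linear functional $\Lambda_n(h)=\sum c_\gamma(n)\E[\partial_\gamma h(N)]$. Its coefficient vector $c(n)$ collects the third cumulants and the fourth cumulants, and $|c(n)|\ge c\,M_d$ up to equivalence. The maps $h\mapsto\E[\partial_\gamma h(N)]=\E[h(N)H_\gamma(N)]$ are linearly independent, because Hermite polynomials of degrees $3$ and $4$ are orthogonal. Hence there is a finite family $h_1,\dots,h_K$ of Schwartz functions, approximating Hermite polynomials times Gaussian cutoffs, with $\max_k|\Lambda_n(h_k)|\ge c|c(n)|$.

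The delicate point is that the mixed fourth cumulants could cancel the coordinate ones. I would handle this by retaining the full degree-four vector, which contains the $\kappa_4(G_i)$ as coordinates, and absorbing the remainder since it is $o(M_d)$. This yields the matching lower bound for every norm simultaneously.
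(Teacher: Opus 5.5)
Your skeleton is the same as the one the paper imports from \cite{CCL2026}. It runs: superconvergence \cite{HMP2024} for uniform negative moments, then Malliavin weights, then an expansion $p_{\mathbf G_n}-\phi_d=\mathcal E_n+R_n$ with $\mathcal E_n$ in the span of third and fourth derivatives of $\phi_d$, and finally a lower bound from the fact that every relevant norm is a norm on that finite-dimensional span. Your Step~4 is a correct rendering of that last step: finitely many Schwartz test functions, the orthant representation for $d_{\mathrm{Kol}}$, and keeping the whole fourth-order cumulant vector so that mixed and coordinate cumulants cannot cancel.

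\textbf{The gap is in Step~2.} The remainder you get by stopping the Gamma iteration at $\Gamma_3$ is only $O(M_d)$, not $o(M_d)$, so it cannot be absorbed in Step~4. In scalar notation the last remainder is $\E[f'''(G_n)(\Gamma_3-\E\Gamma_3)]$, which you control through $\|\Gamma_3-\E\Gamma_3\|_{L^2}$. Contraction bounds give $O(\kappa_4)$ for this quantity with no further small factor, and this is sharp. Take $d=1$, $k=2$ and $G_n=\sum_j\lambda_j(X_j^2-1)$ with $2\sum_j\lambda_j^2=1$. Let the $\lambda_j$ consist of one pair $\pm\varepsilon_n\to0$ and many pairs $\pm\delta_n$ with $\delta_n^2\ll\varepsilon_n^4$. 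Then $\kappa_3(G_n)=8\sum_j\lambda_j^3=0$ and $M_1(G_n)=\kappa_4(G_n)=48\sum_j\lambda_j^4\asymp\varepsilon_n^4$. On the other hand, $\Gamma_3-\E\Gamma_3=8\sum_j\lambda_j^4(X_j^2-1)$ has $L^2$ norm $\asymp(\sum_j\lambda_j^8)^{1/2}\asymp\varepsilon_n^4\asymp M_1(G_n)$. So your estimate suffices for the upper bounds of Step~3, but not for the lower bound.

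\textbf{What is missing.} You need one more integration by parts, $\E[f'''(G_n)(\Gamma_3-\E\Gamma_3)]=\E[f^{(4)}(G_n)\Gamma_4]$. This must be combined with superlinear bounds on $\E\Gamma_4=\kappa_5/4!$ and on $\Gamma_4-\E\Gamma_4$, and with their multivariate analogues. For $k=2$, for instance, both are $\lesssim\sum_j|\lambda_j|^5\le(\sum_j\lambda_j^4)^{5/4}$. This yields a superlinear remainder of the type $C(M_d^2+M_d^{5/4})$, which is what \cite[Proposition~5.6]{CCL2026} provides and what the paper uses in the proof of Theorem~\ref{thm:general-optimal}.

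\textbf{Weights.} To handle non-smooth $h$, the derivatives must be moved onto weights that carry the small factor. These are weighted versions $H_\alpha(\mathbf G_n;Y)$ of the weights in Lemma~\ref{lem:finite-IBP}, with $\|H_\alpha(\mathbf G_n;Y)\|_{L^1}\lesssim\|Y\|_{j,s}\lesssim\|Y\|_{L^2}$ for $Y$ in finitely many chaoses (cf.\ the general weight in \cite[Proposition~4.4]{CCL2026}). The unweighted $H_\alpha(\mathbf G_n)$ of your Step~1 would discard the smallness of $\Gamma_j-\E\Gamma_j$. With the weighted version the estimates are uniform in the mollification parameter and you can simply let $\varepsilon\to0$. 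A genuine trade-off between $\varepsilon$ and $M_d$, as your ``optimize over $\varepsilon$'' suggests, would lose the rate.
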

	
	This theorem is the real quantitative input for Section~\ref{sec:optimal-gaussian-approximation}. Proposition~\ref{prop:complex-real} places $(\mathrm{Re}\,F,\mathrm{Im}\,F)$ in a fixed real chaos, while Lemmas~\ref{lem:MC-real} and~\ref{lem:MC-contraction} identify the resulting real rate with intrinsic complex moments and contractions.
	
	\subsection{Malliavin calculus on the associated real Gaussian space}\label{sec:malliavin-calculus}
	
	We first recall the real Malliavin calculus in an abstract form. Let $\mathfrak U$ be a real separable Hilbert space with inner product $\left\langle \cdot,\cdot\right\rangle _\mathfrak{U}$, and $G=\{G(h):h\in\mathfrak U\} $
	be a real isonormal Gaussian process over $\mathfrak U$. That is, $G$ is a centered Gaussian family in $L^2(\Omega; \R)$ such that $\E\left[G(h)G(g)\right]=\left\langle h,g\right\rangle _\mathfrak{U}$ for any $h,g\in \mathfrak{U}$.

	For $k\in\N_0$ and $d\in\N$, let $C_p^k(\R^d)$ and
	$C_b^k(\R^d)$ denote, respectively, the class of $k$-times continuously differentiable functions on $\R^d$ whose partial derivatives of order at most $k$ have at most polynomial growth and the class for which those derivatives are bounded. The spaces $C_p^\infty(\R^d)$ and $C_b^\infty(\R^d)$ have the corresponding meanings at every order, while $C_c^\infty(\R^d)$ denotes the space of infinitely differentiable functions with compact support.
	
	Let $\mathcal S_G$ be the class of smooth cylindrical random variables
	\[
	F=f(G(h_1),\ldots,G(h_d)),
	\]
	where $h_1,\ldots,h_d\in\mathfrak U$ and $f\in C_p^\infty(\R^d)$. Given $F\in \mathcal S_G$, the Malliavin derivative is a $\mathfrak U$-valued random element defined as 
	\begin{equation*}
		DF
		=
		\sum_{j=1}^d\partial_jf(G(h_1),\ldots,G(h_d))h_j.
	\end{equation*}
	The derivative operator $D$ is closable from $L^s(\Omega)$ to $L^s(\Omega;\mathfrak U)$ for every $s\ge 1$. By iteration one obtains the $k$-th derivative $D^kF\in L^s(\Omega;\mathfrak U^{\otimes k})$. For $k\in\N_0$ and $s\ge 1$, we denote the Malliavin--Sobolev space $\mathbb D^{k,s}$ as the completion of $\mathcal S_G$ under
	\begin{equation*}
		\|F\|_{k,s}
		:=
		\left(\sum_{j=0}^k\E\|D^jF\|_{\mathfrak U^{\otimes j}}^s\right)^{1/s}.
	\end{equation*}
	We omit the subscript $k$ when $k=0$, namely $\|F\|_s:=(\E|F|^s)^{1/s}$. We also write $\mathbb D^\infty=\bigcap_{k\ge0}\bigcap_{s>1}\mathbb D^{k,s}$. For a separable Hilbert space $V$, we use
	$\mathbb D^{k,s}(V)$ and $\|\cdot\|_{k,s;V}$ for the corresponding
	$V$-valued Malliavin--Sobolev space and norm. We suppress $V$ from the notation when no confusion can
	arise. The derivative operator $D$ satisfies the chain rule. Specifically, if $\varphi\in C_b^1(\R^d)$ and $\mathbf F=\left(F_1, \ldots, F_d\right)\in (\mathbb{D}^{1,2})^d$, then $\varphi(\mathbf F) \in \mathbb{D}^{1,2}$ and
	\begin{equation*}
		D \varphi(\mathbf F)=\sum_{i=1}^d \frac{\partial \varphi}{\partial x_i}(\mathbf F)DF_i.
	\end{equation*}
	The chain rule still holds if $\mathbf F \in (\mathbb{D}^{\infty})^d$ and $\varphi\in C_p^1(\R^d)$. For a random vector $\mathbf F=(F_1,\ldots,F_d)\in(\mathbb D^{1,2})^d$, its Malliavin matrix is
	\begin{equation*}
		\gamma_{\mathbf F}
		=
		\bigl(\langle DF_i,DF_j\rangle_{\mathfrak U}\bigr)_{1\le i,j\le d}.
	\end{equation*}
	In particular, $\det\gamma_{\mathbf F}>0$ almost surely implies absolute continuity of the law of $\mathbf F$.
	
	The divergence operator $\delta$ is the adjoint of $D$. A random element $u\in L^2(\Omega;\mathfrak U)$ belongs to the domain of $\delta$, denoted by $\Dom(\delta)$, if and only if there exists a constant
	$c_u<\infty$ depending only on $u$, such that 
	\[
	|\E\langle DF,u\rangle_{\mathfrak U}|\le c_u\|F\|_2 ,\qquad  \forall\, F \in \mathbb{D}^{1,2}.
	\]
	In particular, if $u\in\Dom(\delta)$, then $\delta(u)$ is characterized by the following duality relationship
	\begin{equation*}
		\E[F\delta(u)]
		=
		\E\langle DF,u\rangle_{\mathfrak U}.
	\end{equation*}
	For every
	$k\in\N_0$ and $s>1$, the divergence operator $\delta$ is continuous from
	$\mathbb D^{k+1,s}(\mathfrak U)$ into $\mathbb D^{k,s}$; namely, there is $c_{k,s}<\infty$ such that
	\begin{equation*}
		\|\delta(u)\|_{k,s}
		\le c_{k,s}\|u\|_{k+1,s;\mathfrak U}.
	\end{equation*}

	For later use we also record the regularity of fixed Wiener--It\^o chaoses. Let $I_q^G$, $q\in\N_0$, be the real multiple Wiener--It\^o integral associated with $G$. If $q\ge1$, $f\in\mathfrak U^{\odot q}$, then $I_q^G(f)\in\mathbb D^\infty$ and its Malliavin derivative is 
	\begin{equation*}
		D I_q^G(f)=qI_{q-1}^G(f).
	\end{equation*}
	By Meyer's inequalities \cite[Theorem~1.5.1]{nualart2006malliavin},
	and hypercontractivity \cite[Corollary~2.8.14]{nourdin2012normal}, the following standard estimate holds: for fixed $q,k\in \N_0$ and $s\ge1$,
	\begin{equation}\label{eq:fixed-chaos-Sob}
		\|I_q^G(f)\|_{k,s}
		\le c_{q,k,s}\|I_q^G(f)\|_2.
	\end{equation}
	
	From now on all Malliavin operators are taken with respect to the real isonormal process $W$ over $\mathfrak K$ introduced in Subsection~\ref{sec:associated-real-Gaussian-processes}. In particular, if
	\[
	F=I_{p,q}(f)=F_1+\ii F_2, \qquad f\in \mathfrak{H}^{\odot p}\otimes \mathfrak{H}^{\odot q}, \qquad p,q\in \N_0,
	\]
	then the Malliavin matrix associated with $F$ means the Malliavin matrix of the real vector $\widetilde F=(F_1,F_2)$:
	\begin{equation*}
		\gamma_F=\gamma_{\widetilde F}
		=
		\begin{pmatrix}
			\|DF_1\|_{\mathfrak K}^2 & \langle DF_1,DF_2\rangle_{\mathfrak K}\\
			\langle DF_1,DF_2\rangle_{\mathfrak K} & \|DF_2\|_{\mathfrak K}^2
		\end{pmatrix}.
	\end{equation*}
	
	\section{Density regularity and optimal Gaussian approximation}\label{sec:main-result}
	
	\subsection{Absolute continuity, density representations and Sobolev regularity}\label{sec:density-regularity}
	
	We first give the absolute-continuity criterion for a complex multiple Wiener--It\^o integral. This statement requires no negative-moment assumption on the Malliavin determinant.
	
	If $U=I_k^W(u)$ and $V=I_k^W(v)$ are two real multiple Wiener--It\^o integrals of the same order $k\ge1$, Nualart and Tudor \cite[Theorem~3]{NT2017} proved that the law of $(U,V)$ is absolutely continuous with respect to Lebesgue measure on $\R^2$ if and only if its covariance matrix is nonsingular. Equivalently, failure of absolute continuity is characterized by $\det\Cov(U,V)=0$.
	
	\begin{theorem}\label{thm:density-existence}
		Let $F=I_{p,q}(f)=F_1+\ii F_2$, where $p,q\in\N_0$, $p+q\ge1$ and $f\in\mathfrak H^{\odot p}\otimes\mathfrak H^{\odot q}$.
		Then the following hold.
		\begin{enumerate}[(i)]
			\item If $p\ne q$, the law of $(F_1,F_2)$ is absolutely continuous with respect to Lebesgue measure on $\R^2$ if and only if $f\ne0$.
			\item If $p=q$, the law of $(F_1,F_2)$ fails to be absolutely continuous if and only if there exists $\theta\in[0,2\pi)$ such that
			\begin{equation}\label{eq:density-singular-kernel}
				f=e^{\ii\theta}f^\star.
			\end{equation}
		\end{enumerate}
	\end{theorem}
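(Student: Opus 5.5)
By Proposition~\ref{prop:complex-real}, $F_1=I^W_{p+q}(u)$ and $F_2=I^W_{p+q}(v)$ lie in the same real chaos of order $p+q\ge1$. The Nualart--Tudor criterion \cite[Theorem~3]{NT2017} then says that the law of $(F_1,F_2)$ is absolutely continuous if and only if $\det\Cov(F_1,F_2)>0$. So the task is to compute this covariance determinant in complex terms and find when it vanishes.

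The first step is to express the covariance through complex moments. Writing $F_1=(F+\overline F)/2$ and $F_2=(F-\overline F)/(2\ii)$, we have
\[
\E F_1^2=\tfrac12\bigl(\E|F|^2+\mathrm{Re}\,\E[F^2]\bigr),\quad
\E F_2^2=\tfrac12\bigl(\E|F|^2-\mathrm{Re}\,\E[F^2]\bigr),\quad
\E[F_1F_2]=\tfrac12\,\mathrm{Im}\,\E[F^2].
\]
Hence
\[
\det\Cov(F_1,F_2)=\tfrac14\bigl((\E|F|^2)^2-|\E[F^2]|^2\bigr).
\]
By \eqref{eq:complex-integral-conjugate}, $\E[F^2]=\E[F\,\overline{\overline F}]=\E\bigl[I_{p,q}(f)\overline{I_{q,p}(f^\star)}\bigr]$. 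The isometry \eqref{eq:complex-isometry} gives $\E|F|^2=p!q!\|f\|^2$ and
\[
\E[F^2]=\mathbf 1_{\{p=q\}}\,p!q!\,\langle f,f^\star\rangle_{\mathfrak H^{\otimes(p+q)}}.
\]

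Case (i), $p\ne q$: here $\E[F^2]=0$, so $\det\Cov=\tfrac14(p!q!)^2\|f\|^4$. This is positive if and only if $f\ne0$, which proves (i).

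Case (ii), $p=q$: the map $\star$ is an isometry, since it is conjugate-linear and preserves norms on elementary tensors. Therefore $\|f^\star\|=\|f\|$, and
\[
\det\Cov=\tfrac14(p!)^4\bigl(\|f\|^2\|f^\star\|^2-|\langle f,f^\star\rangle|^2\bigr).
\]
This vanishes if and only if equality holds in Cauchy--Schwarz, that is, if and only if $f$ and $f^\star$ are linearly dependent over $\C$.

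It remains to check that linear dependence is equivalent to \eqref{eq:density-singular-kernel}, and this is the one delicate point. If $f=0$, take any $\theta$. Otherwise $f=cf^\star$ for some $c\in\C$, and equality of norms forces $|c|=1$, so $c=e^{\ii\theta}$. The case $f^\star=0$ cannot occur without $f=0$, because $\star$ is an isometry. Conversely, if $f=e^{\ii\theta}f^\star$, the two kernels are dependent, so the determinant is zero and absolute continuity fails. One should also confirm that $\star$ is involutive and an isometry on $\mathfrak H^{\odot p}\otimes\mathfrak H^{\odot p}$, which is immediate on elementary tensors and extends by continuity.

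There is a consistency check, though it is not needed for the proof. Applying $\star$ to $f=e^{\ii\theta}f^\star$ gives $f^\star=e^{-\ii\theta}f$, which is the same relation. Correspondingly, $e^{-\ii\theta/2}F$ is then real-valued, so the law is supported on a line.
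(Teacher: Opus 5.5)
Your proof is correct. It shares the paper's reduction (Proposition~\ref{prop:complex-real} plus the Nualart--Tudor criterion) and its computation in case (i), but it handles case (ii) by a different mechanism.

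The paper argues with the random variables themselves. Singularity of $\Cov(\widetilde F)$ means $aF_1+bF_2=0$ a.s.\ for some $(a,b)\ne0$. This is rewritten as $e^{-\ii\vartheta}F+e^{\ii\vartheta}\overline F=0$, and then, by \eqref{eq:complex-integral-conjugate} and injectivity of $I_{p,p}$, becomes the kernel identity $e^{-\ii\vartheta}f+e^{\ii\vartheta}f^\star=0$. That route never needs $\|f^\star\|=\|f\|$ or the equality case of Cauchy--Schwarz, and it exhibits the line carrying the law directly.

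You instead compute the determinant in closed form, $\det\Cov(\widetilde F)=\tfrac14\bigl((\E|F|^2)^2-|\E[F^2]|^2\bigr)=\tfrac14(p!q!)^2\bigl(\|f\|^4-\mathbf 1_{\{p=q\}}|\langle f,f^\star\rangle|^2\bigr)$. This treats (i) and (ii) uniformly and measures the degeneracy quantitatively as a Cauchy--Schwarz defect. It is the same expression $\tfrac14(v_T^2-|\eta_T|^2)$ that decides positive definiteness of \eqref{eq:CT} in Section~\ref{sec:app}.

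The one ingredient you state somewhat loosely is $\|f^\star\|=\|f\|$. It is most cleanly justified by checking $\langle x^\star,y^\star\rangle=\overline{\langle x,y\rangle}$ on pairs of elementary tensors, since that is what passes directly to linear combinations. Alternatively, read it off from $p!q!\|f^\star\|^2=\E|\overline F|^2=\E|F|^2=p!q!\|f\|^2$ via \eqref{eq:complex-integral-conjugate} and \eqref{eq:complex-isometry}.
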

	
	\begin{proof}
		By Proposition~\ref{prop:complex-real}, $F_1$ and $F_2$ are real multiple Wiener--It\^o integrals of the same order $p+q$ with respect to $W$. By \cite[Theorem~3]{NT2017} recalled above, it is therefore enough to characterize when $\Cov(\widetilde F)$ is singular, namely $\det \Cov(\widetilde F)=0$, where $\widetilde F=(F_1,F_2)$.
		
		Suppose first that $p\ne q$. The isometry property \eqref{eq:complex-isometry} gives
		\begin{align*}
			\E[F^2]&= \E\left[F \overline{\overline F}\right]= \E\left[I_{p,q}(f)  \overline{I_{q,p}(f^\star)}\right] =0,\\
			\E|F|^2&=\E\left[F\overline F\right]= \E\left[I_{p,q}(f)  \overline{I_{p,q}(f)}\right]= p!q!\|f\|_{\mathfrak{H}^{\otimes (p+q)}}^2.
		\end{align*}
		Writing $F=F_1+\ii F_2$, we obtain
		\[
		\E[F_1^2]=\E[F_2^2]=\frac12\E|F|^2 =\frac12 p!q!\|f\|_{\mathfrak{H}^{\otimes (p+q)}}^2,
		\qquad
		\E[F_1F_2]=0.
		\]
		Consequently, $\Cov(\widetilde F)=  p!q!\|f\|_{\mathfrak{H}^{\otimes (p+q)}}^2 I_2/2$ and $\det \Cov(\widetilde F)>0$ if and only if $f\ne0$.
		
		Now let $p=q$. Since $F_1$ and $F_2$ are centered, $\Cov(\widetilde F)$ is singular if and only if there exists $(a,b)\in\R^2\setminus\{0\}$ such that
		\[
		aF_1+bF_2=0\quad\text{a.s.}
		\]
		Using $F_1=(F+\overline F)/2$ and $F_2=(F-\overline F)/(2\ii)$,
		we rewrite this relation as
		\[
		(a-\ii b)F+(a+\ii b)\overline F=0\quad\text{a.s.}
		\]
		Choose $r>0$ and $\vartheta\in[0,2\pi)$ such that $a-\ii b=re^{-\ii\vartheta}$. Dividing by $r$ gives  
		\[
		e^{-\ii\vartheta}F+e^{\ii\vartheta}\overline F=0\quad\text{a.s.}
		\]
		By \eqref{eq:complex-integral-conjugate} and the injectivity of $I_{p,p}$ (which follows from the isometry property \eqref{eq:complex-isometry}),
		\[
		e^{-\ii\vartheta}f+e^{\ii\vartheta}f^\star=0.
		\]
		Equivalently, $f=-e^{2\ii\vartheta}f^\star$.
		As $-e^{2\ii\vartheta}$ ranges over the unit circle, this is precisely \eqref{eq:density-singular-kernel}. 
	\end{proof}
	
	The following corollary was previously obtained in \cite[Corollary 3.10]{ccl22} by means of the explicit kernel representation. Theorem~\ref{thm:density-existence} provides a shorter proof.
	
	\begin{corollary}\label{cor:basis-density}
		Fix $p,q\in\N_0$ and $p+q\ge1$. Let
		\[
		f=
		\operatorname{symm}(\xi_{k_1}\otimes\cdots\otimes \xi_{k_p})
		\otimes
		\operatorname{symm}(\overline{\xi_{j_1}}\otimes\cdots\otimes \overline{\xi_{j_q}}), \qquad 
		\]
		where $\{\xi_k\}_{k\ge1}$ is an orthonormal basis of $\mathfrak H$, $1\le k_1\le\cdots\le k_p$ and $1\le j_1\le\cdots\le j_q$. Then $I_{p,q}(f)$ has a density if and only if either $p\ne q$, or $p=q$ and there exists $1\leq l\leq p$ such that $k_l\neq j_l$.
	\end{corollary}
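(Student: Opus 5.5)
The plan is to apply Theorem~\ref{thm:density-existence} directly. If $p\ne q$, part (i) gives absolute continuity as soon as $f\ne0$. The given $f$ is a symmetrized tensor of basis vectors, hence nonzero, so $I_{p,q}(f)$ has a density. This settles the case $p\ne q$, and it remains to treat $p=q$.

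For $p=q$, part (ii) says a density fails to exist if and only if $f=e^{\ii\theta}f^\star$ for some $\theta$. The first step is to compute $f^\star$ from the definition of $\star$ on elementary tensors. Conjugate linearity, together with the fact that $\operatorname{symm}$ commutes with $\star$ (a permutation of factors within each block is carried to a permutation within the corresponding block), gives
\[
f^\star=\operatorname{symm}\bigl(\overline{\overline{\xi_{j_1}}}\otimes\cdots\bigr)\otimes\operatorname{symm}\bigl(\overline{\xi_{k_1}}\otimes\cdots\otimes\overline{\xi_{k_p}}\bigr),
\]
that is,
\[
f^\star=\operatorname{symm}(\xi_{j_1}\otimes\cdots\otimes\xi_{j_p})\otimes\operatorname{symm}(\overline{\xi_{k_1}}\otimes\cdots\otimes\overline{\xi_{k_p}}).
\]
So $f^\star$ is obtained from $f$ by swapping the multisets $\{k_l\}$ and $\{j_l\}$.

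\emph{Sufficiency.} If $k_l=j_l$ for all $l$, then $f^\star=f$, so \eqref{eq:density-singular-kernel} holds with $\theta=0$ and there is no density.

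\emph{Necessity (the main point).} Suppose some $k_l\ne j_l$. Because both index lists are sorted, this means the multisets $\{k_1,\dots,k_p\}$ and $\{j_1,\dots,j_p\}$ differ. I want to show $f\ne e^{\ii\theta}f^\star$ for every $\theta$. To do this, expand in the orthonormal system of $\mathfrak H^{\otimes 2p}$ given by tensors $\xi_{a_1}\otimes\cdots\otimes\xi_{a_p}\otimes\overline{\xi_{b_1}}\otimes\cdots\otimes\overline{\xi_{b_p}}$; the conjugates $\overline{\xi_b}$ form another orthonormal basis.

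The tensor $f$ is supported on index tuples with $(\{a\},\{b\})=(\{k\},\{j\})$ as multisets, whereas $f^\star$ is supported on tuples with $(\{a\},\{b\})=(\{j\},\{k\})$. These supports are disjoint, since $\{k\}\ne\{j\}$. Hence $\langle f,f^\star\rangle=0$, while $\|f\|=\|f^\star\|>0$.

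If $f=e^{\ii\theta}f^\star$, then taking the inner product with $f^\star$ would give $\langle f,f^\star\rangle=e^{\ii\theta}\|f^\star\|^2\ne0$, a contradiction. So no $\theta$ works, and $I_{p,p}(f)$ has a density by Theorem~\ref{thm:density-existence}(ii).

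The only delicate step is the orthogonality argument. If the basis $\{\xi_k\}$ is not conjugation-invariant, the expansion of $f^\star$ must be carried out with the conjugate basis $\{\overline{\xi_k}\}$ in the second block. Since $\langle f,f^\star\rangle$ is computed blockwise using the same two bases for both $f$ and $f^\star$, the disjoint-support argument still goes through.
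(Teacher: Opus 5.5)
Your proposal is correct and follows the paper's proof. Both apply Theorem~\ref{thm:density-existence}, compute $f^\star$ by swapping the two index multisets, and use the orthogonality of basis tensors with different index multisets to rule out $f=e^{\ii\theta}f^\star$. You spell out that last step via $\langle f,f^\star\rangle=e^{\ii\theta}\|f^\star\|^2\neq 0$, which the paper leaves implicit.

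One cosmetic point: your labels ``Sufficiency'' and ``Necessity'' are swapped relative to the statement. Showing that equal multisets preclude a density is the necessity direction. Both directions are nonetheless fully covered.
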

	
	\begin{proof}
		If $p\ne q$, the kernel is nonzero and Theorem~\ref{thm:density-existence}(i) applies. If $p=q$, the reversed conjugate of $f$ is
		\[
		f^\star=\operatorname{symm}(\xi_{j_1}\otimes\cdots\otimes \xi_{j_q})
		\otimes
		\operatorname{symm}(\overline{\xi_{k_1}}\otimes\cdots\otimes \overline{\xi_{k_p}}).
		\]
		
		Since these basis tensors are orthogonal unless their index multisets coincide, $f$ can be a unimodular multiple of $f^\star$ exactly when the two multisets coincide.
	\end{proof}

	Having characterized absolute continuity, we turn to finite-order regularity of the density. Since the Malliavin operators act on the associated real Gaussian space, we use the two-dimensional, unweighted specialization of the estimates in \cite[Section~4]{CCL2026}. In particular, only one sufficiently high finite negative moment of the Malliavin determinant is required.

	Let $\mathbf F=(F_1,F_2)$ with $F_i\in\mathbb D^{1,2}$ and Malliavin matrix $\gamma_{\mathbf F}$. For $r\in\N_0$ and $P,Q>1$, we say that $\mathbf F$ satisfies $\mathcal A(r,P,Q)$ if
	\begin{equation}\label{eq:A-condition}
		K_+(r,P):=\max_{i=1,2}\|F_i\|_{r,P}<\infty,
		\qquad
		K_-(Q):=\E[(\det\gamma_{\mathbf F})^{-Q}]<\infty.
	\end{equation}
	The second condition implies $\det\gamma_{\mathbf F}>0$ almost surely.  Constants below are allowed to depend on the corresponding values of $K_+(r,P)$ and $K_-(Q)$.

	For $i=1,2$, define the $\mathfrak K$-valued random field
	\begin{equation*}
		u_i(\mathbf F)
		:=
		\sum_{j=1}^2(\gamma_{\mathbf F}^{-1})_{ij}DF_j.
	\end{equation*}
	For $\alpha=(\alpha_1,\ldots,\alpha_\ell)\in[2]^\ell$, we define recursively
	\begin{equation*}
		H_{\emptyset}(\mathbf F):=1,
		\qquad
		H_\alpha(\mathbf F)
		:=
		\delta\left(H_{(\alpha_1,\ldots,\alpha_{\ell-1})}(\mathbf F)u_{\alpha_\ell}(\mathbf F)\right),
	\end{equation*}
	whenever the successive divergences are well defined.
	
	The next lemma shows that these weights $H_\alpha(\mathbf F)$ are well-defined under the finite-order assumptions stated there. This is the specialization of \cite[Proposition~4.4]{CCL2026} to $d=2$ and weight $G=1$.
	
	\begin{lemma}\label{lem:finite-IBP}
		Fix $\ell,r\in\N_0$ and $s>1$. There exist finite exponents $P=P(\ell,r,s)>1$ and $Q=Q(\ell,r,s)>1$
		such that, if $\mathbf F$ satisfies $\mathcal A(r+\ell+1,P,Q)$, then $H_\alpha(\mathbf F)$ is well defined for every $\alpha\in[2]^\ell$ and
		\begin{equation*}
			\|H_\alpha(\mathbf F)\|_{r,s}\le C,
		\end{equation*}
		where $C$ depends on $r,\ell,s$ and on $K_+(r+\ell+1,P)$, $K_-(Q)$ in \eqref{eq:A-condition}. Moreover, for every $\varphi\in C_b^\ell(\R^2)$,
		\begin{equation}\label{eq:IBP-H}
			\E[\partial_\alpha\varphi(\mathbf F)]
			=
			\E[\varphi(\mathbf F)H_\alpha(\mathbf F)].
		\end{equation}
	\end{lemma}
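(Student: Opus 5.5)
We argue by induction on the length $\ell$ of $\alpha$, using the Meyer/H\"older calculus on $\mathbb D^{k,s}$ recalled in Subsection~\ref{sec:malliavin-calculus}. The case $\ell=0$ is trivial: $H_\emptyset=1$, and \eqref{eq:IBP-H} is an identity.

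The first step is to control the inverse Malliavin matrix. Write $\gamma_{\mathbf F}^{-1}=(\det\gamma_{\mathbf F})^{-1}\operatorname{adj}(\gamma_{\mathbf F})$. Each entry of the adjugate is one of $\langle DF_i,DF_j\rangle_{\mathfrak K}$, so by the Leibniz rule and H\"older its $\|\cdot\|_{k,s'}$ norm is bounded by $K_+(k+1,2s')^2$. For $(\det\gamma_{\mathbf F})^{-1}$, we first apply the chain rule to $x\mapsto x^{-1}$, after a standard regularization $(\det\gamma_{\mathbf F}+\varepsilon)^{-1}$ with $\varepsilon\downarrow0$. This gives $D^k(\det\gamma)^{-1}$ as a finite sum of terms $(\det\gamma)^{-1-j}\prod D^{a_i}\det\gamma$ with $j\le k$. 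H\"older's inequality then bounds $\|(\det\gamma)^{-1}\|_{k,s'}$ by a product of a power of $K_-(Q)$ and powers of $K_+(k+1,P)$, provided $Q$ and $P$ are large enough in terms of $k$ and $s'$. Multiplying by $DF_j$ yields
\[
\|u_i(\mathbf F)\|_{k,s';\mathfrak K}\le C\bigl(K_+(k+1,P),K_-(Q)\bigr).
\]

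The second step is the induction on the weights. Suppose $H_\beta$, with $|\beta|=\ell-1$, satisfies $\|H_\beta\|_{r+1,2s}\le C$. By H\"older, $H_\beta u_{\alpha_\ell}\in\mathbb D^{r+1,s}(\mathfrak K)$ with norm at most $\|H_\beta\|_{r+1,2s}\|u_{\alpha_\ell}\|_{r+1,2s}$. The continuity of $\delta$ from $\mathbb D^{r+1,s}(\mathfrak K)$ into $\mathbb D^{r,s}$ then shows that $H_\alpha$ is well defined and bounded in $\mathbb D^{r,s}$. Each inductive step raises the derivative order by one and doubles the integrability exponent, so unwinding the recursion requires $H_\emptyset$ and the $u_i$ in $\mathbb D^{r+\ell,2^\ell s}$. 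This needs $F_i\in\mathbb D^{r+\ell+1,P}$, which is exactly the hypothesis $\mathcal A(r+\ell+1,P,Q)$, with explicit finite choices $P=P(\ell,r,s)$ and $Q=Q(\ell,r,s)$ coming out of the H\"older bookkeeping.

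The third step is the integration-by-parts formula. For $\varphi\in C_b^1$, the chain rule gives $\langle D\varphi(\mathbf F),DF_j\rangle=\sum_k\partial_k\varphi(\mathbf F)(\gamma_{\mathbf F})_{kj}$. Inverting the matrix, we get $\partial_i\varphi(\mathbf F)=\langle D\varphi(\mathbf F),u_i(\mathbf F)\rangle$. Therefore, for any admissible weight $G$,
\[
\E[\partial_i\varphi(\mathbf F)G]=\E\langle D\varphi(\mathbf F),Gu_i\rangle=\E[\varphi(\mathbf F)\delta(Gu_i)],
\]
where the duality step is justified because $Gu_i\in\mathbb D^{1,s}(\mathfrak K)\subset\Dom(\delta)$ for some $s>1$. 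We apply this with $G=H_{(\alpha_1,\ldots,\alpha_{\ell-1})}$ and with $\partial_{(\alpha_1,\ldots,\alpha_{\ell-1})}\varphi$ in place of $\varphi$. Iterating from the innermost derivative, which works because the definition peels off $\alpha_\ell$ last, we obtain \eqref{eq:IBP-H} for $\varphi\in C_b^\ell$.

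The main technical point is the first step. Our hypotheses give only a finite negative moment of $\det\gamma_{\mathbf F}$ rather than $\mathbb D^\infty$ regularity, so we must verify that $(\det\gamma)^{-1}$ lies in $\mathbb D^{k,s'}$ through the regularization argument. This amounts to showing that the derivatives of $(\det\gamma+\varepsilon)^{-1}$ converge in $L^{s'}$ by dominated convergence, using $\det\gamma>0$ a.s. and the H\"older bounds, and then invoking closability of $D$. Once this is in place, the remaining steps are routine exponent tracking.
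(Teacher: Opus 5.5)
Your proposal is correct. The paper gives no proof of its own: it imports the lemma as the case $d=2$, weight $1$ of \cite[Proposition~4.4]{CCL2026}. Your argument is the standard self-contained route to that kind of estimate: a regularized inverse of $\det\gamma_{\mathbf F}$ controlled by one finite negative moment, then Meyer's and H\"older's inequalities tracking the derivative order and integrability exponent through the recursion, then duality for $\delta$.

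There is one indexing slip in your third step. At stage $j$ you should apply the identity with $G=H_{(\alpha_1,\ldots,\alpha_{j-1})}(\mathbf F)$ to $\partial_{\alpha_{j+1}}\cdots\partial_{\alpha_\ell}\varphi$. At the last stage that means applying it to $\varphi$ itself, not to $\partial_{(\alpha_1,\ldots,\alpha_{\ell-1})}\varphi$. This gives $\E[\partial_{\alpha_\ell}\varphi(\mathbf F)H_{(\alpha_1,\ldots,\alpha_{\ell-1})}(\mathbf F)]=\E[\varphi(\mathbf F)H_\alpha(\mathbf F)]$ and closes the chain.
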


	We next derive explicit density formulas. An oriented representation is convenient because the orthant can be chosen according to the evaluation point $x$.
	
	\begin{theorem}\label{thm:finite-density-formula}
		Fix $m\in\N_0$. There exist finite exponents $P=P(m)>1$ and $Q=Q(m)>1$ such that, if $\mathbf F=(F_1,F_2)$ satisfies $\mathcal A(m+3,P,Q)$, then its law has a density $p_{\mathbf F}\in C^m(\R^2)$. Moreover, for every $\alpha\in[2]^k$ with $0\le k\le m$, every $A\subset\{1,2\}$, with $|A|$ denoting the cardinality of $A$, and every $x=(x_1,x_2)\in\R^2$,
		\begin{equation}\label{eq:oriented-density}
			\partial_\alpha p_{\mathbf F}(x)
			=
			(-1)^{k+|A|}\E\bigg[
			\bigg( \prod_{i\notin A}\mathbf 1_{\{F_i>x_i\}}\bigg) 
			\bigg( \prod_{i\in A}\mathbf 1_{\{F_i<x_i\}}\bigg) 
			H_{(1,2,\alpha)}(\mathbf F)
			\bigg].
		\end{equation}
	\end{theorem}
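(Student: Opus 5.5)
The plan is the classical Malliavin density argument, based on Lemma~\ref{lem:finite-IBP} with $\ell=k+2\le m+2$ and $r=0$ (or $r=1$ if some extra regularity is needed). We choose $P(m),Q(m)$ as the maxima of the exponents $P(\ell,r,s)$ and $Q(\ell,r,s)$ from that lemma over $\ell\le m+2$ and some fixed $s>1$, say $s=2$. Then $\mathcal A(m+3,P,Q)$ makes every weight $H_\beta(\mathbf F)$ with $|\beta|\le m+2$ well defined and bounded in $L^s$. The case $k=0$, $A=\emptyset$ is the base. Take $\varphi\in C_c^\infty(\R^2)$ and set
\[
\psi(y)=\int_{-\infty}^{y_1}\int_{-\infty}^{y_2}\varphi(z)\,dz.
\]
Then $\psi\in C_b^\infty$ and $\partial_1\partial_2\psi=\varphi$. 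Applying \eqref{eq:IBP-H} with $\alpha=(1,2)$ and Fubini gives
\[
\E[\varphi(\mathbf F)]=\int_{\R^2}\varphi(x)\,\E\bigl[\mathbf 1_{\{F_1>x_1\}}\mathbf 1_{\{F_2>x_2\}}H_{(1,2)}(\mathbf F)\bigr]\,dx,
\]
so the law has a density given by the bracketed expectation. (If needed, the order of the indices in $\partial_1\partial_2$ is matched to the recursive definition of $H$.) Changing the orientation of $\psi$ in coordinate $i\in A$, i.e.\ integrating from $y_i$ to $+\infty$, gives $\mathbf 1_{\{F_i<x_i\}}$ and a sign $-1$ for each such $i$. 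Alternatively, one can use $\mathbf 1_{\{F_i<x_i\}}=1-\mathbf 1_{\{F_i>x_i\}}$ (valid a.s.\ because the law is absolutely continuous) together with $\E[H_\beta(\mathbf F)\,\cdot\,]$ vanishing on terms independent of the other variable: \eqref{eq:IBP-H} applied to a function constant in $y_i$ gives zero.

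For the derivatives, fix $\alpha\in[2]^k$ and apply \eqref{eq:IBP-H} to the multi-index $(1,2,\alpha)$ with the test function $\psi$ replaced by $\Psi$, where $\partial_1\partial_2\Psi$ is a primitive-type function built so that $\partial_{(1,2,\alpha)}\Psi=\partial_\alpha\varphi$, i.e.\ $\Psi$ is the $\psi$ associated with $\varphi$. This yields
\[
\E[\partial_\alpha\varphi(\mathbf F)]=\int\varphi(x)\,\E\bigl[\mathbf 1_{\{F>x\}}H_{(1,2,\alpha)}(\mathbf F)\bigr]\,dx .
\]
On the other hand, $\E[\partial_\alpha\varphi(\mathbf F)]=(-1)^k\int\varphi\,\partial_\alpha p_{\mathbf F}$ in the distributional sense. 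This identifies the distributional derivative $\partial_\alpha p_{\mathbf F}$ with the stated expectation, including the sign $(-1)^k$. One check here is that the recursive definition of $H$ composes in the correct order, namely that $H_{(\beta,\alpha)}$ corresponds to applying $\partial_\beta$ after $\partial_\alpha$. This follows from the recursion $H_\alpha=\delta(H_{\alpha'}u_{\alpha_\ell})$ and duality, exactly as in Lemma~\ref{lem:finite-IBP}, so the multi-index order in \eqref{eq:oriented-density} matches the lemma's convention.

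The main obstacle is the regularity $p_{\mathbf F}\in C^m$: the representation gives bounded, measurable versions of the derivatives, and we still need continuity. For each $k\le m$, the function
\[
x\mapsto \E\bigl[\mathbf 1_{\{F>x\}}H_{(1,2,\alpha)}(\mathbf F)\bigr]
\]
is continuous by dominated convergence. The weight lies in $L^1$, and $\mathbf 1_{\{F_i>x_i\}}$ is continuous in $x$ except on $\{F_i=x_i\}$, which is a null set since $\mathbf F$ has a density. Boundedness holds because $\|H\|_{L^1}<\infty$. Hence each distributional derivative of order at most $m$ is a continuous function. A distribution whose derivatives of all orders up to $m$ are continuous functions is $C^m$, for instance by integrating along coordinate lines. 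Finally, since $\mathcal A(m+3,P,Q)$ only provides derivatives up to order $m+3$, we must make sure $\ell=m+2$ together with $r=0$ fits the requirement $\mathcal A(r+\ell+1,\cdot,\cdot)$ of Lemma~\ref{lem:finite-IBP}, which it does exactly.
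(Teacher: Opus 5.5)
Your proposal is correct and follows the paper's proof essentially step by step. Both arguments use the primitive $\Phi_h$ with $\partial_1\partial_2\Phi_h=h$ and integrate by parts with the weight $H_{(1,2,\alpha)}$, using $\partial_{(1,2,\alpha)}\Phi_h=\partial_\alpha h$, followed by Fubini; they then take the oriented primitive for general $A$, obtain continuity in $x$ by dominated convergence because $\mathbf F$ has a density, and choose the exponents so that Lemma~\ref{lem:finite-IBP} applies with $r=0$ and $\ell\le m+2$. Your alternative complement argument for $A\neq\varnothing$ also works, provided you approximate the one-variable indicator by smooth bounded functions before invoking \eqref{eq:IBP-H}.
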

	
	\begin{proof}
		Choose $P,Q$ sufficiently large so that Lemma~\ref{lem:finite-IBP} applies to all ordered indices of length at most $m+2$. Let $h\in C_c^\infty(\R^2)$ and define
		\[
		\Phi_h(y_1,y_2)
		:=
		\int_{-\infty}^{y_1}\int_{-\infty}^{y_2}h(x_1,x_2)\,\mathrm dx_2\,\mathrm dx_1.
		\]
		Then $\Phi_h\in C_b^\infty(\R^2)$ and $\partial_1\partial_2\Phi_h=h$. Applying \eqref{eq:IBP-H} and then Fubini's theorem,
		\begin{align*}
			\E[h(\mathbf F)]
			&=\E[\Phi_h(\mathbf F)H_{(1,2)}(\mathbf F)]=\int_{\R^2}h(x)
			\E\left[\mathbf 1_{\{F_1>x_1\}}\mathbf 1_{\{F_2>x_2\}}H_{(1,2)}(\mathbf F)\right]dx.
		\end{align*}
		This proves absolute continuity and \eqref{eq:oriented-density} for $k=0$ and $A=\varnothing$. For $\alpha\in[2]^k$ with $1\le k\le m$, apply the same argument to $\partial_\alpha h$. In the sense of distributions,
		\begin{align*}
			\int_{\R^2}h(x)\partial_\alpha p_{\mathbf F}(x)\,\mathrm dx
			& =(-1)^k\E[\partial_\alpha h(\mathbf F)] ,
		\end{align*}
		and a further application of \eqref{eq:IBP-H} gives \eqref{eq:oriented-density} with $A=\varnothing$. For a general $A\subset\{1,2\}$, replace $\Phi_h$ by the oriented primitive
		\[
		\Phi_{h,A}(y)
		:=
		\int_{\prod_{i\notin A}(-\infty,y_i)\times\prod_{i\in A}(y_i,\infty)}h(x)\,\mathrm dx.
		\]
		Then $\partial_1\partial_2\Phi_{h,A}=(-1)^{|A|}h$, and the same argument gives \eqref{eq:oriented-density}.
		
		Finally, the case $k=0$ shows that $\mathbf F$ has a density, hence $\mathbb P(F_i=x_i)=0$ for every fixed $i$ and $x_i$. The weights $H_{(1,2,\alpha)}(\mathbf F)$ belong to $L^1$ by Lemma~\ref{lem:finite-IBP}. Therefore the right-hand side of \eqref{eq:oriented-density} is continuous in $x$ by the dominated convergence theorem. Thus $p_{\mathbf F}\in C^m(\R^2)$.
	\end{proof}
	
	\begin{corollary}\label{cor:Sob-density}
		Fix $m\in\N_0$ and $r\in[1,\infty]$. There exist finite exponents $P=P(m,r)>1$ and $Q=Q(m,r)>1$ such that, if $\mathbf F$ satisfies $\mathcal A(m+3,P,Q)$, then
		\begin{equation}\label{eq:Sob-density-bound}
			p_{\mathbf F}\in W^{m,r}(\R^2),
			\qquad
			\|p_{\mathbf F}\|_{W^{m,r}(\R^2)}\le C,
		\end{equation}
		where $C$ depends only on $m,r$, and on $K_+(m+3,P)$, $K_-(Q)$ in \eqref{eq:A-condition}.
	\end{corollary}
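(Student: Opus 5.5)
We prove Corollary~\ref{cor:Sob-density} by bounding the pointwise derivatives from Theorem~\ref{thm:finite-density-formula} by tail probabilities times weight norms, and then integrating. Fix $m$ and $r$. We choose $P,Q$ so large that Theorem~\ref{thm:finite-density-formula} applies with $\mathcal A(m+3,P,Q)$. We also require that Lemma~\ref{lem:finite-IBP}, with regularity index $0$ and integrability exponent $s=2$, applies to all ordered indices of length at most $m+2$. Then $p_{\mathbf F}\in C^m(\R^2)$, and for every $\alpha\in[2]^k$ with $k\le m$ we have $\|H_{(1,2,\alpha)}(\mathbf F)\|_2\le C$. Here $C$ depends only on $m$, $K_+(m+3,P)$ and $K_-(Q)$. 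Because $p_{\mathbf F}$ is $C^m$, its classical derivatives coincide with its distributional derivatives. It therefore suffices to bound $\|\partial_\alpha p_{\mathbf F}\|_{L^r}$ for each $\alpha$.

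\emph{The case $r=\infty$.} Apply \eqref{eq:oriented-density} with $A=\varnothing$. Bounding the indicators by $1$ and using Cauchy--Schwarz gives $|\partial_\alpha p_{\mathbf F}(x)|\le \|H_{(1,2,\alpha)}(\mathbf F)\|_1\le C$ uniformly in $x$.

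\emph{The case $r<\infty$.} The key step is to exploit the freedom in choosing $A$ according to the quadrant containing $x$. For a given $x$, take $A=\{i: x_i<0\}$. Then the event in \eqref{eq:oriented-density} forces $|F_i|\ge|x_i|$ for both $i=1,2$. Cauchy--Schwarz then yields
\[
|\partial_\alpha p_{\mathbf F}(x)|\le \|H_{(1,2,\alpha)}(\mathbf F)\|_2\,\mathbb P\bigl(|F_1|\ge|x_1|,\,|F_2|\ge|x_2|\bigr)^{1/2}.
\]
We bound the probability by $\min_i \mathbb P(|F_i|\ge|x_i|)$. Markov's inequality with moment $P$ gives $\mathbb P(|F_i|\ge|x_i|)\le K_+^P|x_i|^{-P}$. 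Taking the geometric mean of the two coordinate bounds, combined with the trivial bound $1$, gives
\[
\mathbb P\bigl(|F_1|\ge|x_1|,\,|F_2|\ge|x_2|\bigr)\le C\,(1+|x_1|)^{-P/2}(1+|x_2|)^{-P/2}.
\]
Hence $|\partial_\alpha p_{\mathbf F}(x)|\le C(1+|x_1|)^{-P/4}(1+|x_2|)^{-P/4}$.

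This function belongs to $L^r(\R^2)$ for every $r\ge1$ provided $P>4$. We enlarge $P$ if necessary; this is allowed, since it only strengthens the hypothesis $\mathcal A$. Summing over the finitely many $\alpha$ with $|\alpha|\le m$ gives \eqref{eq:Sob-density-bound}.

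\emph{Main obstacle.} The only delicate point is the integrability at infinity when $r<\infty$, since the $r=\infty$ bound is immediate. This is handled by the oriented choice of $A$ combined with the moment decay of $F_i$. The exponent bookkeeping in choosing $P$ must guarantee simultaneously that
\begin{itemize}
\item Lemma~\ref{lem:finite-IBP} holds with $s=2$, and
\item the Markov tail decay $P/4$ exceeds $1$.
\end{itemize}
Both are finite requirements, so a finite choice of $P$ and $Q$ suffices.
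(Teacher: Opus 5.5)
Your proof is correct and follows essentially the paper's argument. You use the same $x$-dependent orientation $A=\{i:x_i<0\}$ in \eqref{eq:oriented-density}, which forces $|F_i|\ge|x_i|$ on the relevant event, together with moment bounds from $\mathcal A(m+3,P,Q)$ and Lemma~\ref{lem:finite-IBP}, to get polynomial decay of $\partial_\alpha p_{\mathbf F}$. The only difference is cosmetic: you split via Cauchy--Schwarz and Markov and obtain product decay $(1+|x_1|)^{-P/4}(1+|x_2|)^{-P/4}$, whereas the paper inserts the weight $(1+|\mathbf F|)^a\ge(1+|x|)^a$ inside the expectation and applies H\"older to get radial decay $(1+|x|)^{-a}$ with $ar>2$.
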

	
	\begin{proof}
		For $x\in\R^2$, set
		\[
		A_x:=\{i\in\{1,2\}:x_i<0\}, \qquad 	B_x:=
		\bigcap_{i\notin A_x}\{F_i>x_i\}
		\cap
		\bigcap_{i\in A_x}\{F_i<x_i\}.
		\]
		On $B_x$ one has $|x_i|\le|F_i|$ for $i=1,2$, hence $|x|\le|\mathbf F|$. Fix an integer $a\ge0$. Applying the oriented formula \eqref{eq:oriented-density}, H\"older's inequality, and Lemma~\ref{lem:finite-IBP}, for every $|\alpha|\le m$, after choosing the finite exponents $P,Q$ large enough,
		\begin{align*}
			(1+|x|)^a|\partial_\alpha p_{\mathbf F}(x)|
			&\le
			\E\left[(1+|\mathbf F|)^a|H_{(1,2,\alpha)}(\mathbf F)|\right]\le C.
		\end{align*}
		Consequently,
		\begin{equation*}
			|\partial_\alpha p_{\mathbf F}(x)|\le C(1+|x|)^{-a},
			\qquad |\alpha|\le m.
		\end{equation*}
		If $1\le r<\infty$, choose $a$ so that $ar>2$; then $\partial_\alpha p_{\mathbf F}\in L^r(\R^2)$. For $r=\infty$, the derivatives of order at most $m$ are understood as the continuous versions ensured by Theorem~\ref{thm:finite-density-formula}, and $\partial_\alpha p_{\mathbf F}(x)$ is bounded by taking $a=0$. Summing over $|\alpha|\le m$ proves \eqref{eq:Sob-density-bound}.
	\end{proof}
	
	\begin{corollary}\label{cor:complex-density-finite}
		Let $F=I_{p,q}(f)$ with $p,q\in \N_0$ and $p+q\ge1$. For each fixed $m\in\N_0$ and $r\in[1,\infty]$, there exists a finite $Q=Q(m,r)>1$ such that, if
		\[
		\E[(\det\gamma_F)^{-Q}]<\infty,
		\]
		then the representation \eqref{eq:oriented-density} holds for $\widetilde F=(\mathrm{Re}\,F,\mathrm{Im}\,F)$ for every ordered multi-index $\alpha$ with $|\alpha|\le m$, and $p_F:=p_{\widetilde F}\in W^{m,r}(\R^2)$.
	\end{corollary}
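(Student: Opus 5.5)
The plan is to reduce Corollary~\ref{cor:complex-density-finite} to Theorem~\ref{thm:finite-density-formula} and Corollary~\ref{cor:Sob-density}, applied to the real vector $\widetilde F=(F_1,F_2)$ with $F_1=\mathrm{Re}\,F$ and $F_2=\mathrm{Im}\,F$. Those results require $\mathcal A(m+3,P,Q)$ for some finite $P=P(m,r)$ and $Q=Q(m,r)$. Choose $P$ and $Q$ as large as needed for both results, and take $Q(m,r)$ in the corollary to be this $Q$. It then suffices to check that the positive part $K_+(m+3,P)=\max_i\|F_i\|_{m+3,P}$ is automatically finite. The negative-moment part $K_-(Q)=\E[(\det\gamma_F)^{-Q}]<\infty$ is exactly the hypothesis, since $\gamma_F=\gamma_{\widetilde F}$ by definition.

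For the positive part, invoke Proposition~\ref{prop:complex-real}: there are $u,v\in\mathfrak K^{\odot(p+q)}$ with $F_1=I^W_{p+q}(u)$ and $F_2=I^W_{p+q}(v)$. Since $p+q\ge1$, these are elements of a fixed real chaos of positive order. The estimate \eqref{eq:fixed-chaos-Sob}, with $G=W$ and $\mathfrak U=\mathfrak K$, gives
\[
\|F_i\|_{m+3,P}\le c_{p+q,m+3,P}\|F_i\|_2\le c_{p+q,m+3,P}\,\bigl(p!q!\bigr)^{1/2}\|f\|_{\mathfrak H^{\otimes(p+q)}}<\infty .
\]
The last step uses $\E|F_i|^2\le\E|F|^2$ and the isometry \eqref{eq:complex-isometry}. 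Hence $\widetilde F$ satisfies $\mathcal A(m+3,P,Q)$, and $F_i\in\mathbb D^{1,2}$ as required in the definition of the condition.

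With the condition verified, Theorem~\ref{thm:finite-density-formula} yields a density $p_F\in C^m(\R^2)$ together with the oriented representation \eqref{eq:oriented-density} for all $|\alpha|\le m$. Corollary~\ref{cor:Sob-density} then gives $p_F\in W^{m,r}(\R^2)$, with a norm bound depending on $m$, $r$, $K_+$ and $K_-$. Through $K_+$, that bound depends on $\|f\|$ and $(p,q)$.

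There is no genuine obstacle here. The only point needing care is that a single exponent $Q(m,r)$ must work for both cited results. This is handled by taking the maximum of the two exponents $Q$, which is legitimate because a finite $Q$-th negative moment implies finiteness of all smaller ones. One can also take $Q$ independent of $f$, since the exponents in Lemma~\ref{lem:finite-IBP} depend only on $(\ell,r,s)$.
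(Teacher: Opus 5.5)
Your proposal is correct and is essentially the paper's proof: Proposition~\ref{prop:complex-real} places both coordinates in $\mathcal H_{p+q}(W)$, the fixed-chaos estimate \eqref{eq:fixed-chaos-Sob} makes $K_+(m+3,P)$ finite, the hypothesis supplies $K_-(Q)$, and Theorem~\ref{thm:finite-density-formula} together with Corollary~\ref{cor:Sob-density} give the conclusion. Your additional details are harmless refinements of that same argument: taking the larger of the two exponents, and the explicit bound $\|F_i\|_2\le (p!q!)^{1/2}\|f\|_{\mathfrak H^{\otimes(p+q)}}$.
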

	
	\begin{proof}
		By Proposition~\ref{prop:complex-real}, both components of $\widetilde F$ belong to the fixed real Wiener--It\^o chaos $\mathcal H_{p+q}(W)$. Thus all positive Malliavin--Sobolev norms $K_+(m+3,P)$ required in $\mathcal A(m+3,P,Q)$ are finite by \eqref{eq:fixed-chaos-Sob}. The conclusion now follows from Theorem~\ref{thm:finite-density-formula} and Corollary~\ref{cor:Sob-density}.
	\end{proof}
	For sequences in a fixed Wiener chaos that converge to a non-degenerate Gaussian law, the finite negative moments required in Corollary~\ref{cor:complex-density-finite} hold along a sufficiently far tail by \cite[Theorem~4]{HMP2024}. This is the regularization input behind the real optimal theorem \cite{CCL2026}. 
	
	\subsection{Optimal Gaussian approximation}\label{sec:optimal-gaussian-approximation}

	We now identify the optimal approximation scale. We first compare the real quantity $M_2$ in \eqref{eq:Md-real-prelim} with the intrinsic complex quantity $M_{\C}$ in \eqref{eq:intro-MC}, and then with a scale expressed through complex kernel contractions. These identifications yield the covariance-matched estimate and, after controlling covariance perturbations, the fixed-target result. Proofs of the auxiliary algebraic and stability lemmas are collected in Subsection~\ref{subsec:auxiliary-proofs}.

	\begin{lemma}\label{lem:MC-real}
		Fix $p,q\in\N_0$ with $p+q\ge2$. There exist constants $0<c\le C<\infty$, depending only on $p+q$, such that for every $F=I_{p,q}(f)=F_1+\ii F_2$, where $f\in\mathfrak H^{\odot p}\otimes\mathfrak H^{\odot q}$,
		one has
		\begin{equation}\label{eq:MC-real-equivalence}
			c\,M_{\C}(F)\le M_2(F_1,F_2)\le C\,M_{\C}(F).
		\end{equation}
	\end{lemma}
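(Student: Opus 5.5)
The plan is to compare the two scales cumulant by cumulant, separately for the third-order and the fourth-order parts. Both scales are built from joint cumulants of the same real vector $(F_1,F_2)$, just expressed in different coordinates.

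\textbf{Third order.} Since $F$ is centered, the third moments coincide with the third cumulants. Substituting $F=F_1+\ii F_2$ and $\overline F=F_1-\ii F_2$ gives
\[
\E[F^3]=\kappa_{111}-3\kappa_{122}+\ii\,(3\kappa_{112}-\kappa_{222}),
\qquad
\E[F^2\overline F]=\kappa_{111}+\kappa_{122}+\ii\,(\kappa_{112}+\kappa_{222}),
\]
where $\kappa_{ijk}=\kappa_{(i,j,k)}(F_1,F_2)$. This is an invertible real-linear map from $(\kappa_{111},\kappa_{112},\kappa_{122},\kappa_{222})\in\R^4$ onto $(\mathrm{Re}\,\E F^3,\mathrm{Im}\,\E F^3,\mathrm{Re}\,\E F^2\overline F,\mathrm{Im}\,\E F^2\overline F)$. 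It involves no chaos structure at all. Hence $\max_{\alpha\in[2]^3}|\kappa_\alpha|$ and $\max\{|\E F^3|,|\E F^2\overline F|\}$ are equivalent with absolute constants.

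\textbf{Fourth order: writing $\Delta_{4,\C}$ as a cumulant.} Multilinearity of cumulants and centering give
\[
\Delta_{4,\C}(F)=\kappa(F,\overline F,F,\overline F)=\kappa_{1111}+2\kappa_{1122}+\kappa_{2222}.
\]
Set $X_\theta:=\cos\theta\,F_1+\sin\theta\,F_2$. Then $\kappa_4(X_\theta)$ is a trigonometric polynomial of degree at most $4$ in $\theta$. Averaging the monomials $\cos^4\theta$, $\sin^4\theta$ and $\cos^2\theta\sin^2\theta$ over $[0,2\pi)$ gives
\[
\frac1{2\pi}\int_0^{2\pi}\kappa_4(X_\theta)\,\mathrm d\theta=\frac38\,\Delta_{4,\C}(F).
\]
Each $X_\theta$ lies in the fixed real chaos $\mathcal H_{p+q}(W)$ by Proposition~\ref{prop:complex-real}, so $\kappa_4(X_\theta)\ge0$ for every $\theta$.

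\textbf{Fourth order: lower bound on $\Delta_{4,\C}$.} The trigonometric polynomials of degree at most $4$ form a finite-dimensional space. On the cone of non-negative elements, the average equals the $L^1$ norm, and all norms on this space are equivalent. Hence $\sup_\theta\kappa_4(X_\theta)\le C\,\Delta_{4,\C}(F)$ with an absolute $C$. Taking $\theta=0$ and $\theta=\pi/2$ gives $\max\{\kappa_4(F_1),\kappa_4(F_2)\}\le C\Delta_{4,\C}(F)$. Together with the third-order step, this yields the left inequality $M_2\le C\,M_{\C}$ in \eqref{eq:MC-real-equivalence}.

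\textbf{Fourth order: upper bound on $\Delta_{4,\C}$, the main obstacle.} This is the direction $\Delta_{4,\C}\le C\max\{\kappa_4(F_1),\kappa_4(F_2)\}$. It cannot follow from positivity alone: a non-negative quartic form may vanish on both coordinate axes, as $a^2b^2$ does. So here I would use the chaos structure to control the mixed cumulant $\kappa_{1122}$.
\begin{enumerate}[(a)]
\item Write $F_1=I^W_k(u)$ and $F_2=I^W_k(v)$ with $k=p+q$, using Proposition~\ref{prop:complex-real}.
\item Express $\kappa(F_1,F_1,F_2,F_2)$ through the standard contraction formula for fourth cumulants in a fixed chaos. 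It becomes a finite combination, with constants depending on $k$, of inner products such as $\langle u\otimes_r u,v\otimes_r v\rangle$ and $\langle u\otimes_r v,u\otimes_r v\rangle$ for $1\le r\le k-1$.
\item Bound these by Cauchy--Schwarz together with $\|u\otimes_r v\|^2\le\|u\otimes_{k-r}u\|\,\|v\otimes_{k-r}v\|$.
\item Use the classical equivalence $\kappa_4(I_k^W(u))\asymp\sum_{r=1}^{k-1}\|u\otimes_r u\|^2$.
\end{enumerate}
Together these give $|\kappa_{1122}|\le C_k\sqrt{\kappa_4(F_1)\kappa_4(F_2)}$, hence $\Delta_{4,\C}(F)\le C_k\bigl(\kappa_4(F_1)+\kappa_4(F_2)\bigr)$. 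Combined with the third-order step, this gives $c\,M_{\C}\le M_2$, and all constants depend only on $k=p+q$.
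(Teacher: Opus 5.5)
Your proof is correct, but it takes a different route from the paper. The paper's proof of Lemma~\ref{lem:MC-real} is essentially a citation. It imports $\kappa_4(F_1)+\kappa_4(F_2)\le\Delta_{4,\C}(F)\le C(\kappa_4(F_1)+\kappa_4(F_2))$ from \cite[Lemma~4.3]{Chen2024Optimal}, and the third-order comparison, with explicit constants $1/4$ and $\sqrt2$, from \cite[Lemma~4.4]{Chen2024Optimal}. It then passes from sums to maxima.

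You derive both ingredients yourself instead:
\begin{itemize}
\item The third-order part comes from an explicit invertible real-linear map, which splits into two $2\times2$ blocks of determinant $4$.
\item The fourth-order part comes from $\Delta_{4,\C}(F)=\kappa(F,\overline F,F,\overline F)=\kappa_{1111}+2\kappa_{1122}+\kappa_{2222}$ together with the $\tfrac38$ averaging identity. Both identities are correct.
\end{itemize}
Your averaging step for $\max_i\kappa_4(F_i)\lesssim\Delta_{4,\C}(F)$ uses only non-negativity of fourth cumulants in a fixed chaos and norm equivalence on trigonometric polynomials of degree at most $4$. It even controls $\sup_\theta\kappa_4(X_\theta)$, but the constant is not explicit, whereas the cited lemma gives constant $1$. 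For the reverse direction, your remark that positivity alone cannot suffice is right. Your contraction/Cauchy--Schwarz argument is the same mechanism the paper uses in the proof of Lemma~\ref{lem:linear-change}(1).

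Your step (b) actually gives more than you use. Expand $\E[(F_1F_2)^2]$ via the product formula for $F_1F_2$ and the identity $(2k)!\|u\,\widetilde{\otimes}\,v\|^2=(k!)^2\sum_{s=0}^k\binom ks^2\|u\otimes_s v\|^2$. This gives $\kappa_{1122}=(k!)^2\sum_{s=1}^{k-1}\binom ks^2\|u\otimes_s v\|^2+\sum_{r=1}^{k-1}(r!)^2\binom kr^4(2k-2r)!\,\|u\,\widetilde{\otimes}_r\,v\|^2$. This is a non-negative combination, so you get $0\le\kappa_{1122}\le C_k\sqrt{\kappa_4(F_1)\kappa_4(F_2)}$ directly. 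It recovers the paper's sharp inequality $\kappa_4(F_1)+\kappa_4(F_2)\le\Delta_{4,\C}(F)$ and makes the averaging step unnecessary.

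In short, the paper's route is shorter and has explicit constants, while yours is self-contained and shows exactly where the chaos structure enters. One small slip: $M_2\le C\,M_{\C}$ is the right-hand inequality in \eqref{eq:MC-real-equivalence}, not the left-hand one.
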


	The optimal rate admits a useful formulation directly in terms of the complex kernel. This is especially convenient in applications, where contractions can often be estimated without computing moments explicitly. Fix $p,q\in\N_0$ with $l:=p+q\ge2$. If $p=q$, necessarily $p=q\ge1$, and we define
	\[
	\beta_{p,i}:=i!(p-i)!(p!)^2\binom pi^4,\qquad 0\le i\le p,
	\]
	and
	\[
	\mathcal T_1(f):=\left|\sum_{i=0}^p\beta_{p,i}\left\langle f\widetilde{\otimes_{i,p-i}}f,f^\star \right\rangle_{\mathfrak H^{\otimes 2p}}\right|,
	\qquad
	\mathcal T_2(f):=\left|\sum_{i=0}^p\beta_{p,i}\left\langle f\widetilde{\otimes_{i,p-i}}f,f\right\rangle_{\mathfrak H^{\otimes 2p}}\right|.
	\]
	If $p\ne q$, set $\mathcal T_1(f)=\mathcal T_2(f)=0$. Define the contraction rate
	\begin{equation}\label{eq:contraction-rate}
		\mathcal R_{p,q}(f):=\max\Bigg\{\mathcal T_1(f),\ \mathcal T_2(f),\ \sum_{\substack{0\le i\le p,\ 0\le j\le q\\0<i+j<l}}\|f\otimes_{i,j} f^\star \|_{\mathfrak H^{\otimes 2(l-i-j)}}^2\Bigg\}.
	\end{equation}

	\begin{lemma}\label{lem:MC-contraction}
		Fix $p,q\in\N_0$ with $p+q\ge2$ and let $F=I_{p,q}(f)$, where $f\in\mathfrak H^{\odot p}\otimes\mathfrak H^{\odot q}$. There exist constants $0<c\le C<\infty$, depending only on $p$ and $q$, such that
		\begin{equation}\label{eq:MC-contraction-equivalence}
			c\,\mathcal R_{p,q}(f)\le M_{\C}(F)\le C\,\mathcal R_{p,q}(f).
		\end{equation}
	\end{lemma}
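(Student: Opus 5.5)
We prove the two inequalities of \eqref{eq:MC-contraction-equivalence} separately for each of the three components of $M_{\C}(F)$, using the product formula \eqref{eq:complex-product-formula}, the conjugation rule \eqref{eq:complex-integral-conjugate} and the isometry \eqref{eq:complex-isometry}.

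\emph{Third moments.} Write $F^2=\sum_{i\le p\wedge q,\,j\le q\wedge p} c_{i,j}\,I_{2p-i-j,2q-i-j}(f\widetilde{\otimes_{i,j}}f)$ with the explicit binomial constants from \eqref{eq:complex-product-formula}. Then
\[
\E[F^3]=\E\bigl[F^2\,\overline{\overline F}\bigr]=\E\bigl[F^2\,\overline{I_{q,p}(f^\star)}\bigr],
\qquad
\E[F^2\overline F]=\E\bigl[F^2\,\overline{I_{p,q}(f)}\bigr].
\]
By \eqref{eq:complex-isometry}, only the terms of bidegree $(q,p)$ (respectively $(p,q)$) survive. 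The bidegree $(2p-i-j,2q-i-j)=(q,p)$ forces $p=q$ and $i+j=p$. Likewise, $(2p-i-j,2q-i-j)=(p,q)$ forces $i+j=p=q$. Hence both third moments vanish when $p\ne q$. When $p=q$, they equal exactly $\sum_i\beta_{p,i}\langle f\widetilde{\otimes_{i,p-i}}f,f^\star\rangle$ and $\sum_i\beta_{p,i}\langle f\widetilde{\otimes_{i,p-i}}f,f\rangle$, up to complex conjugation. The constants $\beta_{p,i}$ are designed to match $\binom pi^2\binom p{p-i}^2 i!(p-i)!\cdot (p!)^2$. So $|\E F^3|=\mathcal T_1(f)$ and $|\E[F^2\overline F]|=\mathcal T_2(f)$, possibly after swapping their roles. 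This part is a direct computation, and it gives equality rather than mere equivalence.

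\emph{Fourth-moment defect.} Expand $\E|F|^4=\E[F\overline F\cdot\overline{F\overline F}]$ using the product formula for $F\overline F=I_{p,q}(f)I_{q,p}(f^\star)$, which gives terms $I_{l-i-j,l-i-j}(f\widetilde{\otimes_{i,j}}f^\star)$ with $i\le p$, $j\le q$. The term with $i+j=l$ is $\E|F|^2$ and produces $(\E|F|^2)^2$. The remaining terms are orthogonal across different values of $i+j$, but within a fixed level they mix. Following the standard real fourth-moment argument (and \cite[Lemma~4.3]{Chen2024Optimal}), I would then rewrite $\E|F|^4$ a second way via $F^2\cdot\overline{F^2}$. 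Combining the two expansions expresses $\Delta_{4,\C}(F)$, with the term $|\E F^2|^2$ coming from the $(0,0)$-level of $F^2$, as a positive combination of $\|f\otimes_{i,j}f^\star\|^2$ over $0<i+j<l$ plus $\|f\widetilde{\otimes}_{i,j}f^\star\|^2$-type terms with nonnegative coefficients.

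This identity gives the upper bound $\Delta_{4,\C}\le C\sum_{0<i+j<l}\|f\otimes_{i,j}f^\star\|^2$, using $\|\widetilde g\|\le\|g\|$. It also gives the lower bound $\Delta_{4,\C}\ge c\sum\|f\otimes_{i,j}f^\star\|^2$, since the unsymmetrized contraction norms appear with strictly positive coefficients. This mirrors the real proof in Nourdin--Peccati, where $\kappa_4(I_k(g))=\sum_r c_r(\|g\otimes_r g\|^2+\text{nonneg.})$.

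\emph{Main obstacle.} The hard step is establishing the exact positive-coefficient identity for $\Delta_{4,\C}$ in the complex setting with general $(p,q)$. The delicate points are the bookkeeping of the two symmetrizations, the need to relate $\|f\otimes_{i,j}f\|$-type terms (arising from $F^2$) to $\|f\otimes_{p-i,q-j}f^\star\|$, and the correct handling of the $|\E F^2|^2$ correction when $p=q$. If an exact identity proves unwieldy, a fallback is Lemma~\ref{lem:MC-real} together with the real kernels $u,v$ of Proposition~\ref{prop:complex-real}. One would use $\kappa_4(F_1)+\kappa_4(F_2)\asymp\sum_r(\|u\otimes_r u\|^2+\|v\otimes_r v\|^2)$, and then express $u\otimes_r u\pm v\otimes_r v$ and $u\otimes_r v$ through $f\otimes_{i,j}f^\star$ and $f\otimes_{i,j}f$ via the explicit complex-to-real kernel formulas of \cite{ccl22}, comparing norms level by level. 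Finally, one takes the maximum over the three components; the constants depend only on $p,q$.
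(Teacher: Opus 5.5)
Your third-moment computation is correct and complete. Pairing $F^2$ with $\overline{I_{q,p}(f^\star)}$ and with $\overline{I_{p,q}(f)}$ via \eqref{eq:complex-isometry} forces $p=q$ and $i+j=p$, and the surviving coefficients are exactly $\beta_{p,i}$. Hence $\mathcal T_1(f)=|\E[F^3]|$ and $\mathcal T_2(f)=|\E[F^2\overline F]|$, with no swapping or conjugation needed. This is the content of \cite[Lemma~3.2]{ChenChenLiu2025Berry}, which the paper simply quotes. Your argument also handles every $p\neq q$, including $pq=0$, in one stroke. For the fourth-moment defect the paper derives nothing new. It imports $\Delta_{4,\C}(F)\asymp\sum_{0<i+j<l}\|f\otimes_{i,j}f^\star\|^2$ from \cite[Proposition~3.2]{ChenChenLiu2025Berry} when $p,q\ge1$, and computes the cases $pq=0$ by hand.

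That fourth-moment step is where your proposal has a gap. You assert a positive-coefficient identity but do not establish it, and the upper-bound step as you describe it is incomplete. The extra nonnegative terms are not $\|f\widetilde{\otimes_{i,j}}f^\star\|^2$ but level sums of contractions of $f$ with itself. So $\|\widetilde g\|\le\|g\|$ alone does not bring you back to $\|f\otimes_{a,b}f^\star\|$. You flag this passage but do not supply it. Also, the $F\overline F$ expansion is not needed: by itself it produces same-level cross terms between different $(i,j)$ that are not manifestly nonnegative.

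The missing piece is short. Use only $\E|F|^4=\E|F^2|^2$, with $c_{i,j}:=\binom{p}{i}\binom{q}{i}\binom{q}{j}\binom{p}{j}i!\,j!$:
\[
\E|F|^4=\sum_{k=0}^{2(p\wedge q)}(2p-k)!\,(2q-k)!\,\Bigl\|\sum_{i+j=k}c_{i,j}\,f\widetilde{\otimes_{i,j}}f\Bigr\|^2 .
\]
Expand the symmetrization in the $k=0$ term. A pair of permutations keeping exactly $a$ first-group and $b$ second-group slots inside the first copy of $f$ contributes $\|f\otimes_{a,b}f^\star\|^2$, and there are $(p!q!)^2\binom{p}{a}^2\binom{q}{b}^2$ such pairs. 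The terms $(a,b)=(0,0)$ and $(p,q)$ give $2(\E|F|^2)^2$. When $p=q$, the bidegree-$(0,0)$ term $k=2p$ equals $|\E[F^2]|^2$. Hence
\[
\Delta_{4,\C}(F)=(p!q!)^2\sum_{0<a+b<l}\binom{p}{a}^2\binom{q}{b}^2\|f\otimes_{a,b}f^\star\|^2+\sum_{\substack{0<k<l\\ k\le 2(p\wedge q)}}(2p-k)!\,(2q-k)!\,\Bigl\|\sum_{i+j=k}c_{i,j}\,f\widetilde{\otimes_{i,j}}f\Bigr\|^2,
\]
which gives the lower bound at once. For the upper bound, use the triangle inequality on the level sums. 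Then regroup the four factors in $\|f\otimes_{i,j}f\|^2$ and apply Cauchy--Schwarz:
\[
\|f\otimes_{i,j}f\|^2\le\|f\otimes_{p-i,q-j}f^\star\|\,\|f\otimes_{p-j,q-i}f^\star\|.
\]
Both contractions on the right have level $l-k\in(0,l)$. With this, your route is a self-contained proof, uniform in $(p,q)$. For $pq=0$ the second sum is empty and you recover the paper's boundary formula, so the fallback through real kernels is unnecessary.
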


	For a positive definite $2\times2$ matrix $\Sigma$ and a real $2\times2$ matrix $M=(m_{ij})$, we use
	\begin{equation}\label{eq:gaussian-density-HS}
		\phi_\Sigma(x):=\frac{1}{2\pi\sqrt{\det\Sigma}}\exp\left\{-\frac12x^\top\Sigma^{-1}x\right\},
		\qquad
		\|M\|_{\HS}:=\left(\sum_{i,j=1}^2m_{ij}^2\right)^{1/2}.
	\end{equation}
	We write $\phi=\phi_{I_2}$. We begin with the normalized case, where Theorem~\ref{thm:CCL-real-optimal} applies directly.
	
	\begin{proposition}\label{prop:normalized-optimal}
		Fix $p,q\in\N_0$ with $p+q\ge2$. For $n\geq1$, let $F_n=I_{p,q}(f_n)$, where $f_n\in\mathfrak H^{\odot p}\otimes\mathfrak H^{\odot q}$. Write $\widetilde F_n=(\mathrm{Re}\,F_n,\mathrm{Im}\,F_n)$. Assume 
		\begin{equation}\label{eq:normalized-assumptions}
			\Cov(\widetilde F_n)=I_2,
			\qquad
			\widetilde F_n\xrightarrow{\mathrm{law}}N_2(0,I_2), \qquad n\to\infty.
		\end{equation}
		Then, for every $m\in\N_0$ and every $r\in[1,\infty]$, there exists an integer $n_0$ such that, for every $n\ge n_0$, $\widetilde F_n$ has a density $p_{F_n}\in W^{m,r}(\R^2)$ and 
		\begin{equation}\label{eq:normalized-optimal-bound}
			\|p_{F_n}-\phi\|_{W^{m,r}(\R^2)}
			\asymp  d_\ast\bigl(\widetilde F_n,N_2(0,I_2)\bigr)\asymp 
			M_{\C}(F_n),
		\end{equation}
		where $d_\ast\in\{d_{\mathrm{TV}},d_{\mathrm{Kol}},d_{\rm W}\}$.
	\end{proposition}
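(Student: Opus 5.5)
The plan is to reduce Proposition~\ref{prop:normalized-optimal} directly to the real optimal theorem, Theorem~\ref{thm:CCL-real-optimal}, with $d=2$ and $k=p+q\ge2$, and then to convert the real rate into the complex one via Lemma~\ref{lem:MC-real}.

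First, by Proposition~\ref{prop:complex-real}, for each $n$ there are real symmetric kernels $u_n,v_n\in\mathfrak K^{\odot(p+q)}$ with $\mathrm{Re}\,F_n=I^W_{p+q}(u_n)$ and $\mathrm{Im}\,F_n=I^W_{p+q}(v_n)$. Hence $\mathbf G_n:=\widetilde F_n$ is a two-dimensional vector whose coordinates lie in the fixed real chaos $\mathcal H_{p+q}(W)$, and $k=p+q\ge2$. The hypotheses \eqref{eq:normalized-assumptions} are exactly $\Cov(\mathbf G_n)=I_2$ and $\mathbf G_n\xrightarrow{\mathrm{law}}N_2(0,I_2)$. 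Theorem~\ref{thm:CCL-real-optimal} then yields, for each fixed $m\in\N_0$ and $r\in[1,\infty]$, an index $n_0$ beyond which $\mathbf G_n$ has a density $p_{F_n}=p_{\mathbf G_n}\in W^{m,r}(\R^2)$. For $d_\ast\in\{d_{\mathrm{TV}},d_{\mathrm{Kol}},d_{\rm W}\}$ it also gives
\[
\|p_{F_n}-\phi\|_{W^{m,r}(\R^2)}\asymp d_\ast\bigl(\widetilde F_n,N_2(0,I_2)\bigr)\asymp M_2(\mathrm{Re}\,F_n,\mathrm{Im}\,F_n),
\]
with $\phi=\phi_2=\phi_{I_2}$. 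The existence of the density for large $n$, via negative moments of the Malliavin determinant from \cite{HMP2024}, is already built into that theorem, so no separate non-degeneracy argument is needed. Alternatively, one could combine it with Corollary~\ref{cor:complex-density-finite}.

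Second, apply Lemma~\ref{lem:MC-real} to each $F_n$. Since its constants depend only on $p+q$ and not on $f_n$, it gives $c\,M_{\C}(F_n)\le M_2(\mathrm{Re}\,F_n,\mathrm{Im}\,F_n)\le C\,M_{\C}(F_n)$ uniformly in $n$. Chaining the two-sided comparisons gives \eqref{eq:normalized-optimal-bound}, with constants depending only on $p,q,m,r$.

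There is essentially no obstacle at this level; the proposition is a transfer statement. The one substantive ingredient is Lemma~\ref{lem:MC-real}, whose proof is deferred to Subsection~\ref{subsec:auxiliary-proofs}. It requires expressing the real third cumulants $\kappa_{(i,j,k)}$ as real and imaginary parts of linear combinations of $\E[F^3]$ and $\E[F^2\overline F]$. It also requires showing that the coordinate fourth cumulants $\kappa_4(F_1),\kappa_4(F_2)$, both nonnegative in a fixed chaos, are controlled above and below by $\Delta_{4,\C}(F)$; this relies on the fact that $\Delta_{4,\C}$ is a combination of the full set of fourth cumulants, and that mixed fourth cumulants are dominated by the diagonal ones in a fixed chaos. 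Taking that lemma as given, the proof is just the two steps above.
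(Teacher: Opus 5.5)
Your proposal is correct and matches the paper's proof: realify via Proposition~\ref{prop:complex-real}, apply Theorem~\ref{thm:CCL-real-optimal} with $d=2$ and chaos order $p+q$, and then convert $M_2(\widetilde F_n)$ into $M_{\C}(F_n)$ using Lemma~\ref{lem:MC-real}. Your sketch of how Lemma~\ref{lem:MC-real} is proved (via domination of mixed fourth cumulants) is not the paper's route, which cites the third- and fourth-moment comparisons in \cite[Lemmas~4.3--4.4]{Chen2024Optimal}, but this does not matter here since you take the lemma as given.
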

	
	\begin{proof}
		By Proposition~\ref{prop:complex-real}, for every $n$ there exist $u_n,v_n\in\mathfrak K^{\odot(p+q)}$ such that
		\[
		\widetilde F_n
		=
		\bigl(I_{p+q}^W(u_n),I_{p+q}^W(v_n)\bigr).
		\]
		Therefore the structural assumption of Theorem~\ref{thm:CCL-real-optimal} is satisfied with $d=2$ and real chaos order $p+q$, while the covariance and convergence assumptions of that theorem are exactly \eqref{eq:normalized-assumptions}. Consequently, for all sufficiently large $n$, $\widetilde F_n$ has a density $p_{F_n}\in W^{m,r}(\R^2)$ and
		\[
		\|p_{F_n}-\phi\|_{W^{m,r}(\R^2)}\asymp d_\ast\bigl(\widetilde F_n,N_2(0,I_2)\bigr)\asymp M_2(\widetilde F_n).
		\]
		Lemma~\ref{lem:MC-real} gives $M_2(\widetilde F_n)\asymp M_{\C}(F_n)$ and proves \eqref{eq:normalized-optimal-bound}.
	\end{proof}
	
	To pass from identity covariance to a general positive definite covariance, we use two stability estimates. We state them in dimension two, which is the only case needed here.
	
	For a function $g$ on $\R^2$ and a matrix $A\in\R^{2\times2}$, define
	\[
	(T_Ag)(x):=|\det A|\,g(Ax),
	\qquad x\in\R^2.
	\]
	
	\begin{lemma}\label{lem:linear-change}
		Fix $K\ge1$. Let $A\in\R^{2\times2}$ be invertible and satisfy
		\begin{equation}\label{eq:A-uniform-condition}
			\|A\|_{\HS}+\|A^{-1}\|_{\HS}\le K.
		\end{equation} 
		Then the following statements hold.
		\begin{enumerate}
			\item Fix $k\ge2$ and let $\mathbf F=(F_1,F_2)\in (\mathcal H_k(W))^2$. There exist constants $0<c_1\le C_1<\infty$, depending only on $k$ and $K$, such that
			\begin{equation}\label{eq:M-linear-equivalence}
				c_1 M_2(\mathbf F)\le M_2(A\mathbf F)\le C_1 M_2(\mathbf F).
			\end{equation}
			\item For every $m\in\N_0$ and $r\in[1,\infty]$, there exist constants $0<c_2\le C_2<\infty$, depending only on $m,r$ and $K$, such that for $g\in W^{m,r}(\R^2) $,
			\begin{equation*}
				c_2 \|g\|_{W^{m,r}(\R^2)}
				\le
				\|T_Ag\|_{W^{m,r}(\R^2)}
				\le
				C_2\|g\|_{W^{m,r}(\R^2)}.
			\end{equation*}
			\item For arbitrary $\R^2$-valued random vectors $X,Y$ with finite first moments,
			\begin{equation}\label{eq:metric-linear}
				d_{\mathrm{TV}}(AX,AY)=d_{\mathrm{TV}}(X,Y),
				\qquad
				K^{-1}d_{\rm W}(X,Y)\le d_{\rm W}(AX,AY)\le Kd_{\rm W}(X,Y).
			\end{equation}
		\end{enumerate}
	\end{lemma}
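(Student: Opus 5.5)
The three parts of Lemma~\ref{lem:linear-change} are essentially independent, and I would prove them in reverse order of difficulty.

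\emph{Part (3).} For total variation, the map $x\mapsto Ax$ is a Borel bijection of $\R^2$ with Borel inverse. Hence $\{AX\in B\}=\{X\in A^{-1}B\}$, and $B\mapsto A^{-1}B$ is a bijection of $\mathcal B(\R^2)$. Taking suprema gives equality of the two total variation distances.

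For the Wasserstein distance, if $\operatorname{Lip}(h)\le1$ then $h\circ A$ is Lipschitz with constant at most $\|A\|_{\mathrm{op}}\le\|A\|_{\HS}\le K$. This gives $d_{\rm W}(AX,AY)\le K d_{\rm W}(X,Y)$. Applying the same bound with $A^{-1}$ to the pair $(AX,AY)$ gives the reverse inequality.

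\emph{Part (2).} I would use the chain rule: $\partial_\alpha(T_Ag)(x)=|\det A|\sum_{\beta}A_{\beta_1\alpha_1}\cdots A_{\beta_k\alpha_k}(\partial_\beta g)(Ax)$. This holds for smooth $g$, and for distributional derivatives by density or duality. Each coefficient is bounded by $K^k$. Changing variables $y=Ax$ gives
\[
\|(\partial_\beta g)(A\cdot)\|_{L^r}=|\det A|^{-1/r}\|\partial_\beta g\|_{L^r}.
\]
Now $|\det A|\le\|A\|_{\HS}^2/2\le K^2$ and $|\det A|^{-1}=|\det A^{-1}|\le K^2$, so every power of $|\det A|$ that appears lies between $K^{-2}$ and $K^2$. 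This yields the upper bound with $C_2$ depending on $m,r,K$; the case $r=\infty$ is the same argument with no Jacobian factor. The lower bound follows from $T_{A^{-1}}T_A g=g$, which holds because $|\det A^{-1}||\det A|=1$, together with the upper bound applied to $A^{-1}$.

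\emph{Part (1).} This is the main step. Third cumulants are multilinear. Since $(A\mathbf F)_i=\sum_j A_{ij}F_j$, we get $\kappa_{(i_1,i_2,i_3)}(A\mathbf F)=\sum A_{i_1j_1}A_{i_2j_2}A_{i_3j_3}\kappa_{(j_1,j_2,j_3)}(\mathbf F)$, so $\max|\kappa_3(A\mathbf F)|\le 8K^3 M_2(\mathbf F)$.

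The difficulty is the fourth-cumulant part. $M_2$ only contains the coordinate cumulants $\kappa_4(F_i)$, whereas $\kappa_4((A\mathbf F)_i)$ expands into mixed fourth cumulants $\kappa_{(j_1,\dots,j_4)}(\mathbf F)$. These mixed cumulants must therefore be bounded by the coordinate ones, and I would use the known fixed-chaos estimate for this. For $U,V\in\mathcal H_k(W)$ and any unit vector $\theta$, the fourth cumulant of $\theta_1U+\theta_2V$ is nonnegative and is comparable to a sum of squared contraction norms $\|g\otimes_s g\|^2$, $1\le s\le k-1$ (Nualart--Peccati). The mixed cumulants are in turn bounded by Cauchy--Schwarz in terms of the contraction norms of the individual kernels, $\|u\otimes_s u\|$ and $\|v\otimes_s v\|$, together with $\|u\otimes_s v\|^2\le\|u\otimes_{k-s}u\|\,\|v\otimes_{k-s}v\|$. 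This yields
\[
|\kappa_{(j_1,\dots,j_4)}(\mathbf F)|\le C_k\bigl(\kappa_4(F_1)+\kappa_4(F_2)\bigr).
\]
Expanding by multilinearity then gives $\kappa_4((A\mathbf F)_i)\le 16K^4C_k\,M_2(\mathbf F)$. Both $F_1$ and $F_2$ lie in the same chaos $\mathcal H_k(W)$, and so does $A\mathbf F$, so this constant is uniform. The lower bound follows by applying the upper bound to $A^{-1}$ acting on $A\mathbf F$, which is legitimate since $A\mathbf F\in(\mathcal H_k(W))^2$.

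The obstacle is precisely this mixed-cumulant domination. If the contraction bound is inconvenient, I would alternatively quote the corresponding comparison lemma from \cite{CCL2026}, where $M_d$ is built exactly so as to be controlled in this way.
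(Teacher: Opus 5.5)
Your proposal is correct and follows essentially the same route as the paper. Parts (2) and (3) match it exactly: the chain rule, the change of variables with $K^{-2}\le|\det A|\le K^2$, bijectivity of $x\mapsto Ax$ and the Lipschitz constant of $h\circ A$, with the reverse inequalities obtained from $A^{-1}$. For part (1), the paper expands the contraction formula for $\kappa_4$ of the combined kernel $a_1f_1+a_2f_2$ rather than passing through mixed joint cumulants, but the key input is the same Cauchy--Schwarz bound $\|u\otimes_s v\|^2\le\|u\otimes_{k-s}u\|\,\|v\otimes_{k-s}v\|$ for $1\le s\le k-1$, yielding domination by $\max\{\kappa_4(F_1),\kappa_4(F_2)\}$.
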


	The proof of Lemma~\ref{lem:linear-change} is given in Subsection~\ref{subsec:auxiliary-proofs}. For the Kolmogorov distance and the fixed-target lower bound, we also need the following separation estimate. For $h\in L^1(\R^2)$, write
	\begin{align*}
		&\|h\|_{\mathrm{Kol}}
		:=
		\sup_{x\in\R^2}
		\left|
		\int_{(-\infty,x_1]\times(-\infty,x_2]}h(y)\,\mathrm dy
		\right|,\\
		&\|h\|_{\mathrm{BL}}
		:=
		\sup\left\{
		\left|\int_{\R^2}g(x)h(x)\,\mathrm dx\right|:
		g\in C_c^\infty(\R^2),\ \|g\|_\infty\le1,\ \operatorname{Lip}(g)\le1
		\right\}.
	\end{align*}
	If $\R^2$-valued random vectors $X$ and $Y$ have densities $p_X$ and $p_Y$, respectively, then
	\[
	\begin{aligned}
		d_{\mathrm{TV}}(X,Y)&=\frac12\|p_X-p_Y\|_{L^1(\R^2)},\qquad
		d_{\mathrm{Kol}}(X,Y)=\|p_X-p_Y\|_{\mathrm{Kol}},\\
		d_{\rm W}(X,Y)&\ge \|p_X-p_Y\|_{\mathrm{BL}}.
	\end{aligned}
	\]
	Set
	\[
	\mathcal V
	:=
	\Span\{\partial_\alpha\phi:\alpha\in[2]^3\cup[2]^4\}.
	\]
	For a positive definite $\Sigma\in\R^{2\times2}$ and a symmetric $H\in\R^{2\times2}$, define
	\begin{equation}\label{eq:cov-leading-correction}
		\mathcal L_\Sigma(H)(x)
		:=
		\frac12\phi_\Sigma(x)
		\left(
		x^\top \Sigma^{-1}H\Sigma^{-1}x-\operatorname{tr}(\Sigma^{-1}H)
		\right)
		=
		\frac12\sum_{i,j=1}^2H_{ij}\partial_{ij}\phi_\Sigma(x),
	\end{equation}
	which is the directional derivative of the Gaussian density $\phi_\Sigma$ with respect to the covariance matrix $\Sigma$ in the direction $H$.
	
	\begin{lemma}\label{lem:Gaussian-separation}
		Fix $K\ge1$, $m\in\N_0$ and $r\in[1,\infty]$, and let
		$\|\cdot\|_\ast$ be any one of
		\[
		\|\cdot\|_{W^{m,r}(\R^2)},\qquad
		\|\cdot\|_{L^1(\R^2)},\qquad
		\|\cdot\|_{\mathrm{Kol}},\qquad
		\|\cdot\|_{\mathrm{BL}}.
		\]
		There exists $c_\ast>0$, depending only on $K,m,r$ and the chosen norm,
		such that, for every invertible $A\in\R^{2\times2}$ satisfying
		\eqref{eq:A-uniform-condition}, every symmetric
		$H\in\R^{2\times2}$, and every $h\in\mathcal V$,
		\begin{equation*}
			\|\mathcal L_{\Sigma_A}(H)+T_Ah\|_\ast
			\ge
			c_\ast\bigl(\|H\|_{\HS}+\|h\|_\ast\bigr),
			\qquad
			\Sigma_A:=(A^\top A)^{-1}.
		\end{equation*}
	\end{lemma}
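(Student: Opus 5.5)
The plan is a finite-dimensionality plus compactness argument. Everything happens inside one fixed finite-dimensional space of functions, and the only danger is a degeneration of the parameters, which the uniform bound \eqref{eq:A-uniform-condition} rules out.

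\emph{Step 1: the pullback is a fixed finite-dimensional space.} Fix $A$ and set $\Sigma_A=(A^\top A)^{-1}$. Then
\[
T_A\phi(x)=|\det A|\phi(Ax)=\phi_{\Sigma_A}(x).
\]
By the chain rule, $T_A(\partial_\alpha\phi)$ for $|\alpha|=j$ is a linear combination of the derivatives $\partial_\beta\phi_{\Sigma_A}$ with $|\beta|=j$, with coefficients polynomial in the entries of $A^{-1}$. Conversely, each $\partial_\beta\phi_{\Sigma_A}$ is such a combination of the $T_A\partial_\alpha\phi$, with coefficients polynomial in $A$. Hence
\[
\mathcal L_{\Sigma_A}(H)+T_Ah=T_A\Bigl(\tfrac12\textstyle\sum_{ij}\widetilde H_{ij}\partial_{ij}\phi+h\Bigr),
\qquad \widetilde H=A^{-\top}HA^{-1},
\]
and $\|\widetilde H\|_{\HS}\asymp\|H\|_{\HS}$ with constants depending only on $K$.

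\emph{Step 2: reduce to the identity covariance.} For the Sobolev and $L^1$ norms, Lemma~\ref{lem:linear-change}(2) makes $T_A$ bi-Lipschitz uniformly in $A$, since $L^1=W^{0,1}$. It therefore suffices to bound $\|\mathcal L_{I}(\widetilde H)+h\|_\ast$ from below, where $\widetilde H$ ranges over symmetric matrices and $h$ over $\mathcal V$, with the right-hand side $\|\widetilde H\|_{\HS}+\|T_A^{-1}\cdot\|$. For the Kolmogorov and BL norms, $T_A$ is not an isometry: Kolmogorov is not invariant under general linear maps, and BL changes by factors controlled by $K$. I would therefore not reduce to $A=I$. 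Instead I work directly with the finite-dimensional space
\[
E:=\Span\{\partial_\beta\phi_\Sigma:|\beta|\in\{2,3,4\}\}.
\]
Here $\Sigma$ ranges over the compact set $\mathcal K=\{\Sigma_A:\eqref{eq:A-uniform-condition}\}$.

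\emph{Step 3: linear independence.} On $E$ I use coordinates $(H,c)$, where $c$ collects the coefficients of $h$ in the basis $\{T_A\partial_\alpha\phi\}$. Modulo symmetry of mixed derivatives, the functions $\partial_\beta\phi_\Sigma$ with $|\beta|=2,3,4$ are linearly independent. Indeed, $\partial_\beta\phi_\Sigma=P_\beta\phi_\Sigma$ with $P_\beta$ a polynomial of exact degree $|\beta|$ whose top-degree part is $(-\Sigma^{-1}x)^{\beta}$. The top-degree parts within each degree are independent, and distinct degrees separate.

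So for each fixed $\Sigma$ the map $(H,c)\mapsto\mathcal L_\Sigma(H)+T_Ah$ is injective on a finite-dimensional space. Each $\|\cdot\|_\ast$ is a genuine norm on $E$: this is clear for Sobolev and $L^1$, and for Kolmogorov and BL a function with vanishing norm integrates to zero against all rectangles, respectively all test functions, hence vanishes. Consequently
\[
\|\mathcal L_\Sigma(H)+T_Ah\|_\ast\ge c(\Sigma)\,(\|H\|_{\HS}+|c|).
\]
Since $\|T_Ah\|_\ast\le C|c|$ by finite dimensionality, it remains to make $c(\Sigma)$ uniform. The map
\[
(A,H,c)\mapsto\|\mathcal L_{\Sigma_A}(H)+T_Ah\|_\ast
\]
is continuous on the compact set where $A$ satisfies \eqref{eq:A-uniform-condition} and $\|H\|_{\HS}+|c|=1$. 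Continuity holds because the Gaussian derivatives depend continuously on $A$ in every one of these norms: this follows by dominated convergence with Gaussian envelopes, and BL and Kolmogorov are dominated by $L^1$. The map is positive there, so its minimum $c_\ast>0$ is attained, and homogeneity finishes the argument.

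The main obstacle is this last uniformity step, which relies on the compactness of the admissible set of $A$ together with continuity in $A$ for the Kolmogorov and BL norms. The rest is the algebraic independence check.
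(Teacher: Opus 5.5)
Your proposal is correct and is essentially the paper's argument: compactness of the admissible set of $A$ and of a finite-dimensional unit sphere in $(H,h)$, continuity of $(A,H,h)\mapsto\|\mathcal L_{\Sigma_A}(H)+T_Ah\|_\ast$ via Gaussian envelopes together with $\|\cdot\|_{\mathrm{Kol}},\|\cdot\|_{\mathrm{BL}}\le\|\cdot\|_{L^1}$, and positivity from the linear independence of the second-, third- and fourth-order derivatives of $\phi_{\Sigma_A}$ (you check this through top-degree polynomial parts, where the paper uses the direct sum of Gaussian Hermite spaces). The remaining slips are harmless: the pullback matrix is $\widetilde H=AHA^\top$ rather than $A^{-\top}HA^{-1}$, though still $\|\widetilde H\|_{\HS}\asymp\|H\|_{\HS}$; the inequality you actually need at the end is $\|h\|_\ast\le C|c|$, which is norm equivalence on $\mathcal V$, rather than $\|T_Ah\|_\ast\le C|c|$; and Step~2 can be dropped, since Step~3 treats all four norms at once.
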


	The proof of Lemma~\ref{lem:Gaussian-separation} is deferred to Subsection~\ref{subsec:auxiliary-proofs}.

	\begin{theorem}\label{thm:general-optimal}
		Fix $p,q\in\N_0$ with $p+q\ge2$. For $n\geq1$, let $F_n=I_{p,q}(f_n)$, where $f_n\in\mathfrak H^{\odot p}\otimes\mathfrak H^{\odot q}$. Write $\widetilde F_n=(\mathrm{Re}\,F_n,\mathrm{Im}\,F_n)$. Assume 
		\begin{equation}\label{eq:general-assumptions}
			\Sigma_n:=\Cov(\widetilde F_n)\rightarrow \Sigma,
			\qquad
			\Sigma\text{ is positive definite},
			\qquad
			\widetilde F_n\xrightarrow{\mathrm{law}}N_2(0,\Sigma).
		\end{equation}
		Then, for every $m\in\N_0$ and every $r\in[1,\infty]$, there exists an integer $n_0$ such that, for every $n\ge n_0$, $\widetilde F_n$ has a density $p_{F_n}\in W^{m,r}(\R^2)$ and 
		\begin{equation}\label{eq:general-optimal-bound}
			\|p_{F_n}-\phi_{\Sigma_n}\|_{W^{m,r}(\R^2)}
			\asymp d_\ast\bigl(\widetilde F_n,N_2(0,\Sigma_n)\bigr) \asymp M_{\C}(F_n),
		\end{equation}
		where $d_\ast\in\{d_{\mathrm{TV}},d_{\mathrm{Kol}},d_{\rm W}\}$.
	\end{theorem}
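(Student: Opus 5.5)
The plan is to reduce Theorem~\ref{thm:general-optimal} to the normalized case, Proposition~\ref{prop:normalized-optimal}, by a linear change of variables. Since $\Sigma_n\to\Sigma$ with $\Sigma$ positive definite, $\Sigma_n$ is positive definite for all large $n$. Set $A_n:=\Sigma_n^{-1/2}$. Then $\|A_n\|_{\HS}+\|A_n^{-1}\|_{\HS}\le K$ for some $K$ depending only on $\Sigma$ and all $n\ge n_1$.

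Define $\mathbf G_n:=A_n\widetilde F_n$. Its components are real linear combinations of $\mathrm{Re}\,F_n,\mathrm{Im}\,F_n$, so by Proposition~\ref{prop:complex-real} they lie in $\mathcal H_{p+q}(W)$. Moreover $\Cov(\mathbf G_n)=I_2$, and $\mathbf G_n\xrightarrow{\mathrm{law}}N_2(0,I_2)$ because $A_n\to\Sigma^{-1/2}$ (Slutsky). $\mathbf G_n$ is not itself of the form $\widetilde F$ for a complex integral $F$, so I apply Theorem~\ref{thm:CCL-real-optimal} directly with $d=2$ and $k=p+q$. This gives, for large $n$, a density $p_{\mathbf G_n}\in W^{m,r}$ and
\[
\|p_{\mathbf G_n}-\phi\|_{W^{m,r}}\asymp d_\ast(\mathbf G_n,N_2(0,I_2))\asymp M_2(\mathbf G_n).
\]

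Now transfer everything back. By Lemma~\ref{lem:linear-change}(1), applied with the uniform bound $K$, we have $M_2(\mathbf G_n)\asymp M_2(\widetilde F_n)$, and Lemma~\ref{lem:MC-real} gives $M_2(\widetilde F_n)\asymp M_{\C}(F_n)$.

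For densities, $\widetilde F_n=A_n^{-1}\mathbf G_n$ has density $p_{F_n}=T_{A_n}p_{\mathbf G_n}$. Also $\phi_{\Sigma_n}=T_{A_n}\phi$, since $\Sigma_n=(A_n^\top A_n)^{-1}$. Hence $p_{F_n}-\phi_{\Sigma_n}=T_{A_n}(p_{\mathbf G_n}-\phi)$, and Lemma~\ref{lem:linear-change}(2) gives the Sobolev equivalence with constants uniform in $n$.

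For the probability distances, Lemma~\ref{lem:linear-change}(3) handles $d_{\mathrm{TV}}$ (invariant) and $d_{\rm W}$ (comparable up to $K$), using $N_2(0,\Sigma_n)=A_n^{-1}N_2(0,I_2)$.

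The main obstacle is $d_{\mathrm{Kol}}$, which is not invariant under linear maps unless $A_n$ is diagonal.
\begin{itemize}
\item The upper bound is easy. With $r=1$ and $m=0$, $d_{\mathrm{Kol}}\le d_{\mathrm{TV}}\lesssim M_{\C}(F_n)$.
\item For the lower bound I would use the Edgeworth-type structure underlying Theorem~\ref{thm:CCL-real-optimal}. One expects $p_{\mathbf G_n}-\phi=h_n+o(M_2(\mathbf G_n))$ in $L^1$, with $h_n\in\mathcal V$ built from third and fourth cumulants and $\|h_n\|\asymp M_2(\mathbf G_n)$.
\item Then $p_{F_n}-\phi_{\Sigma_n}=T_{A_n}h_n+o(M_2)$ in $L^1$, hence also in $\|\cdot\|_{\mathrm{Kol}}$. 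Lemma~\ref{lem:Gaussian-separation} with $H=0$ and the Kolmogorov norm gives $\|T_{A_n}h_n\|_{\mathrm{Kol}}\ge c\|h_n\|_{\mathrm{Kol}}$. Finally, all norms on the finite-dimensional space $\mathcal V$ are equivalent, so $\|h_n\|_{\mathrm{Kol}}\asymp\|h_n\|_{L^1}\asymp M_2(\mathbf G_n)$.
\end{itemize}
If that expansion is not available from the earlier results, I would fall back on a different lower bound. Namely, bound $d_{\mathrm{Kol}}(\widetilde F_n,N_2(0,\Sigma_n))$ from below by $|\E\psi(\widetilde F_n)-\E\psi(N)|$ for smooth test functions $\psi$ obtained by integrating indicator functions of quadrants against smooth weights. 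A smooth distance of this kind still detects $M_{\C}$ by the lower bound in \cite{Chen2024Optimal}.
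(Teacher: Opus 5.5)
Your proposal is correct and follows the paper's proof essentially step by step. The steps match: the normalization $\mathbf G_n=\Sigma_n^{-1/2}\widetilde F_n$, a direct application of Theorem~\ref{thm:CCL-real-optimal}, transfer back via Lemmas~\ref{lem:linear-change} and~\ref{lem:MC-real}, the Kolmogorov upper bound through $d_{\mathrm{TV}}$, and the Kolmogorov lower bound via a Gaussian expansion in $\mathcal V$ combined with Lemma~\ref{lem:Gaussian-separation} at $H=0$.

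The expansion you anticipate is exactly what the paper imports from \cite[Proposition~5.6]{CCL2026}: $\mathcal E_n\in\mathcal V$ with remainder $O(M_2(\mathbf G_n)^2+M_2(\mathbf G_n)^{5/4})=o(M_2(\mathbf G_n))$ in $L^1$, so your fallback is unnecessary. Your finite-dimensional norm-equivalence step replaces the paper's citation of \cite[Lemma~5.7]{CCL2026}; only the direction $\|\mathcal E_n\|_{\mathrm{Kol}}\gtrsim M_2(\mathbf G_n)$ is needed, and it holds because the coefficients of $\mathcal E_n$ are the cumulants and the $\partial_\alpha\phi$ for distinct symmetric multi-indices are linearly independent.
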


	\begin{proof}
		Since $\Sigma_n\to\Sigma$ and $\Sigma$ is positive definite, $\Sigma_n$ is positive definite for all sufficiently large $n$. Set
		\[
		A_n:=\Sigma_n^{-1/2},
		\qquad
		G_n=(G_{n,1},G_{n,2}):=A_n\widetilde F_n.
		\]
		Then $\Cov(G_n)=I_2$. Since each $G_{n,i}$ is a real linear combination of the two coordinates of $\widetilde F_n$, Proposition~\ref{prop:complex-real} gives
		$G_n\in(\mathcal H_{p+q}(W))^2$. Moreover, $A_n\to \Sigma^{-1/2}$, and hence Slutsky's theorem and \eqref{eq:general-assumptions} yield
		\[
		G_n\xrightarrow{\mathrm{law}}N_2(0,I_2).
		\]
		Thus $G_n$ satisfies the assumptions of Theorem~\ref{thm:CCL-real-optimal} with dimension $d=2$ and fixed real chaos order $p+q$. Consequently, for all sufficiently large $n$, $G_n$ has a density $p_{G_n}\in W^{m,r}(\R^2)$ and
		\begin{align*}
			\|p_{G_n}-\phi\|_{W^{m,r}(\R^2)}\asymp d_\ast\bigl(G_n,N_2(0,I_2)\bigr)\asymp M_2(G_n),
			\qquad
			d_\ast\in\{d_{\mathrm{TV}},d_{\mathrm{Kol}},d_{\rm W}\}.
		\end{align*}
		Since $G_n=A_n\widetilde F_n$, the density transformation formula gives
		\[
		p_{F_n}=T_{A_n}p_{G_n},
		\qquad
		\phi_{\Sigma_n}=T_{A_n}\phi.
		\]
		The matrices $A_n$ satisfy \eqref{eq:A-uniform-condition} with a common constant for all sufficiently large $n$. Hence Lemma~\ref{lem:linear-change} yields
		\begin{align*}
			&M_2(G_n)\asymp M_2(\widetilde F_n),\qquad \|p_{F_n}-\phi_{\Sigma_n}\|_{W^{m,r}(\R^2)}
			\asymp
			\|p_{G_n}-\phi\|_{W^{m,r}(\R^2)},\\
			&d_\ast\bigl(\widetilde F_n,N_2(0,\Sigma_n)\bigr)
			\asymp
			d_\ast\bigl(G_n,N_2(0,I_2)\bigr),
			\qquad
			d_\ast\in\{d_{\mathrm{TV}},d_{\rm W}\}.
		\end{align*}
		Together with Lemma~\ref{lem:MC-real}, which gives
		$M_2(\widetilde F_n)\asymp M_{\C}(F_n)$, this proves \eqref{eq:general-optimal-bound} for the $W^{m,r}(\R^2)$ norm and $d_\ast\in\{d_{\mathrm{TV}},d_{\rm W}\}$. The Kolmogorov upper bound follows immediately from $d_{\mathrm{Kol}}\le d_{\mathrm{TV}}$.
		
		It remains to prove the Kolmogorov lower bound. By \cite[Proposition~5.6]{CCL2026}, for all sufficiently large $n$,
		\begin{align}\label{eq:expansion-normalized}
			p_{G_n}-\phi=\mathcal E_n+R_n,
		\end{align}
		where
		\[
		\mathcal E_n
		=
		-\frac16\sum_{\alpha\in[2]^3}\kappa_\alpha(G_n)\partial_\alpha\phi
		+\frac1{24}\sum_{\eta\in[2]^4}\kappa_\eta(G_n)\partial_\eta\phi
		\in\mathcal V,
		\]
		and
		\begin{equation}\label{eq:CCL-remainder-Kol}
			\|R_n\|_{W^{m,r}(\R^2)}+\|R_n\|_{L^1(\R^2)}
			\le
			C\bigl(M_2(G_n)^2+M_2(G_n)^{5/4}\bigr).
		\end{equation}
		Taking $H=0$ in Lemma~\ref{lem:Gaussian-separation} with $\|\cdot\|_\ast=\|\cdot\|_{\mathrm{Kol}}$, and using \cite[Lemma~5.7 and Eq.~(5.12)]{CCL2026}, we obtain, for all sufficiently large $n$,
		\[
		\|T_{A_n}\mathcal E_n\|_{\mathrm{Kol}}
		\ge
		c\|\mathcal E_n\|_{\mathrm{Kol}} \ge cM_2(G_n). 
		\]
		Since $p_{F_n}-\phi_{\Sigma_n}=T_{A_n}\mathcal E_n+T_{A_n}R_n$
		and $\|T_{A_n}R_n\|_{\mathrm{Kol}}\le\|T_{A_n}R_n\|_{L^1}=\|R_n\|_{L^1}$, using \eqref{eq:CCL-remainder-Kol}, we obtain
		\[
		d_{\mathrm{Kol}}\bigl(\widetilde F_n,N_2(0,\Sigma_n)\bigr) = \|p_{F_n}-\phi_{\Sigma_n}\|_{\mathrm{Kol}}
		\ge
		cM_2(G_n)-C\bigl(M_2(G_n)^2+M_2(G_n)^{5/4}\bigr).
		\]
		Finally, $G_n\xrightarrow{\mathrm{law}}N_2(0,I_2)$ in a fixed Wiener--It\^o chaos, so hypercontractivity gives convergence of the joint moments up to order four and hence $M_2(G_n)\to0$. Therefore the remainder is absorbed for all sufficiently large $n$, and
		\[
		d_{\mathrm{Kol}}\bigl(\widetilde F_n,N_2(0,\Sigma_n)\bigr)
		\ge \frac c2 M_2(G_n)
		\asymp M_{\C}(F_n).
		\]
		This completes \eqref{eq:general-optimal-bound} for $d_\ast=d_{\mathrm{Kol}}$.
	\end{proof}

	Combining Lemma~\ref{lem:MC-contraction} with Theorem~\ref{thm:general-optimal} gives the equivalent kernel formulation of the covariance-matched rate.
	\begin{corollary}\label{cor:contraction-optimal}
		Under the assumptions of Theorem~\ref{thm:general-optimal}, for every $m\in\N_0$, $r\in[1,\infty]$ and $d_\ast\in\{d_{\mathrm{TV}},d_{\mathrm{Kol}},d_{\rm W}\}$,
		\[
		\begin{aligned}
			\|p_{F_n}-\phi_{\Sigma_n}\|_{W^{m,r}(\R^2)}
			\asymp d_\ast \bigl(\widetilde F_n,N_2(0,\Sigma_n)\bigr)
			\asymp \mathcal R_{p,q}(f_n).
		\end{aligned}
		\]
	\end{corollary}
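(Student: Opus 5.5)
The statement is a direct combination of Theorem~\ref{thm:general-optimal} with Lemma~\ref{lem:MC-contraction}, so the plan is short.

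First, I would invoke Theorem~\ref{thm:general-optimal} under the stated assumptions \eqref{eq:general-assumptions}. It yields an $n_0$ such that, for $n\ge n_0$, $\widetilde F_n$ has a density $p_{F_n}\in W^{m,r}(\R^2)$. Each of $\|p_{F_n}-\phi_{\Sigma_n}\|_{W^{m,r}(\R^2)}$ and $d_\ast(\widetilde F_n,N_2(0,\Sigma_n))$, for $d_\ast\in\{d_{\mathrm{TV}},d_{\mathrm{Kol}},d_{\rm W}\}$, is then bounded above and below by constant multiples of $M_{\C}(F_n)$. These constants are independent of $n$ and depend on $p,q,m,r,\Sigma$.

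Second, I would apply Lemma~\ref{lem:MC-contraction} to each kernel $f_n\in\mathfrak H^{\odot p}\otimes\mathfrak H^{\odot q}$. This gives $c\,\mathcal R_{p,q}(f_n)\le M_{\C}(F_n)\le C\,\mathcal R_{p,q}(f_n)$, with $c,C$ depending only on $p,q$, and in particular uniform in $n$. Since $\asymp$ is transitive (two-sided bounds multiply), chaining the two equivalences for all $n\ge n_0$ gives the claim.

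There is no genuine obstacle here. The only point to check is that the constants in both steps are $n$-independent, which is exactly what the cited statements provide; the algebraic content lives entirely in Lemma~\ref{lem:MC-contraction}.
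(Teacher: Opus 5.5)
Your proposal is correct and follows the same route as the paper: the corollary is obtained by chaining the $n$-uniform equivalences of Theorem~\ref{thm:general-optimal} with the $p,q$-dependent comparison $M_{\C}(F_n)\asymp\mathcal R_{p,q}(f_n)$ of Lemma~\ref{lem:MC-contraction}. Your check that all comparison constants are independent of $n$ is the only point that needs verifying, and you handled it.
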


	The covariance-matched result isolates the intrinsic non-Gaussian error. To compare with the fixed limiting Gaussian law, one must additionally quantify the covariance perturbation and show that it cannot cancel the intrinsic third-order and fourth-order correction. We now complete the proof of Theorem~\ref{thm:intro-main}.
	
	\begin{proof}[Proof of Theorem~\ref{thm:intro-main}]
		We first compare the two Gaussian laws. Set
		\[
		\Sigma_{n,t}:=\Sigma+t(\Sigma_n-\Sigma),\qquad 0\le t\le1.
		\]
		Since $\Sigma_n\to\Sigma$ and $\Sigma$ is positive definite, for all sufficiently large $n$ the matrices $\Sigma_{n,t}$ are positive definite and $\Sigma_{n,t}^{-1}$ is uniformly bounded in $n$ and $t$. For fixed $x\in\R^2$, let
		\[
		g_{n,x}(t):=\phi_{\Sigma_{n,t}}(x).
		\]
		Differentiating $\Sigma_{n,t}\Sigma_{n,t}^{-1}=I_2$ and using Jacobi's formula yields
		\begin{equation}\label{eq:derivative-of-C}
			g_{n,x}'(t)
			=
			\frac12\phi_{\Sigma_{n,t}}(x)
			\left(
			x^\top \Sigma_{n,t}^{-1}(\Sigma_n-\Sigma)\Sigma_{n,t}^{-1}x
			-\operatorname{tr}(\Sigma_{n,t}^{-1}(\Sigma_n-\Sigma))
			\right).
		\end{equation}
		Thus, for every ordered multi-index $\alpha$ with $|\alpha|\le m$, applying $\partial_\alpha$ to $g_{n,x}'(t)$ with respect to $x$, we obtain 
		\[
		|\partial_\alpha g_{n,x}'(t)|
		\le
		C_m\|\Sigma_n-\Sigma\|_{\HS}(1+|x|^{m+2})e^{-c_0|x|^2},
		\qquad 0\le t\le1,
		\]
		for constants $c_0$ and $C_m$ independent of $n$ and $t$. Integrating in $t$ gives
		\begin{equation}\label{eq:Gaussian-first-order-upper}
			\|\phi_{\Sigma_n}-\phi_\Sigma\|_{W^{m,r}(\R^2)}
			\le
			C_{m,r}\|\Sigma_n-\Sigma\|_{\HS}.
		\end{equation}
		By Theorem~\ref{thm:general-optimal} and the triangle inequality, \eqref{eq:Gaussian-first-order-upper} proves the Sobolev upper bound in \eqref{eq:intro-fixed-target}. Taking $(m,r)=(0,1)$ gives the total variation and Kolmogorov upper bounds. For the $1$-Wasserstein distance, if $Z\sim N_2(0,I_2)$, then
		\[
		d_{\rm W}\bigl(N_2(0,\Sigma_n),N_2(0,\Sigma)\bigr)
		\le
		\E\bigl|(\Sigma_n^{1/2}-\Sigma^{1/2})Z\bigr|
		\le
		C\|\Sigma_n-\Sigma\|_{\HS}.
		\]
		This proves all upper bounds.
		
		We turn to the lower bounds. Still set $A_n:=\Sigma_n^{-1/2}$ and $G_n:=A_n\widetilde F_n$.
		Then $\Cov(G_n)=I_2$ and $G_n\to N_2(0,I_2)$ in law. With $\mathcal E_n$ and $R_n$ as in \eqref{eq:expansion-normalized},
		\[
		p_{G_n}-\phi=\mathcal E_n+R_n,
		\qquad
		\mathcal E_n\in\mathcal V,
		\]
		and
		\begin{equation}\label{eq:fixed-target-CCL-rem}
			\|R_n\|_{W^{m,r}(\R^2)}+\|R_n\|_{L^1(\R^2)}
			\le
			C_{m,r}\bigl(M_2(G_n)^2+M_2(G_n)^{5/4}\bigr).
		\end{equation}
		Meanwhile, we have
		\begin{equation}\label{eq:expand1}
			p_{F_n}-\phi_\Sigma = T_{A_n}(p_{G_n}-\phi)+(\phi_{\Sigma_n}-\phi_\Sigma)
			=
			T_{A_n}\mathcal E_n+T_{A_n}R_n
			+(\phi_{\Sigma_n}-\phi_\Sigma).
		\end{equation}
		From \eqref{eq:derivative-of-C},
		\[
		g_{n,x}'(1)=\mathcal L_{\Sigma_n}(\Sigma_n-\Sigma)(x).
		\]
		Moreover,
		\begin{align*}
			g_{n,x}''(t)
			=&
			\phi_{\Sigma_{n,t}}(x)
			\Bigg[
			\frac14
			\left(
			x^\top \Sigma_{n,t}^{-1}(\Sigma_n-\Sigma)\Sigma_{n,t}^{-1}x
			-\operatorname{tr}(\Sigma_{n,t}^{-1}(\Sigma_n-\Sigma))
			\right)^2\\
			&-x^\top \Sigma_{n,t}^{-1}(\Sigma_n-\Sigma)\Sigma_{n,t}^{-1}(\Sigma_n-\Sigma)\Sigma_{n,t}^{-1}x
			+\frac12\operatorname{tr}
			\bigl(\Sigma_{n,t}^{-1}(\Sigma_n-\Sigma)\Sigma_{n,t}^{-1}(\Sigma_n-\Sigma)\bigr)
			\Bigg].
		\end{align*}
		Taylor's formula around $t=1$ therefore gives
		\begin{equation}\label{eq:Gaussian-second-order}
			\phi_{\Sigma_n}-\phi_\Sigma
			=
			\mathcal L_{\Sigma_n}(\Sigma_n-\Sigma)+S_n,
			\qquad
			S_n(x):=-\int_0^1 t\,g_{n,x}''(t)\,\mathrm dt.
		\end{equation}
		The same Gaussian-polynomial estimate as in \eqref{eq:Gaussian-first-order-upper} gives, for every $m\in\N_0$ and $r\in[1,\infty]$,
		\begin{equation}\label{eq:Gaussian-second-order-rem}
			\|S_n\|_{W^{m,r}(\R^2)} + \|S_n \|_{L^1(\R^2)}
			\le
			C_{m,r}\|\Sigma_n-\Sigma\|_{\HS}^2.
		\end{equation}
		Combining \eqref{eq:expand1} and \eqref{eq:Gaussian-second-order},
		\begin{equation}\label{eq:fixed-target-expansion}
			p_{F_n}-\phi_\Sigma
			=
			\mathcal L_{\Sigma_n}(\Sigma_n-\Sigma)
			+T_{A_n}\mathcal E_n
			+T_{A_n}R_n+S_n.
		\end{equation}
		We now apply the same argument to the four norms $\|\cdot\|_{W^{m,r}}$, $\|\cdot\|_{L^1}$, $\|\cdot\|_{\mathrm{Kol}}$ and $\|\cdot\|_{\mathrm{BL}}$.
		Their restrictions to $\mathcal V$ are norms. Hence \cite[Lemma~5.7]{CCL2026} gives, for each of them $\|\cdot\|_{\ast}$,
		\begin{equation}\label{eq:fixed-target-leading-general}
			\|\mathcal E_n\|_\ast
			\ge
			c_\ast M_2(G_n)
		\end{equation}
		for all sufficiently large $n$. Since the matrices $A_n$ satisfy \eqref{eq:A-uniform-condition} with a common constant,
		Lemma~\ref{lem:Gaussian-separation}, together with \eqref{eq:fixed-target-leading-general}, yields
		\begin{equation}\label{eq:fixed-target-principal-lower}
			\|\mathcal L_{\Sigma_n}(\Sigma_n-\Sigma)+T_{A_n}\mathcal E_n\|_\ast
			\ge
			c_\ast\bigl(\|\Sigma_n-\Sigma\|_{\HS}+M_2(G_n)\bigr).
		\end{equation}
		For the Sobolev norm, Lemma~\ref{lem:linear-change}, \eqref{eq:fixed-target-CCL-rem} and \eqref{eq:Gaussian-second-order-rem} imply
		\[
		\|T_{A_n}R_n\|_{W^{m,r}(\R^2)}+\|S_n\|_{W^{m,r}(\R^2)}
		\le
		C_{m,r}
		\left(
		M_2(G_n)^2+M_2(G_n)^{5/4}+\|\Sigma_n-\Sigma\|_{\HS}^2
		\right).
		\]
		For the other three norms the same estimate follows from the above inequality with $(m,r)=(0,1)$ and $\|\cdot\|_{\mathrm{Kol}},\ \|\cdot\|_{\mathrm{BL}}\le\|\cdot\|_{L^1}$. Combining with \eqref{eq:fixed-target-expansion} and \eqref{eq:fixed-target-principal-lower} gives, for all sufficiently large $n$,
		\begin{align*}
			&\|p_{F_n}-\phi_\Sigma\|_\ast
			\ge \|\mathcal L_{\Sigma_n}(\Sigma_n-\Sigma)+T_{A_n}\mathcal E_n\|_\ast - \|T_{A_n}R_n\|_{\ast}-\|S_n\|_{\ast} \\
			\ge&\, c_{\ast}\bigl(\|\Sigma_n-\Sigma\|_{\HS}+M_2(G_n)\bigr) -C_{m,r}\left(
			M_2(G_n)^2+M_2(G_n)^{5/4}+\|\Sigma_n-\Sigma\|_{\HS}^2
			\right).
		\end{align*}
		Since $\|\Sigma_n-\Sigma\|_{\HS}\to0$ and $M_2(G_n)\to0$, for all sufficiently large $n$,
		\begin{align*}
			\|p_{F_n}-\phi_\Sigma\|_\ast
			&\ge
			\frac{c_{\ast}}{2}\bigl(\|\Sigma_n-\Sigma\|_{\HS}+M_2(G_n)\bigr).
		\end{align*}
		The choices of $\|\cdot\|_{\ast}$ as $\|\cdot\|_{W^{m,r}}$, $\|\cdot\|_{L^1}$, $\|\cdot\|_{\mathrm{Kol}}$ give the Sobolev, total variation and Kolmogorov lower bounds, respectively. The choice $\|\cdot\|_{\ast}=\|\cdot\|_{\mathrm{BL}}$, together with
		\[
		d_{\rm W}\bigl(\widetilde F_n,N_2(0,\Sigma)\bigr)
		\ge
		\|p_{F_n}-\phi_\Sigma\|_{\mathrm{BL}}
		\]
		gives the Wasserstein lower bound. Finally, Lemmas~\ref{lem:MC-real} and~\ref{lem:linear-change} yield $M_2(G_n)\asymp M_{\C}(F_n)$,
		and this completes the proof of Theorem~\ref{thm:intro-main}.
	\end{proof}
	
	\begin{remark}\label{rem:complex-vector-extension}
		Corollary~\ref{cor:contraction-optimal} concerns a single complex multiple Wiener--It\^o integral, identified with a two-dimensional real vector. Extending the optimal density and non-smooth bounds to a complex vector
		\[
		\bigl(I_{p_1,q_1}(f_1),\ldots,I_{p_d,q_d}(f_d)\bigr)
		\]
		requires additional structure. In particular, the contraction mechanisms appearing in the complex Berry--Ess\'een analysis of \cite{ChenChenLiu2025Berry} indicate that the relevant mixed contractions are governed not only by their total orders but also by the partial-order relations among the coordinate bidegrees $(p_j,q_j)$. This feature is specific to the complex setting and has no direct analogue for real vectors whose coordinates lie in one fixed chaos. A systematic treatment of these partial-order effects, together with the corresponding optimal Sobolev and non-smooth bounds for complex Wiener--It\^o vectors, will be developed in future work.
	\end{remark}

	\subsection{Auxiliary estimates for the optimal bounds}\label{subsec:auxiliary-proofs}
	
	This subsection contains the proofs of the technical stability and separation estimates used in Subsection~\ref{sec:optimal-gaussian-approximation}.

	\begin{proof}[Proof of Lemma~\ref{lem:MC-real}]
		Chen \cite[Lemma~4.3]{Chen2024Optimal} proves that
		\[
		\kappa_4(F_1)+\kappa_4(F_2)
		\le
		\Delta_{4,\C}(F)
		\le
		C\bigl(\kappa_4(F_1)+\kappa_4(F_2)\bigr).
		\]
		Hence the fourth-order parts of $M_2(F_1,F_2)$ and $M_{\C}(F)$ are equivalent. For the third-order part, \cite[Lemma~4.4]{Chen2024Optimal} gives
		\begin{align*}
			\frac14\bigl(|\E[F^3]|+|\E[F^2\overline F]|\bigr)
			&\le
			|\E[F_1^3]|+|\E[F_2^3]|+|\E[F_1^2F_2]|+|\E[F_1F_2^2]|\\
			&\le
			\sqrt2\bigl(|\E[F^3]|+|\E[F^2\overline F]|\bigr).
		\end{align*}
		In a fixed finite-dimensional space, the maximum norm and the sum norm are equivalent. Combining the third-order and fourth-order comparisons proves \eqref{eq:MC-real-equivalence}.
	\end{proof}
	
	\begin{proof}[Proof of Lemma~\ref{lem:MC-contraction}]
		Suppose first that $p,q\in\N$. By \cite[Lemma~3.2]{ChenChenLiu2025Berry}, $\mathcal T_1(f)=|\E[F^3]|$ and $\mathcal T_2(f)=|\E[F^2\overline F]|$. Moreover, \cite[Proposition~3.2]{ChenChenLiu2025Berry} gives
		\begin{align}\label{eq:kernel_exp_fourth_1}
			\Delta_{4,\C}(F)\asymp
			\sum_{\substack{0\le i\le p,\ 0\le j\le q\\0<i+j<p+q}}
			\|f\otimes_{i,j}f^\star\|_{\mathfrak H^{\otimes 2(p+q-i-j)}}^2,
		\end{align}
		where the comparison constants depend only on $p$ and $q$. It remains to consider the boundary cases. If $q=0$ and $p\ge2$, then the complex product formula \eqref{eq:complex-product-formula} and isometry property \eqref{eq:complex-isometry} give
		\[
		\E[F^3]=0,\qquad \E[F^2\overline F]=0.
		\]
		The fourth-moment expansion used in the proof of \cite[Proposition~3.2]{ChenChenLiu2025Berry} reduces to
		\begin{align*}
			\Delta_{4,\C}(F)
			=(p!)^2\sum_{i=1}^{p-1}\binom pi^2
			\|f\otimes_{i,0}f^\star\|_{\mathfrak H^{\otimes 2(p-i)}}^2.
		\end{align*}
		Similarly, if $p=0$ and $q\ge2$, then
		\[
		\E[F^3]=0,\qquad \E[F^2\overline F]=0,\qquad \Delta_{4,\C}(F)
		=(q!)^2\sum_{j=1}^{q-1}\binom qj^2
		\|f\otimes_{0,j}f^\star\|_{\mathfrak H^{\otimes 2(q-j)}}^2.
		\]
		Thus in both boundary cases $M_{\C}(F)=\Delta_{4,\C}(F)$ and the latter is comparable, with constants depending only on the nonzero chaos order, to the contraction sum in \eqref{eq:contraction-rate}. Combining this with the case $p,q\in\N$ proves \eqref{eq:MC-contraction-equivalence} for all $p,q\in\N_0$ with $p+q\ge2$.
	\end{proof}

	\begin{proof}[Proof of Lemma~\ref{lem:linear-change}]
		(1) Every third-order moment of $A\mathbf F$ is a linear combination of the four third-order moments appearing in \eqref{eq:Md-real-prelim} with dimension $d=2$, with coefficients bounded in terms of $K$. Hence the third-order part of $M_2(A\mathbf F)$ is bounded by $CM_2(\mathbf F)$. For the fourth-order part, write $F_i=I_k^W(f_i)$ with $f_i\in\mathfrak K^{\odot k}$, $i=1,2$. For $a=(a_1,a_2)\in\R^2$,
		\[
		a_1F_1+a_2F_2
		=
		I_k^W(a_1f_1+a_2f_2).
		\]
		The fourth cumulant $\kappa_4(a_1F_1+a_2F_2)$ is a finite positive linear combination, with coefficients depending only on $k$, of squared contraction norms of its kernel; see \cite[Lemma~5.2.4]{nourdin2012normal}. Expanding the contractions of $a_1f_1+a_2f_2$ and applying Cauchy--Schwarz to each mixed contraction show that
		\[
		\kappa_4(a_1F_1+a_2F_2)
		\le
		C_k|a|^4
		\max\{\kappa_4(F_1),\kappa_4(F_2)\}.
		\]
		Applying this estimate to the two rows of $A$ gives that the fourth-order part of $M_2(A\mathbf F)$ is bounded by $CM_2(\mathbf F)$. Thus
		\[
		M_2(A\mathbf F)\le CM_2(\mathbf F).
		\]
		Since $\mathbf F=A^{-1}(A\mathbf F)$ and $A^{-1}$ also satisfies \eqref{eq:A-uniform-condition}, applying the same argument to $A^{-1}$ yields the reverse inequality and proves \eqref{eq:M-linear-equivalence}.
		
		(2) For the Sobolev norms, repeated application of the chain rule shows that, for every ordered partial derivative $\partial_\alpha$ with $|\alpha|\le m$, $\partial_\alpha(T_Ag)(x)$ is $|\det A|$ times a finite linear combination of derivatives $(\partial_\beta g)(Ax)$ with $|\beta|=|\alpha|$, whose coefficients are bounded in terms of $m$ and $K$. Moreover, \eqref{eq:A-uniform-condition} implies 
		\[
		K^{-2}\le|\det A|\le K^2.
		\]
		For $1\le r<\infty$, the change of variables $y=Ax$ therefore gives
		\[
		\|T_Ag\|_{W^{m,r}(\R^2)}
		\le
		C\|g\|_{W^{m,r}(\R^2)}.
		\]
		The same bound is immediate for $r=\infty$. Applying the estimate to $A^{-1}$ and using $T_{A^{-1}}T_Ag=g$ gives the reverse inequality.
		
		(3) The total variation identity in \eqref{eq:metric-linear} follows from the fact that an invertible linear map is a measurable bijection of $\R^2$. For the Wasserstein distance, if $h$ is $1$-Lipschitz, then $h\circ A$ is $\|A\|_{\HS}$-Lipschitz.
		Hence
		\[
		d_{\rm W}(AX,AY)\le\|A\|_{\HS}d_{\rm W}(X,Y)\le Kd_{\rm W}(X,Y).
		\]
		Applying the same estimate to $A^{-1}$ gives the reverse inequality in \eqref{eq:metric-linear}.
	\end{proof}
	
	\begin{proof}[Proof of Lemma~\ref{lem:Gaussian-separation}]
		First note that $\|\cdot\|_{W^{m,r}(\R^2)}$ and $\|\cdot\|_{L^1(\R^2)}$ are norms on $\mathcal V$. The same is true for $\|\cdot\|_{\mathrm{Kol}}$. Indeed, if $\|h\|_{\mathrm{Kol}}=0$ for $h\in L^1(\R^2)$, then
		\[
		\int_{(-\infty,x_1]\times(-\infty,x_2]} h(y)\,\mathrm dy=0,
		\qquad x\in\R^2,
		\]
		and applying $\partial_1\partial_2$ in the sense of distributions gives $h=0$. The quantity $\|\cdot\|_{\mathrm{BL}}$ is also a norm on $\mathcal V$. Indeed, if $\|h\|_{\mathrm{BL}}=0$ for $h\in L^1(\R^2)$, rescaling arbitrary functions in $C_c^\infty(\R^2)$ shows that $\int gh=0$ for every $g\in C_c^\infty(\R^2)$, hence again $h=0$ as a distribution.
		Let
		\[
		\mathcal A_K
		:=
		\left\{
		A\in\R^{2\times2}:
		A\text{ is invertible and }
		\|A\|_{\HS}+\|A^{-1}\|_{\HS}\le K
		\right\}.
		\]
		If $\mathcal A_K$ is empty, the assertion is vacuous. Otherwise, $\mathcal A_K$ is compact. Let
		\[
		\mathbb S_2:=\{H\in\R^{2\times2}:H^\top=H\},
		\qquad
		\mathcal S_\ast
		:=
		\{(H,h)\in\mathbb S_2\times\mathcal V:
		\|H\|_{\HS}+\|h\|_\ast=1\}.
		\]
		The set $\mathcal S_\ast$ is also compact. Define
		\[
		\Psi:\mathcal A_K\times\mathcal S_\ast \rightarrow [0,\infty), \qquad \Psi(A,H,h)
		:=
		\|\mathcal L_{\Sigma_A}(H)+T_Ah\|_\ast.
		\]
		The map $\Psi$ is continuous. To see this, let $(A_n,H_n,h_n)\rightarrow(A,H,h)$ in $\mathcal A_K\times\mathcal S_\ast$.
		For every ordered multi-index $\beta$ with $|\beta|\le m$,
		\[
		\partial_\beta
		\bigl(
		\mathcal L_{\Sigma_{A_n}}(H_n)+T_{A_n}h_n
		\bigr)(x)
		=
		P_{\beta,n}(x)e^{-|A_nx|^2/2},
		\]
		where the degrees of the polynomials $P_{\beta,n}$ are at most $m+4$ and their coefficients converge to those of the
		corresponding polynomial $P_\beta$ associated with $(A,H,h)$.
		Since $A_n\to A$ and all $A_n,A$ belong to $\mathcal A_K$, there
		exist $c>0$, $M<\infty$, and a sequence $\varepsilon_n\to0$ such
		that
		\[
		\left|
		\partial_\beta
		\bigl(
		\mathcal L_{\Sigma_{A_n}}(H_n)+T_{A_n}h_n
		-
		\mathcal L_{\Sigma_A}(H)-T_Ah
		\bigr)(x)
		\right|
		\le
		\varepsilon_n(1+|x|)^M e^{-c|x|^2}.
		\]
		Indeed, this follows from the convergence of the polynomial
		coefficients and the mean value theorem applied to
		$A\mapsto e^{-|Ax|^2/2}$, together with
		$|A_nx|\ge K^{-1}|x|$.
		The right-hand side belongs to every $L^r(\R^2)$,
		$1\le r\le\infty$. Therefore
		\[
		\mathcal L_{\Sigma_{A_n}}(H_n)+T_{A_n}h_n
		\rightarrow
		\mathcal L_{\Sigma_A}(H)+T_Ah
		\]
		in $W^{m,r}(\R^2)$ and in $L^1(\R^2)$. Since $\|\cdot \|_{\mathrm{Kol}}\le\|\cdot \|_{L^1}$ and $\|\cdot\|_{\mathrm{BL}}\le\|\cdot\|_{L^1}$,
		the same convergence holds for the norms $\|\cdot \|_{\mathrm{Kol}}$ and $\|\cdot\|_{\mathrm{BL}}$. Thus $\Psi$ is continuous on
		$\mathcal A_K\times\mathcal S_\ast$.
		
		It remains to show that $\Psi$ is strictly positive. Suppose that $\Psi(A,H,h)=
		\|\mathcal L_{\Sigma_A}(H)+T_Ah\|_\ast=0$ for some $(A,H,h)\in \mathcal A_K\times\mathcal S_\ast$. Then $\mathcal L_{\Sigma_A}(H)+T_Ah=0$. Since
		$\phi_{\Sigma_A}=T_A\phi$, the chain rule shows that $T_Ah$ is a linear
		combination of third-order and fourth-order derivatives of $\phi_{\Sigma_A}$,
		whereas $\mathcal L_{\Sigma_A}(H)$ is a linear combination of second-order derivatives of $\phi_{\Sigma_A}$ by \eqref{eq:cov-leading-correction}. After division by $\phi_{\Sigma_A}$, $\mathcal L_{\Sigma_A}(H)$ belongs to the second Gaussian Hermite space, whereas $T_Ah$ belongs to the direct sum of the third and
		fourth Gaussian Hermite spaces. Since Gaussian Hermite spaces of different orders form a direct sum,
		\[
		\mathcal L_{\Sigma_A}(H)+T_Ah=0
		\]
		implies $\mathcal L_{\Sigma_A}(H)=0$ and $T_Ah=0$. Since $T_A$ is invertible,
		$T_Ah=0$ gives $h=0$. Moreover, the quadratic part of
		$\mathcal L_{\Sigma_A}(H)/\phi_{\Sigma_A}$ is
		\[
		\frac12 x^\top \Sigma_A^{-1}H\Sigma_A^{-1}x,
		\]
		so $\mathcal L_{\Sigma_A}(H)=0$ gives $H=0$.
		This contradicts $(H,h)\in\mathcal S_\ast$. Therefore $\Psi>0$ on the
		compact set $\mathcal A_K\times\mathcal S_\ast$, and
		\[
		c_\ast
		:=
		\min_{\mathcal A_K\times\mathcal S_\ast}\Psi
		>0.
		\]
		Therefore, for every $A\in\mathcal A_K$, $H\in\mathbb S_2$ and
		$h\in\mathcal V$, if $H$ or $h$ is non-zero, applying the preceding estimate to
		\[
		\frac{(H,h)}{\|H\|_{\HS}+\|h\|_\ast}\in\mathcal S_\ast
		\]
		gives
		\[
		\|\mathcal L_{\Sigma_A}(H)+T_Ah\|_\ast
		\ge
		c_\ast\bigl(\|H\|_{\HS}+\|h\|_\ast\bigr).
		\]
		The case $H=0$ and $h=0$ is immediate, and the proof is complete.
	\end{proof}
	
	\section{Application to a complex fractional Ornstein--Uhlenbeck process}
	\label{sec:app}

	As an application, we study the asymptotic behavior of the normalized numerator in the least-squares estimator of a complex fractional Ornstein--Uhlenbeck process arising from the classical Chandler wobble model. This statistic is a complex $(1,1)$-th Wiener--It\^o integral, so the general results of Section~\ref{sec:main-result} reduce the approximation problem to identifying the intrinsic moment scale and the covariance mismatch. We state the resulting optimal rates first and then establish the quantitative estimates needed for their proof.

	\subsection{Optimal Gaussian approximation}

	Throughout this section,
	\[
	H\in\left(\frac12,\frac34\right),
	\qquad
	\alpha_H:=H(2H-1),\qquad \beta:=2H-2,
	\qquad
	\R_+:=[0,\infty).
	\]
	Let $\mathcal E_{\C}$ be the space of complex-valued step
	functions on $\R_+$. We define $\mathfrak H_H$ as the
	completion of $\mathcal E_{\C}$ with respect to the
	inner product
	\begin{equation}\label{eq:fBm-inner}
		\langle f,g\rangle_{\mathfrak H_H}
		:=
		\alpha_H
		\int_{\R_+^2}
		f(s)\overline{g(t)}
		|t-s|^{\beta}\,\mathrm ds\,\mathrm dt.
	\end{equation}
	Then $\mathfrak H_H$ is a complex separable Hilbert space. Let $Z_H=\{Z_H(h):h\in\mathfrak H_H\}$
	be a complex isonormal Gaussian process over $\mathfrak H_H$.
	We define $\zeta_t
	:=
	Z_H(\mathbf 1_{[0,t]})$ for $t\ge0$.
	Then $\zeta=\{\zeta_t:t\ge0\}$ is a complex fractional
	Brownian motion. Indeed,
	\begin{equation*}
		\E[\zeta_t\zeta_s]=0,\qquad \E[\zeta_t\overline{\zeta_s}]
		=
		R_H(t,s)
		:=
		\frac12
		\left(
		t^{2H}+s^{2H}-|t-s|^{2H}
		\right).
	\end{equation*}
	
	Consider the complex fractional Ornstein--Uhlenbeck process
	\begin{equation*}
		dZ_t
		=
		-\gamma Z_t\,\mathrm dt+d\zeta_t,
		\qquad
		Z_0=0,
	\end{equation*}
	where $\gamma\in\C$ satisfies $\lambda:=\mathrm{Re}\,\gamma>0$. 
	Following \cite{HU2010}, Chen, Hu and Wang \cite{ChenHuWang2017} considered the least-squares estimator of $\gamma$, formally obtained by minimizing $\int_0^T \bigl|\dot Z_t+\gamma Z_t\bigr|^2\,\mathrm dt$.
	It is given by
	\begin{equation*}
		\widehat\gamma_T
		=
		\gamma-
		\frac{
			\int_0^T\overline{Z_t}\,\mathrm d\zeta_t
		}{
			\int_0^T|Z_t|^2\,\mathrm dt
		}.
	\end{equation*}
	We concentrate on the normalized numerator of $\widehat\gamma_T-\gamma$, which can be represented as a complex $(1,1)$-th Wiener--It\^o integral
	\begin{equation*}
		F_T
		:=
		\frac1{\sqrt T}
		\int_0^T\overline{Z_t}\,\mathrm d\zeta_t = I_{1,1}(T^{-1/2}\psi_T),
		\qquad
		\psi_T(t,s)
		:=
		e^{-\overline\gamma(t-s)}
		\mathbf 1_{\{0\le s\le t\le T\}}.
	\end{equation*}
	By Proposition~\ref{prop:complex-real}, $\widetilde F_T:=
	\bigl(\mathrm{Re}\,F_T,\mathrm{Im}\,F_T\bigr)$
	has both coordinates in the second real Wiener--It\^o chaos associated
	with the direct-sum space $\mathfrak H_{H,\R}\oplus\mathfrak H_{H,\R}$, where $\mathfrak H_{H,\R}
	:= \left\lbrace f\in \mathfrak H_H: \overline{f}=f \right\rbrace $
	is the associated real Hilbert space of $\mathfrak H_H$.
	
	For later use, set $v_T:=\E|F_T|^2$ and $\eta_T:=\E[F_T^2]$.
	The covariance matrix of $\widetilde F_T$ can then be written as
	\begin{equation}\label{eq:CT}
		\Sigma_T
		:=
		\Cov(\widetilde F_T)
		=
		\frac12
		\begin{pmatrix}
			v_T+\mathrm{Re}\,\eta_T
			&
			\mathrm{Im}\,\eta_T
			\\
			\mathrm{Im}\,\eta_T
			&
			v_T-\mathrm{Re}\,\eta_T
		\end{pmatrix}.
	\end{equation}
	The covariance calculations and the central limit theorem in
	\cite{ChenHuWang2017} imply that there exist
	$v_\infty\in(0,\infty)$ and $\eta_\infty\in\C$ such that $v_T\rightarrow v_\infty$ and $\eta_T\rightarrow\eta_\infty$ as $T\rightarrow\infty$.
	Then
	\begin{equation}\label{eq:cfou-CLT}
		\widetilde F_T
		\xrightarrow[T\to\infty]{\mathrm{law}}
		N_2(0,\Sigma_\infty),\qquad 
		\Sigma_\infty
		:=
		\frac12
		\begin{pmatrix}
			v_\infty+\mathrm{Re}\,\eta_\infty
			&
			\mathrm{Im}\,\eta_\infty
			\\
			\mathrm{Im}\,\eta_\infty
			&
			v_\infty-\mathrm{Re}\,\eta_\infty
		\end{pmatrix}.
	\end{equation}
	Explicit expressions for $v_\infty$ and $\eta_\infty$ are given in \cite{ChenHuWang2017}; those formulas also show that $\Sigma_\infty$ is positive definite.

	To state the quantitative estimates, define
	\begin{equation*}
		\rho_H(T)
		:=
		\begin{cases}
			T^{-1/2},
			& H\in(1/2,5/8],
			\\[1mm]
			T^{4H-3},
			& H\in(5/8,3/4).
		\end{cases}
	\end{equation*}
	We also use the scale
	\begin{equation*}
		\tau_H(T)
		:=
		\begin{cases}
			T^{-1/2},
			& H\in(1/2,2/3),
			\\[1mm]
			T^{-1/2}\log T,
			& H=2/3,
			\\[1mm]
			T^{6H-9/2},
			& H\in(2/3,3/4).
		\end{cases}
	\end{equation*}
	Notice that $\rho_H(T),\tau_H(T)\to0$ as $T\to\infty$. For $T\ge2$, a direct comparison of the exponents gives
	\begin{equation}\label{eq:rho-tau-comparison}
		\tau_H(T)\le C_H\rho_H(T),
		\qquad
		\rho_H(T)^2\le C_H\tau_H(T).
	\end{equation}

	Let $\phi_T:=\phi_{\Sigma_T}$ and $\phi_\infty:=\phi_{\Sigma_\infty}$ denote the centered Gaussian densities on $\R^2$ with covariance matrices $\Sigma_T$ and $\Sigma_\infty$,
	respectively. The next result gives the explicit optimal rates for both the covariance-matched and fixed-target approximations.
	
	\begin{theorem}\label{thm:cfou-density}
		For every $m\in\N_0$ and every $r\in[1,\infty]$, there exists $T_0\ge2$ such that, for every $T\ge T_0$,
		$\widetilde F_T$ has a density $p_T\in W^{m,r}(\R^2)$ and, for each
		$d_\ast\in\{d_{\mathrm{TV}},d_{\mathrm{Kol}},d_{\rm W}\}$,
		\begin{align}
			\|p_T-\phi_T\|_{W^{m,r}(\R^2)}
			&\asymp
			d_\ast\bigl(\widetilde F_T,N_2(0,\Sigma_T)\bigr)
			\asymp
			M_{\C}(F_T)
			\asymp
			\tau_H(T),
			\label{eq:cfou-optimal-CT}\\
			\|p_T-\phi_\infty\|_{W^{m,r}(\R^2)}
			&\asymp
			d_\ast\bigl(\widetilde F_T,N_2(0,\Sigma_\infty)\bigr)
			\asymp
			M_{\C}(F_T)+\|\Sigma_T-\Sigma_\infty\|_{\HS}
			\asymp
			\rho_H(T).
			\label{eq:cfou-explicit-Cinfty}
		\end{align}
	\end{theorem}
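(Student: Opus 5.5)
Theorem~\ref{thm:cfou-density} reduces to Theorems~\ref{thm:general-optimal} and~\ref{thm:intro-main} applied along an arbitrary sequence $T_n\to\infty$, once two explicit asymptotics are established:
\[
M_{\C}(F_T)\asymp\tau_H(T),\qquad M_{\C}(F_T)+\|\Sigma_T-\Sigma_\infty\|_{\HS}\asymp\rho_H(T).
\]
The hypotheses are available. $F_T=I_{1,1}(T^{-1/2}\psi_T)$ with $p+q=2$, $\Sigma_T\to\Sigma_\infty$ is positive definite, and \eqref{eq:cfou-CLT} gives the convergence in law. Since the constants in those theorems do not depend on the sequence, a standard subsequence argument converts the statements for sequences into statements for all $T\ge T_0$.

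For the intrinsic term, I will use Lemma~\ref{lem:MC-contraction} with $p=q=1$. Here $\mathcal T_1,\mathcal T_2$ reduce to $|\E[F_T^3]|$ and $|\E[F_T^2\overline F_T]|$. The contraction sum consists of $\|f\otimes_{1,0}f^\star\|^2$ and $\|f\otimes_{0,1}f^\star\|^2$ with $f=T^{-1/2}\psi_T$. Each third moment equals $T^{-3/2}$ times a triple integral of products of the kernels $e^{-\gamma(\cdot)}$ against the fBm covariance density $\alpha_H|t-s|^{2H-2}$ on a cycle of three variables. Each fourth-order contraction norm equals $T^{-2}$ times a cyclic fourfold integral of the same type.

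\begin{itemize}
\item For the cyclic integrals, I will use the exponential kernels to localize in consecutive differences. This leaves integrals of the form $\int_{[0,T]^k}\prod|t_i-t_{i+1}|^{2H-2}$ over a cycle.
\item These are known to grow like $T$, or like $T\log T$, or like $T^{k(2H-1)-(k-1)+\dots}$, according to whether the power-counting exponent is integrable. This is the standard Breuer--Major/Hu--Nualart type computation.
\item For $k=3$ this should yield $T^{-3/2}\cdot\max\{T,\,T\log T,\,T^{6H-3}\}$, which matches $\tau_H$ with the threshold $H=2/3$.
\item For the fourth-order contractions, the same count yields the scale $T^{-1}\vee T^{8H-6}$, with log at $H=3/4$. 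By \eqref{eq:rho-tau-comparison} this is $O(\tau_H)$, and for $H<3/4$ it is in fact of smaller order. It is therefore dominated.
\end{itemize}

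For the matching lower bound $M_{\C}(F_T)\gtrsim\tau_H$, I will show that the leading asymptotic constant of $\E[F_T^2\overline F_T]$ (or of $\E[F_T^3]$) is nonzero. To do this, I will compute the limit of the rescaled cyclic integral explicitly, via Fourier or Laplace representation of $|t-s|^{2H-2}$, and check that its value is not zero given $\mathrm{Re}\,\gamma>0$.

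For the covariance mismatch, by \eqref{eq:CT} it suffices to show
\[
|v_T-v_\infty|+|\eta_T-\eta_\infty|\asymp T^{-1}\vee T^{4H-3}.
\]
I would write $v_T$ and $\eta_T$ as $T^{-1}$ times double integrals over $[0,T]^2$ of $e^{-\gamma(t-s)}$-type kernels against the covariance density. The deviation from the limit then splits into two pieces.
\begin{itemize}
\item A boundary term of order $T^{-1}$.
\item A long-range tail term $T^{-1}\int_0^T\!\!\int_0^T|u-w|^{2H-2}\cdots$, contributing of order $T^{4H-3}$.
\end{itemize}
Comparing with $\rho_H$, the upper bound for the fixed-target rate is $\tau_H+T^{-1}+T^{4H-3}$. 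This is $\lesssim\rho_H$ when $H>5/8$, where $T^{4H-3}$ dominates. For $H\le5/8$ it is $\lesssim T^{-1/2}=\tau_H$, because $\tau_H=T^{-1/2}$ on $(1/2,2/3)$. For the lower bound, I will use $M_{\C}\gtrsim\tau_H$ on $H\le5/8$. On $H>5/8$, I will show that the coefficient of $T^{4H-3}$ in $v_T-v_\infty$ (or in $\eta_T-\eta_\infty$) is nonzero. The comparison $\tau_H\lesssim\rho_H$ then identifies the maximum.

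The main obstacle is the nondegeneracy of the leading constants. The upper bounds follow from power counting, but sharpness requires that the limiting cyclic third-moment constant, and the $T^{4H-3}$ coefficient of the covariance error, do not vanish. For the latter, I would first extract it as $\alpha_H\,C(\gamma)\int$ of a positive kernel; for instance, $\mathrm{Re}$ of $v_T-v_\infty$ should have a definite sign, since it is governed by $|\gamma|^{-2}\,\alpha_H\,T^{-1}\int\!\!\int |t-s|^{2H-2}$ with $(t,s)$ near the diagonal removed. For the third moment at $H\ge2/3$, a positivity argument is likewise available, because the dominant region has all three points far apart and the kernel becomes $|\gamma|^{-2}$-weighted and positive. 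For $H<2/3$ I would compute the constant through spectral densities and verify that it is nonzero.
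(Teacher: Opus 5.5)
Your proposal is essentially the paper's proof. You reduce to Theorems~\ref{thm:general-optimal} and~\ref{thm:intro-main} along arbitrary sequences $T_n\to\infty$, then establish $M_{\C}(F_T)\asymp\tau_H(T)$ and $\|\Sigma_T-\Sigma_\infty\|_{\HS}\asymp T^{4H-3}$ with a nonzero leading coefficient. The first comes from cyclic third-moment integrals with a nonvanishing leading constant (spectral for $H<2/3$, positive rescaled limit for $H\ge2/3$), with the fourth-order contractions of smaller order; the second is Lemma~\ref{lem:cfou-quant}, and the pair is exactly the content of Lemmas~\ref{lem:cfou-quant} and~\ref{lem:cfou-third-moments}.

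When you write it out, fix three slips:
\begin{enumerate}[(i)]
\item The four-cycle contraction integral changes regime (with a logarithm) at $H=5/8$, not at $H=3/4$. It is still $o(\tau_H)$, so nothing downstream changes.
\item The cyclic integrals must be taken with the regularized kernel $(1+|x|)^{2H-2}$. The pure powers $|t_i-t_{i+1}|^{2H-2}$ are not integrable on the diagonal in exactly the regimes where you claim growth of order $T$.
\item The correct asymptotic is $v_\infty-v_T\sim\alpha_H^2|\gamma|^{-2}\int_{\R}|r|^{4H-4}\min\{1,|r|/T\}\,\mathrm dr\asymp T^{4H-3}$. Your heuristic $T^{-1}\iint_{[0,T]^2}|t-s|^{2H-2}$ instead grows like $T^{2H-1}$.
\end{enumerate}
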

	
	Theorem~\ref{thm:cfou-density} shows that the covariance-matched and fixed-target approximations need not have the same rate. The two rates $\tau_H(T)$ and $\rho_H(T)$ have different origins. The covariance-matched rate $\tau_H(T)$ is governed by the intrinsic third-order and fourth-order moment terms, whereas the fixed-target rate $\rho_H(T)$ also records the covariance error. In particular, the transition at $H=2/3$ comes from the third-order moments, while the transition at $H=5/8$ comes from the covariance mismatch. The proof of the theorem is given after the estimates in Subsection~\ref{subsec:cfou-estimates}. 
	
	\begin{remark}[The Brownian case]
		The endpoint $H=1/2$ is not covered by the Hilbert-space
		representation \eqref{eq:fBm-inner} and should be treated separately.
		In the complex Brownian case, \cite[Section~4]{Chen2024Optimal} gives
		\[
		\E[F_T^2]=0,
		\qquad
		\E[|F_T|^2]
		=
		\frac{1}{2\lambda}
		+\frac{e^{-2\lambda T}-1}{4\lambda^2T},\qquad \widetilde F_T
		\xrightarrow[T\to\infty]{\mathrm{law}}
		N_2(0,\Sigma_\infty),
		\qquad
		\Sigma_\infty=\frac{1}{4\lambda}I_2.
		\]
		Hence
		\[
		\Sigma_T
		=
		\frac12\E[|F_T|^2]I_2
		=
		\left(
		\frac{1}{4\lambda}
		+\frac{e^{-2\lambda T}-1}{8\lambda^2T}
		\right)I_2,\qquad \|\Sigma_T-\Sigma_\infty\|_{\HS}\asymp T^{-1}.
		\]
		Moreover, the explicit computations in
		\cite[Lemmas~4.6--4.7]{Chen2024Optimal} show that
		\[
		\E[F_T^3]=0,
		\qquad
		|\E[F_T^2\overline{F_T}]|
		\asymp T^{-1/2},
		\qquad
		\Delta_{4,\C}(F_T)
		\asymp T^{-1}.
		\]
		Therefore, for every $m\in\N_0$, $r\in[1,\infty]$, and
		$d_\ast\in\{d_{\mathrm{TV}},d_{\mathrm{Kol}},d_{\rm W}\}$,
		Theorem~\ref{thm:general-optimal} yields
		\[
		\|p_T-\phi_{\Sigma_T}\|_{W^{m,r}(\R^2)}
		\asymp
		d_\ast\bigl(\widetilde F_T,N_2(0,\Sigma_T)\bigr)
		\asymp M_{\C}(F_T)\asymp 
		T^{-1/2}.
		\]
		On the other hand, Theorem~\ref{thm:intro-main} gives
		\begin{align*}
			\|p_T-\phi_{\Sigma_\infty}\|_{W^{m,r}(\R^2)}
			&\asymp
			d_\ast\bigl(\widetilde F_T,N_2(0,\Sigma_\infty)\bigr)\asymp
			M_{\C}(F_T)+\|\Sigma_T-\Sigma_\infty\|_{\HS}
			\asymp
			T^{-1/2}.
		\end{align*}
		Thus, in the Brownian case, both the covariance-matched approximation
		and the approximation to the limiting Gaussian distribution have the
		same optimal rate $T^{-1/2}$. This also complements the optimal rates for smooth and $1$-Wasserstein distances obtained in \cite[Theorem~4.5]{Chen2024Optimal} and \cite[Theorem~4.1]{ChenChenLiu2025Berry}, respectively.
	\end{remark}

	\subsection{Quantitative estimates}\label{subsec:cfou-estimates}
	
	We now establish the covariance and moment estimates that determine the two rates in Theorem~\ref{thm:cfou-density}.
	
	\begin{lemma}\label{lem:cfou-quant}
		There exist constants $0<c_{\gamma,H}\le C_{\gamma,H}<\infty$ and $T_0\ge1$ such that, for every $T\ge T_0$,
		\begin{align}
			&c_{\gamma,H}T^{4H-3} \le \|\Sigma_T-\Sigma_\infty\|_{\HS}
			\le
			C_{\gamma,H}T^{4H-3},\label{eq:cfou-cov-rate}\\
			&\sqrt{\Delta_{4,\C}(F_T)}
			\le
			C_{\gamma,H}\rho_H(T).\label{eq:cfou-defect-rate}
		\end{align}
	\end{lemma}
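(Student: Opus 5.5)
We prove \eqref{eq:cfou-cov-rate} first, then \eqref{eq:cfou-defect-rate}. By \eqref{eq:CT},
\[
\|\Sigma_T-\Sigma_\infty\|_{\HS}^2=\tfrac12\bigl(|v_T-v_\infty|^2+|\eta_T-\eta_\infty|^2\bigr),
\]
so \eqref{eq:cfou-cov-rate} reduces to showing that $\max\{|v_T-v_\infty|,|\eta_T-\eta_\infty|\}\asymp T^{4H-3}$. By the isometry \eqref{eq:complex-isometry} and \eqref{eq:complex-integral-conjugate},
\[
v_T=T^{-1}\|\psi_T\|^2_{\mathfrak H_H^{\otimes2}},
\qquad
\eta_T=T^{-1}\langle\psi_T,\psi_T^\star\rangle_{\mathfrak H_H^{\otimes 2}}.
\]
Both are explicit fourfold integrals
\[
\alpha_H^2T^{-1}\int_{[0,T]^4} e^{-\overline\gamma(t-s)}\,\overline{e^{\mp\ldots}}\,|t-t'|^{\beta}|s-s'|^{\beta}\,\mathbf 1_{\{s<t,\ s'<t'\}}.
\]
In each I change to the variables $(t-s,\ t'-s',\ t-t')$ plus one free location variable. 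Integrating out the location variable gives a factor $(T-\text{overlap})$. The leading term $T\cdot(\text{stationary integral})$ produces $v_\infty$ and $\eta_\infty$, and these must match the limits in \cite{ChenHuWang2017}. The correction is $T^{-1}$ times the tail and boundary contributions. Because of the exponential damping $e^{-\lambda(t-s)}$, the only slowly decaying kernel is $|t-t'|^{2\beta}$ integrated over the lag $u=t-t'\in[0,T]$, weighted by $(T-u)/T$. Hence
\[
v_T-v_\infty=-c\,T^{-1}\int_0^T u\cdot u^{2\beta}\,\mathrm du+\int_T^\infty(\cdots)+O(T^{-1}),
\]
where both pieces are of order $T^{4H-3}$ since $2\beta+2=4H-2\in(0,1)$. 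For the lower bound I need the explicit constant multiplying $T^{4H-3}$ in $v_T-v_\infty$ to be nonzero. I expect it to be a product of positive factors such as $|\gamma|^{-2}\alpha_H^2\Gamma$-type constants. If cancellation makes this unclear, I will use $\eta_T$ instead, or simply verify the sign for $v_T$, where all integrands are real after symmetrization. Extracting this constant precisely, and checking that it does not vanish, is the main obstacle.

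For \eqref{eq:cfou-defect-rate}, I use \eqref{eq:kernel_exp_fourth_1} with $p=q=1$. This gives
\[
\Delta_{4,\C}(F_T)\asymp T^{-2}\Bigl(\|\psi_T\otimes_{1,0}\psi_T^\star\|^2+\|\psi_T\otimes_{0,1}\psi_T^\star\|^2\Bigr).
\]
Each contraction norm squared is an eightfold integral with four fBm kernels $|\cdot|^\beta$ arranged in a cycle, with exponential factors along each edge. I integrate out the exponential differences, each of which contributes a bounded factor, and I bound $|e^{-\overline\gamma x}|\le e^{-\lambda x}$. This reduces the expression to $T^{-2}$ times a constant times
\[
\int_{[0,T]^4}\prod_{\text{cycle}}|x_i-x_{i+1}|^{\beta}\,\mathrm dx .
\]
This is the standard fourth-cumulant estimate for Breuer--Major-type quadratic functionals of fractional noise. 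It is bounded by $C\bigl(T\,\|\,|\cdot|^{\beta}\mathbf 1_{[0,T]}\|_{L^{4/3}}^{4}\bigr)$ via Young's inequality. It is therefore $O(T)$ for $H<5/8$, where $\beta\cdot4/3>-1$ and the kernel is integrable, $O(T\log T)$-type at $H=5/8$, and $O(T^{8H-4})$ for $H>5/8$.

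Dividing by $T^2$ gives $\Delta_{4,\C}\le C\rho_H(T)^2$. At $H=5/8$, a careful use of Hardy--Littlewood--Sobolev instead of Young removes the logarithm, since $\beta=-3/4$ is exactly the critical HLS exponent for $L^{4/3}$. Taking square roots gives \eqref{eq:cfou-defect-rate}.
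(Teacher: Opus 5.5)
\textbf{The proof of \eqref{eq:cfou-defect-rate} has a gap for $H\le 5/8$.} After integrating out the exponential edges, you bound the resulting $4$-cycle by pure powers $\prod_i|x_i-x_{i+1}|^{\beta}$. For $H\le 5/8$ that integral is $+\infty$. In the three difference variables the integrand is homogeneous of degree $4\beta\le-3$, so it is not integrable near the full diagonal. Your $L^{4/3}$ bookkeeping is reversed for the same reason: for $H<5/8$ one has $\tfrac43\beta<-1$, so $|\cdot|^\beta$ is in $L^{4/3}$ near infinity but not near the origin.

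The repair is to keep the envelope that integrating out an exponential edge actually produces, namely $\int_0^\infty e^{-\lambda u}|z\pm u|^\beta\,\mathrm du\le C(1+|z|)^\beta$, which is \eqref{eq:third-envelope-bound}. With $\rho(z)=(1+|z|)^\beta$, your Young bound $T\|\rho\mathbf 1_{[-T,T]}\|_{L^{4/3}}^4$ gives $O(T)$ for $H<5/8$ and $O(T^{8H-4})$ for $H>5/8$. That is a legitimate self-contained replacement for the paper's appeal to known contraction estimates.

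\textbf{At $H=5/8$ the step cannot be repaired.} Young only gives $T(\log T)^3$. Hardy--Littlewood--Sobolev does not help either: $\beta=-3/4$ makes the $4$-cycle exactly scale invariant, so the $4$-linear form with all entries $|z|^{-3/4}\in L^{4/3,\infty}$ diverges, and no weak-type bound can hold.

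More to the point, the logarithm is genuine. Away from the boundary, the product of the four effective edge kernels in $\|\psi_T\otimes_{1,0}\psi_T^\star\|^2$ is asymptotically $\alpha_H^4|\gamma|^{-4}\prod_i|x_i-x_{i+1}|^{-3/4}$, which is positive. The dyadic-shell argument the paper uses for $\E[F_T^3]$ at $H=2/3$ then gives $\|\psi_T\otimes_{1,0}\psi_T^\star\|^2\asymp T\log T$. By \eqref{eq:kernel_exp_fourth_1}, this means $\sqrt{\Delta_{4,\C}(F_T)}\asymp T^{-1/2}\sqrt{\log T}$.

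So \eqref{eq:cfou-defect-rate} with $\rho_{5/8}(T)=T^{-1/2}$ is too strong at this endpoint, and no method will prove it; the correct endpoint bound carries an extra factor $\sqrt{\log T}$. This is harmless for Theorem~\ref{thm:cfou-density}, whose proof only uses $\Delta_{4,\C}(F_T)\le C\tau_H(T)$.

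\textbf{Your argument for \eqref{eq:cfou-cov-rate} follows the paper's route and is essentially complete.} You integrate out the translation variable to get the overlap weight and then extract the lag tail $|r|^{2\beta}$, as the paper does. The constant you call the main obstacle follows directly from your own decomposition. The finite window both down-weights lags $|r|\le T$ by $|r|/T$ and removes lags $|r|>T$. Hence $T^{-1}\int_0^T r^{2\beta+1}\,\mathrm dr$ and $\int_T^\infty r^{2\beta}\,\mathrm dr$ enter with the same sign. Each is multiplied by $2\alpha_H^2\int_0^\infty e^{-\overline\gamma u}\,\mathrm du\int_0^\infty e^{-\gamma w}\,\mathrm dw=2\alpha_H^2/|\gamma|^2>0$. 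This reproduces the paper's limit $\frac{2\alpha_H^2}{|\gamma|^2}\bigl(\frac1{4H-2}+\frac1{3-4H}\bigr)$.

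Only your $O(T^{-1})$ remainder needs the uniform-integrability care that the paper supplies after the substitution $r=Tx$. Near $r=0$ and $r=w-u$ you cannot replace $|r|^\beta|r+u-w|^\beta$ by $|r|^{2\beta}$, which is not locally integrable there.
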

	
	\begin{proof}
		For \eqref{eq:cfou-defect-rate}, by \eqref{eq:kernel_exp_fourth_1}, it suffices to bound
		\[
		T^{-1}\left(
		\|\psi_T\otimes_{1,0}\psi_T^\star\|_{\mathfrak H_{H}^{\otimes2}}
		+
		\|\psi_T\otimes_{0,1}\psi_T^\star\|_{\mathfrak H_{H}^{\otimes2}}
		\right).
		\]
		Using the contraction estimates in
		\cite[Lemma~3.6]{ChenHuWang2017} and
		\cite[Lemma~3.1]{ChenKuangLi2020}; see also
		\cite[Remark~4.1]{ChenChenLiu2025Berry}, we obtain
		\eqref{eq:cfou-defect-rate}.
		
		We now prove \eqref{eq:cfou-cov-rate}. Recall that
		$\beta=2H-2\in(-1,-1/2)$. By \eqref{eq:complex-isometry} and
		\eqref{eq:fBm-inner},
		\begin{equation}\label{eq:vT-fourfold}
			v_T
			=\frac{\alpha_H^2}{T}
			\int_{\{0\le s_1\le t_1\le T,\ 0\le s_2\le t_2\le T\}}
			e^{-\overline\gamma(t_1-s_1)}e^{-\gamma(t_2-s_2)}
			|t_1-t_2|^\beta |s_1-s_2|^\beta
			\,\mathrm dt_1\mathrm ds_1\mathrm dt_2\mathrm ds_2.
		\end{equation}
		Put $u=t_1-s_1$, $w=t_2-s_2$, and $r=s_1-s_2$. For fixed $u,w\ge0$ and $r\in\R$, the admissible values of $s_2$ form the
		interval
		\[
		\max\{0,-r\}
		\le s_2\le
		\min\{T-w,T-u-r\}.
		\]
		Accordingly, for $0\le u,w\le T$, define
		\begin{equation}\label{eq:qT-definition}
			q_T(r;u,w)
			:=
			\frac1T
			\Bigl[
			\min\{T-w,T-u-r\}-\max\{0,-r\}
			\Bigr]_+,
		\end{equation}
		and set $q_T(r;u,w)=0$ if $u>T$ or $w>T$, where, throughout this proof, $[x]_+:=\max\{x,0\}$. Thus $0\le q_T\le1$, and
		integrating first with respect to the translation variable $s_2$ gives
		\begin{equation}\label{eq:vT-overlap}
			v_T
			=
			\alpha_H^2
			\int_{\R_+^2}e^{-\overline\gamma u-\gamma w}
			\int_{\R}|r|^\beta|r+u-w|^\beta q_T(r;u,w)
			\,\mathrm dr\,\mathrm du\,\mathrm dw.
		\end{equation}
		The limiting integral is absolutely convergent. Indeed,
		\[
		\int_{\R}|r|^\beta|r+d|^\beta\,\mathrm dr
		=C_\beta |d|^{2\beta+1},\qquad d\ne0,
		\]
		where $C_\beta<\infty$ because $\beta>-1$ and $2\beta<-1$, while
		$2\beta+1\in(-1,0)$. Since $0\le q_T\le1$, this absolute convergence provides an integrable dominator in $(r,u,w)$. Together with $q_T(r;u,w)\to1$ pointwise, the dominated convergence theorem yields
		\[
		v_\infty
		=
		\alpha_H^2
		\int_{\R_+^2}e^{-\overline\gamma u-\gamma w}
		\int_{\R}|r|^\beta|r+u-w|^\beta
		\,\mathrm dr\,\mathrm du\,\mathrm dw.
		\]
		Subtracting \eqref{eq:vT-overlap}, setting $r=Tx$, and using
		$2\beta+1=4H-3$, we obtain
		\begin{equation}\label{eq:vT-scaled-error}
			\begin{aligned}
				T^{3-4H}(v_\infty-v_T)
				&=
				\alpha_H^2
				\int_{\R_+^2}e^{-\overline\gamma u-\gamma w}
				\int_{\R}|x|^\beta
				\left|x+\frac{u-w}{T}\right|^\beta
				\bigl(1-q_T(Tx;u,w)\bigr)
				\,\mathrm dx\,\mathrm du\,\mathrm dw.
			\end{aligned}
		\end{equation}
		For every fixed $u,w\ge0$ and $x\ne0$, the overlap length in
		\eqref{eq:qT-definition} gives
		\begin{equation}\label{eq:qT-limit}
			q_T(Tx;u,w)\rightarrow(1-|x|)_+,
		\end{equation}
		and
		\begin{equation}\label{eq:qT-bound}
			1-q_T(Tx;u,w)
			\le
			\min\left\{1,|x|+\frac{u+w}{T}\right\}.
		\end{equation}
		We briefly record the uniform-integrability estimate needed below. Fix
		$R>0$ and let $T\ge2R$. On $0\le u,w\le R$ and $|x|>2R/T$,
		\[
		\left|x+\frac{u-w}{T}\right|\asymp |x|,
		\]
		so that \eqref{eq:qT-bound} bounds the absolute value of the $x$-integrand
		in \eqref{eq:vT-scaled-error} by
		\[
		C_R\left(
		|x|^{2\beta+1}\mathbf 1_{\{|x|\le1\}}
		+|x|^{2\beta}\mathbf 1_{\{|x|>1\}}
		\right),
		\]
		which is integrable because $2\beta+1>-1$ and $2\beta<-1$. On $0\le u,w\le R$ and $|x|\le2R/T$, writing $d=u-w$ gives
		\[
		\int_0^R\int_0^R
		\left|x+\frac{u-w}{T}\right|^\beta\,\mathrm du\,\mathrm dw
		=\int_{-R}^R(R-|d|)\left|x+\frac dT\right|^\beta\,\mathrm dd
		\le C_R T^{-\beta},
		\]
		and hence, again by \eqref{eq:qT-bound}, the contribution of this region
		is bounded by $ C_{R}T^{-2\beta-2}$, where $2\beta+2=4H-2>0$. Finally, on $\{u+w>R\}$,
		\[
		e^{-\lambda(u+w)}\le e^{-\lambda R/2}e^{-\lambda(u+w)/2}.
		\]
		The same estimate used above, with $\lambda/2$ in place of $\lambda$, therefore shows that the contribution of this region is bounded by $Ce^{-\lambda R/2}$, uniformly in $T$. Letting $R\to\infty$ proves the required tail control. Thus the family of integrands in
		\eqref{eq:vT-scaled-error} is uniformly integrable. Combining this with
		\eqref{eq:qT-limit} yields
		\begin{align*}
			\lim_{T\to\infty}
			T^{3-4H}(v_\infty-v_T)
			&=
			\alpha_H^2
			\left(
			\int_0^\infty e^{-\overline\gamma u}\,\mathrm du
			\right)
			\left(
			\int_0^\infty e^{-\gamma w}\,\mathrm dw
			\right)
			\int_{\R}
			|x|^{2\beta}
			\bigl(1-(1-|x|)_+\bigr)\,\mathrm dx\\
			&=
			\frac{2\alpha_H^2}{|\gamma|^2}
			\left(
			\int_0^1 x^{4H-3}\,\mathrm dx
			+\int_1^\infty x^{4H-4}\,\mathrm dx
			\right)\\
			&=
			\frac{2\alpha_H^2}{|\gamma|^2}
			\left(
			\frac1{4H-2}+\frac1{3-4H}
			\right) >0.
		\end{align*}
		In particular,
		\begin{equation}\label{eq:vT-rate}
			|v_T-v_\infty|\asymp T^{4H-3}.
		\end{equation}

		Using the reversed-conjugate kernel $\psi_T^\star$ in the second factor of \eqref{eq:vT-fourfold} and putting $u=t_1-s_1$, $ w=s_2-t_2$ and $r=s_1-t_2$, one obtains
		\begin{equation}\label{eq:etaT-overlap}
			\eta_T
			=
			\alpha_H^2
			\int_{\R_+^2}e^{-\overline\gamma(u+w)}
			\int_{\R}|r+u|^\beta|r-w|^\beta q_T(r;u,w)
			\,\mathrm dr\,\mathrm du\,\mathrm dw.
		\end{equation}
		Since
		\[
		\int_{\R}|r+u|^\beta|r-w|^\beta\,\mathrm dr
		=C_\beta(u+w)^{2\beta+1},
		\]
		the limiting integral is again absolutely convergent. As $0\le q_T\le1$, the same integrable majorant applies, and dominated convergence gives
		\[
		\eta_\infty
		=\alpha_H^2\int_{\R_+^2}e^{-\overline\gamma(u+w)}
		\int_{\R}|r+u|^\beta|r-w|^\beta
		\,\mathrm dr\,\mathrm du\,\mathrm dw.
		\]
		Subtracting \eqref{eq:etaT-overlap} and then setting $r=Tx$ gives
		\begin{align*}
			T^{3-4H}(\eta_\infty-\eta_T)=
			\alpha_H^2\int_{\R_+^2}e^{-\overline\gamma(u+w)}
			\int_{\R}
			\left|x+\frac uT\right|^\beta
			\left|x-\frac wT\right|^\beta
			\bigl(1-q_T(Tx;u,w)\bigr)
			\,\mathrm dx\,\mathrm du\,\mathrm dw.
		\end{align*}
		Using the same argument as above, one shows
		\begin{align}\label{eq:etaT-upper-rate}
			\lim_{T\to\infty}
			T^{3-4H}(\eta_\infty-\eta_T)=
			\frac{2\alpha_H^2}{{\overline \gamma}^2}
			\left(
			\frac1{4H-2}+\frac1{3-4H}
			\right) \neq 0.
		\end{align}
		Finally, from \eqref{eq:CT},
		\[
		\|\Sigma_T-\Sigma_\infty\|_{\HS}^2
		=
		\frac12\left(
		|v_T-v_\infty|^2+|\eta_T-\eta_\infty|^2
		\right).
		\]
		Together with \eqref{eq:vT-rate} and \eqref{eq:etaT-upper-rate}, this proves
		both bounds in \eqref{eq:cfou-cov-rate}.
	\end{proof}
	
	The preceding lemma controls the covariance mismatch and the fourth-moment defect. It remains to identify the leading intrinsic contribution, which comes from the two third-order complex moments.
	
	\begin{lemma}\label{lem:cfou-third-moments}
		There exist constants $0<c_{\gamma,H}\le C_{\gamma,H}<\infty$ and
		$T_0\ge2$ such that, for every $T\ge T_0$,
		\begin{equation}\label{eq:cfou-third-two-sided}
			c_{\gamma,H}\tau_H(T)
			\le
			|\E[F_T^3]|+|\E[F_T^2\overline{F_T}]|
			\le
			C_{\gamma,H}\tau_H(T).
		\end{equation}
	\end{lemma}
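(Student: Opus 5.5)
We will compute both third moments through the complex product formula, since $F_T=I_{1,1}(g_T)$ with $g_T:=T^{-1/2}\psi_T$. For $p=q=1$ the quantities $\mathcal T_1,\mathcal T_2$ of Lemma~\ref{lem:MC-contraction} are explicit. By \cite[Lemma~3.2]{ChenChenLiu2025Berry}, $\E[F_T^3]$ is a fixed linear combination of the two cyclic contractions $\langle g_T\widetilde{\otimes_{1,0}}g_T,g_T^\star\rangle$ and $\langle g_T\widetilde{\otimes_{0,1}}g_T,g_T^\star\rangle$. The same holds for $\E[F_T^2\overline{F_T}]$, with $g_T$ in place of $g_T^\star$. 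Unwinding the symmetrization, each of these is $T^{-3/2}$ times a sixfold integral over three ordered pairs $0\le s_k\le t_k\le T$, $k=1,2,3$. The integrand is a product of the exponentials $e^{-\overline\gamma(t_k-s_k)}$ or $e^{-\gamma(t_k-s_k)}$ and three kernels $\alpha_H|\cdot|^\beta$. These kernels link the variables in a triangle, pairing a $t$ or $s$ of one factor with an $s$ or $t$ of the next.

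The plan is to reproduce the change of variables from the proof of Lemma~\ref{lem:cfou-quant}. We set $u_k:=t_k-s_k\ge0$, which carries the exponential damping, and we keep two relative translations $r_1,r_2$. Integrating out the common translation produces a factor $T\,q_T(r_1,r_2;u)$ with $0\le q_T\le1$. Each third-moment quantity then becomes
\[
T^{-1/2}\alpha_H^3\int_{\R_+^3}e^{-(\cdot)}\int_{[-T,T]^2}K_\beta(r_1,r_2;u)\,q_T\,\mathrm dr_1\,\mathrm dr_2\,\mathrm du .
\]
Here $K_\beta$ is a product of three factors of the form $|r_i+\text{shift}|^\beta$ arranged in a cycle, for example $|r_1|^\beta|r_2|^\beta|r_1-r_2+\text{shift}|^\beta$.

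The size of the $r$-integral is set by homogeneity. At large scale, $K_\beta$ is homogeneous of degree $3\beta=6H-6$ in $(r_1,r_2)$, and the region has dimension $2$. The integral over $[-T,T]^2$ is therefore bounded when $3\beta+2<0$, i.e. $H<2/3$. It grows like $\log T$ when $H=2/3$, and like $T^{3\beta+2}=T^{6H-4}$ when $H>2/3$. Multiplying by $T^{-1/2}$ gives exactly the three regimes of $\tau_H(T)$.

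For the upper bound, we take absolute values and use $q_T\le1$. Integrating the $u$'s against $e^{-\lambda(u_1+u_2+u_3)}$ reduces matters to bounding $\int_{[-T,T]^2}K_\beta$ uniformly in the shifts. We split this integral into the region where $|r|\le1$, where the singularities are integrable because $\beta>-1$ and the pairwise products are fine since $2\beta>-2$, and the complementary region, where the homogeneity bound above applies.

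The lower bound is the main obstacle, because cancellation among the complex phases must be ruled out. For $H>2/3$ we rescale $r=Tx$, as in \eqref{eq:vT-scaled-error}. The shifts $u/T$ then vanish and $q_T(Tx;u)$ converges to an explicit overlap function $q(x)$. The same uniform-integrability argument as in Lemma~\ref{lem:cfou-quant} then gives
\[
T^{9/2-6H}\,\E[F_T^2\overline{F_T}]\to\alpha_H^3\,c(\gamma)\int_{\R^2}|x_1|^\beta|x_2|^\beta|x_1-x_2|^\beta q(x)\,\mathrm dx .
\]
Here $c(\gamma)$ is a product such as $|\gamma|^{-2}\overline\gamma^{-1}$, coming from the $u$-integrals of the exponentials. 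This limit is nonzero since the $x$-integrand is positive. For $H=2/3$ the same computation, restricted to the annulus $1\le|x|T\le T$, produces a $\log T$ with a positive coefficient. For $H<2/3$ dominated convergence applies without rescaling, because $K_\beta$ is integrable on $\R^2$. The limit of $T^{1/2}\E[F_T^2\overline{F_T}]$ is then an explicit integral $\int e^{-\overline\gamma u_1-\gamma u_2-\gamma u_3}\int_{\R^2}K_\beta$. Its nonvanishing is the delicate point, because the phases of $\gamma$ could in principle cancel. The first thing we will try is to evaluate the inner integral $\int K_\beta\,\mathrm dr$ as a positive multiple of a product of powers of sums of the $u_i$, using the Riesz composition identity that gave $C_\beta|d|^{2\beta+1}$. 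We then want to show that the resulting $u$-integral is nonzero by writing it as a Laplace-type transform. If this fails for the mixed moment, it suffices to prove that at least one of $\E[F_T^3]$ and $\E[F_T^2\overline{F_T}]$ has a nonzero limit, since the lemma only requires the sum to be bounded below.
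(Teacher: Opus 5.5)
Your architecture matches the paper's. You use the product formula to reach a cyclic triangle integral, whose large-scale homogeneity of degree $3\beta$ in two variables gives the three regimes of $\tau_H(T)$; then a scaling limit with positive integrand for $H>2/3$ and a dyadic logarithm at $H=2/3$. Your parametrization differs. You keep $u_k=t_k-s_k$ and let the overlap factor $q_T$ absorb the finite horizon. The paper instead integrates out the exponential variables first, obtaining one-variable kernels $K_T,L_T$ with the bounded envelope $C(1+|x-y|)^\beta$, and then replaces them by the stationary $K,L$ at cost $O(\mu_H(T))$.

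Your route avoids that replacement but keeps local singularities, and one step fails as written. For $H\le 2/3$ the $r$-integral is \emph{not} bounded uniformly in the shift. As the shift $c\to0$, the three singular lines of $|a_1|^\beta|a_2|^\beta|a_1+a_2+c|^\beta$ meet at one point. The integral near the origin is then of order $|c|^{3\beta+2}$, or of order $\log(1/|c|)$ when $\beta=-2/3$. Your check ($\beta>-1$, $2\beta>-2$) covers only pairwise coincidences; the triple point needs $3\beta>-2$, which fails. The repair is to bound the $r$-integral by $C|c|^{3\beta+2}$, resp. $C(1+\log T+\log_+(1/|c|))$, and integrate against the exponential $u$-weight. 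This is finite since $3\beta+2>-1$. The same bookkeeping is needed for the $O(1)$ errors at $H=2/3$.

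The genuine gap is the step you flagged: for $H<2/3$ you never show the limiting constant is nonzero. You also aimed at the mixed moment, where the cyclic shift $u_3-u_1-u_2$ is sign-indefinite and both $\gamma$ and $\overline\gamma$ appear. Your own Riesz idea closes the gap immediately if you use $\E[F_T^3]$, as the paper does. There every factor carries $e^{-\overline\gamma u_k}$, the links are $|s_k-t_{k+1}|^\beta$ (indices mod $3$), and the three link variables sum to $-S$, where $S:=u_1+u_2+u_3$. Two Riesz compositions give $\int_{\R^2}|a_1|^\beta|a_2|^\beta|a_1+a_2+S|^\beta\,\mathrm da=C\,S^{3\beta+2}$ with $C>0$. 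The second composition converges precisely because $\beta<-2/3$. This identity also supplies the dominating function for $q_T\to1$, so
\[
\lim_{T\to\infty}\sqrt T\,\E[F_T^3]=2\alpha_H^3C\int_{\R_+^3}e^{-\overline\gamma S}S^{3\beta+2}\,\mathrm du=\alpha_H^3C\,\Gamma(6H-1)\,\overline\gamma^{\,1-6H}\neq0,
\]
and no phase cancellation is possible. The paper reaches the same factor $\overline\gamma^{\,1-6H}$ differently. It computes $\widehat K(\xi)=c_H|\xi|^{1-2H}/(\overline\gamma+\ii\xi)$ and evaluates $\int_{\R}\widehat K(\xi)^3\,\mathrm d\xi$ through a Laplace representation and a cosine integral. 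Your Riesz--Laplace route is more elementary. The mixed moment's limit is also nonzero by a direct Laplace computation, but you do not need it, since only the sum must be bounded below.
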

	
	\begin{proof}
		For $x,y\in[0,T]$, set
		\begin{align*}
			K_T(x,y)
			:=
			\alpha_H\int_0^x e^{-\overline\gamma(x-u)}|u-y|^\beta\,\mathrm du,\qquad 
			L_T(x,y)
			:=
			\alpha_H\int_x^T e^{-\gamma(u-x)}|u-y|^\beta\,\mathrm du,
		\end{align*}
		and introduce their stationary counterparts
		\begin{align*}
			K(x)
			:=
			\alpha_H\int_0^\infty e^{-\overline\gamma u}|x-u|^\beta\,\mathrm du,\qquad
			L(x)
			:=
			\alpha_H\int_0^\infty e^{-\gamma u}|x+u|^\beta\,\mathrm du
			=\overline{K(-x)}.
		\end{align*}
		By the product formula \eqref{eq:complex-product-formula}, the isometry
		\eqref{eq:complex-isometry}, and \eqref{eq:fBm-inner},
		\begin{align}
			\E[F_T^3]
			&=
			\frac{2}{T^{3/2}}
			\int_{[0,T]^3}
			K_T(t_1,t_2)K_T(t_2,t_3)K_T(t_3,t_1)
			\,\mathrm dt_1 \mathrm dt_2 \mathrm dt_3,
			\label{eq:third-trace-1}\\
			\E[F_T^2\overline{F_T}]
			&=
			\frac{2}{T^{3/2}}
			\int_{[0,T]^3}
			K_T(t_1,t_2)K_T(t_2,t_3)L_T(t_3,t_1)
			\,\mathrm dt_1 \mathrm dt_2 \mathrm dt_3.
			\notag
		\end{align}
		For later use, we introduce the integral
		\[
		\mathcal G(z)
		:=
		\alpha_H\int_0^\infty e^{-\lambda u}
		\bigl(|z-u|^\beta+|z+u|^\beta\bigr)\,\mathrm du,
		\qquad z\in\R.
		\]
		For $|z|\le2$ the integral is uniformly bounded. For $|z|>2$, using the
		evenness of $\mathcal G$ and splitting the term containing $||z|-u|^\beta$
		over $[0,|z|/2]$, $[|z|/2,3|z|/2]$, and $[3|z|/2,\infty)$ gives
		\begin{equation}\label{eq:third-envelope-bound}
			\mathcal G(z)\le C_{\gamma,H}(1+|z|)^\beta,
			\qquad z\in\R.
		\end{equation}
		Here the possible singularities are locally integrable because $\beta>-1$,
		while the exponential factor controls the tails. After the changes
		of variables $u\mapsto x-u$ in $K_T$ and $u\mapsto u-x$ in $L_T$, the
		definitions immediately give
		\begin{equation}\label{eq:third-kernel-bound}
			|K_T(x,y)|+|L_T(x,y)|+|K(x-y)|+|L(x-y)|\le \mathcal G(x-y)
			\le C_{\gamma,H}(1+|x-y|)^\beta.
		\end{equation}
		Consequently,
		\begin{align*}
			|\E[F_T^3]|+|\E[F_T^2\overline{F_T}]|
			&\le
			\frac{C_{\gamma,H}}{T^{3/2}}
			\int_{[0,T]^3}
			\prod_{i=1}^3(1+|t_i-t_{i+1}|)^\beta
			\,\mathrm dt_1 \mathrm dt_2 \mathrm dt_3\\
			&\le
			\frac{C_{\gamma,H}}{T^{1/2}}
			\int_{[-T,T]^2}
			(1+|x|)^\beta(1+|y|)^\beta(1+|x-y|)^\beta
			\,\mathrm dx\,\mathrm dy,
		\end{align*}
		where $t_4=t_1$. A dyadic decomposition gives
		\begin{equation}\label{eq:triangle-integral-rate}
			\int_{[-T,T]^2}
			(1+|x|)^\beta(1+|y|)^\beta(1+|x-y|)^\beta
			\,\mathrm dx\,\mathrm dy
			\le C_H
			\begin{cases}
				1,&\beta<-2/3,\\
				\log T,&\beta=-2/3,\\
				T^{3\beta+2},&\beta>-2/3,
			\end{cases}=: \mu_{H}(T).
		\end{equation}
		This proves the upper bound in \eqref{eq:cfou-third-two-sided}.
		
		We next show that these orders are sharp. It is enough to consider
		$|\E[F_T^3]|$. We first remove the finite-boundary truncations. For $0\le x,y\le T$, writing
		$u=x+s$ and using \eqref{eq:third-envelope-bound} give
		\begin{align*}
			|K(x-y)-K_T(x,y)|
			&\le
			\alpha_H\int_x^\infty e^{-\lambda u}|x-y-u|^\beta\,\mathrm du\le e^{-\lambda x}\mathcal G(y)
			\le C_{\gamma,H}e^{-\lambda x}(1+y)^\beta.
		\end{align*}
		Hence a
		telescoping expansion,
		\eqref{eq:third-kernel-bound}, and
		\[
		\int_0^T e^{-\lambda t_1}(1+|t_3-t_1|)^\beta\,\mathrm dt_1
		\le \int_0^\infty e^{-\lambda t_1}|t_3-t_1|^\beta\,\mathrm dt_1\le C \mathcal G(t_3)  \le C(1+|t_3|)^\beta, 
		\]
		where $t_3\in \R$ and we use \eqref{eq:third-envelope-bound}, show that
		\begin{equation}\label{eq:third-stationary-replacement}
			\begin{aligned}
				&\int_{[0,T]^3}
				K_T(t_1,t_2)K_T(t_2,t_3)K_T(t_3,t_1)
				\,\mathrm dt_1\mathrm dt_2\mathrm dt_3 \\
				=&\,
				\int_{[0,T]^3}
				K(t_1-t_2)K(t_2-t_3)K(t_3-t_1)
				\,\mathrm dt_1\mathrm dt_2\mathrm dt_3
				+O(\mu_{H}(T)).
			\end{aligned}
		\end{equation}
		For example, the error coming from the first factor is bounded by
		\begin{align*}
			& \int_{[0,T]^3}
			|K_T(t_1,t_2)-K(t_1-t_2)| |K_T(t_2,t_3)| |K_T(t_3,t_1)|
			\,\mathrm dt_1\mathrm dt_2\mathrm dt_3\\
			\le&\,C\int_{[0,T]^3}e^{-\lambda t_1}(1+t_2)^\beta
			(1+|t_2-t_3|)^\beta(1+|t_3-t_1|)^\beta\,\mathrm dt_1\mathrm dt_2\mathrm dt_3\\
			\le&\,
			C\int_{[0,T]^2}(1+t_2)^\beta(1+t_3)^\beta
			(1+|t_2-t_3|)^\beta\,\mathrm dt_2\mathrm dt_3,
		\end{align*}
		and the last integral has exactly the three orders in
		\eqref{eq:triangle-integral-rate}; the other two errors are identical after
		a cyclic relabeling.
		
		It remains to determine the leading order of the stationary cube integral. Put $U(x,y):=\max\{|x|,|y|,|x+y|\}$.
		The change of variables $x=t_1-t_2$, $y=t_2-t_3$ gives
		\begin{equation}\label{eq:third-master}
			\begin{aligned}
				&\frac1T\int_{[0,T]^3}
				K(t_1-t_2)K(t_2-t_3)K(t_3-t_1)
				\,\mathrm dt_1\mathrm dt_2\mathrm dt_3\\
				=&\,
				\int_{\{U(x,y)\le T\}}
				K(x)K(y)K(-x-y)
				\left(1-\frac{U(x,y)}T\right)\,\mathrm dx\mathrm dy.
			\end{aligned}
		\end{equation}
		Indeed, after $x$ and $y$ are fixed, the remaining translation variable has
		admissible length $[T-U(x,y)]_+$.
		
		Suppose first that $H\in(1/2,2/3)$, equivalently $\beta<-2/3$.
		By \eqref{eq:third-kernel-bound} and \eqref{eq:triangle-integral-rate},
		$K(x)K(y)K(-x-y)$ is integrable on $\R^2$. Thus
		\eqref{eq:third-stationary-replacement}, \eqref{eq:third-master}, and dominated
		convergence yield
		\[
		\lim_{T\to\infty}\sqrt T\,\E[F_T^3]
		=
		2\int_{\R^2}K(x)K(y)K(-x-y)\,\mathrm dx\mathrm dy.
		\]
		The limiting constant is nonzero. Indeed, with the convention
		$\widehat f(\xi)=\int_{\R}e^{-\ii\xi x}f(x)\,\mathrm dx$,
		\[
		\widehat K(\xi)
		=c_H\frac{|\xi|^{1-2H}}{\overline\gamma+\ii\xi}.
		\]
		Fourier inversion identifies the last double integral, up to a positive normalization constant, with $\int_{\R}\widehat K(\xi)^3\,\mathrm d\xi$. Using the Laplace representation of $(\overline\gamma+\ii\xi)^{-3}$ and the
		standard cosine integral,
		\[
		\int_{\R}\widehat K(\xi)^3\,\mathrm d\xi
		=
		c_H^3\Gamma(4-6H)\Gamma(6H-1)
		\cos\bigl(\pi(2-3H)\bigr)\,\overline\gamma^{\,1-6H}\ne0.
		\]
		Hence $|\E[F_T^3]|\ge c_{\gamma,H}T^{-1/2}$ for all sufficiently large
		$T$.
		
		Next let $H\in(2/3,3/4)$, so $\beta>-2/3$. As $|x|\to\infty$, splitting the defining integral at $u=|x|/2$ and using
		the mean value theorem on the near part gives
		\begin{equation}\label{eq:K-tail}
			K(x)=\frac{\alpha_H}{\overline\gamma}|x|^\beta
			+O(|x|^{\beta-1}).
		\end{equation} 
		Scale
		$x=T\xi$, $y=T\eta$ in \eqref{eq:third-master}. By \eqref{eq:K-tail},
		\[
		T^{-\beta}K(T\xi)\rightarrow
		\frac{\alpha_H}{\overline\gamma}|\xi|^\beta
		\]
		for almost every $\xi$. Moreover, \eqref{eq:third-kernel-bound} gives
		$T^{-\beta}|K(T\xi)|\le C|\xi|^\beta$ for $\xi\ne0$, and by \eqref{eq:triangle-integral-rate}
		$|\xi|^\beta|\eta|^\beta|\xi+\eta|^\beta$ is integrable on
		$\{U(\xi,\eta)\le1\}$ precisely because $\beta>-2/3$. Therefore dominated convergence, together with \eqref{eq:third-stationary-replacement}, gives
		\begin{align*}
			T^{9/2-6H}\E[F_T^3]
			\rightarrow
			2\left(\frac{\alpha_H}{\overline\gamma}\right)^3
			\int_{\{U(\xi,\eta)\le1\}}
			|\xi|^\beta|\eta|^\beta|\xi+\eta|^\beta
			(1-U(\xi,\eta))\,\mathrm d\xi\mathrm d\eta.
		\end{align*}
		The integral on the right is finite and strictly positive, so the limiting
		constant is nonzero. This proves the lower bound in this range.

		Finally, let $H=2/3$, so that $\beta=-2/3$. Put $R(x,y):=\min\{|x|,|y|,|x+y|\}$. We start from \eqref{eq:third-master} and split the domain
		$\{U(x,y)\le T\}$ according to whether all three quantities
		$|x|$, $|y|$, and $|x+y|$ are in the tail region:
		\begin{align*}
			\{U(x,y)\le T\}=
			\{U(x,y)\le T,\ R(x,y)<1\}\cup
			\{1\le U(x,y)\le T,\ R(x,y)\ge1\}.
		\end{align*}
		We first consider the region where $R(x,y)<1$. By
		\eqref{eq:third-kernel-bound},
		\[
		|K(x)K(y)K(-x-y)|
		\le
		C(1+|x|)^{-2/3}(1+|y|)^{-2/3}
		(1+|x+y|)^{-2/3}.
		\]
		For instance, on $\{|x|<1\}$ one has
		$1+|x+y|\asymp1+|y|$, uniformly in $x$, and hence
		\[
		\int_{\{|x|<1\}}
		|K(x)K(y)K(-x-y)|\,\mathrm dx\,\mathrm dy
		\le
		C\int_{\R}(1+|y|)^{-4/3}\,\mathrm dy
		<\infty.
		\]
		The regions $\{|y|<1\}$ and $\{|x+y|<1\}$ are treated in the same
		way. Since $0\le1-U(x,y)/T\le1$, the contribution of
		$\{U(x,y)\le T,\ R(x,y)<1\}$ to \eqref{eq:third-master} is therefore
		$O(1)$, uniformly in $T$. We now turn to the region $\{1\le U(x,y)\le T,\ R(x,y)\ge1\}$. Here the tail expansion \eqref{eq:K-tail} can be applied simultaneously
		to all three factors, giving
		\begin{align*}
			&K(x)K(y)K(-x-y)
			=
			\left(\frac{\alpha_H}{\overline\gamma}\right)^3
			|x|^{-2/3}|y|^{-2/3}|x+y|^{-2/3}\\
			+&\,
			O\Big(
			|x|^{-5/3}|y|^{-2/3}|x+y|^{-2/3}+
			|x|^{-2/3}|y|^{-5/3}|x+y|^{-2/3}+
			|x|^{-2/3}|y|^{-2/3}|x+y|^{-5/3}
			\Big).
		\end{align*}
		The remainder is integrable on this region. Indeed, for the first term,
		$$
		\int_{\{|y|\ge1,\ |x+y|\ge1\}}
		|y|^{-2/3}|x+y|^{-2/3}\,\mathrm dy
		\le
		C(1+|x|)^{-1/3},
		$$
		and therefore
		$$
		\int_{\{R(x,y)\ge1\}}
		|x|^{-5/3}|y|^{-2/3}|x+y|^{-2/3}
		\,\mathrm dx\mathrm dy
		<\infty.
		$$
		The other two terms are treated in the same way. Since
		$|x|^{-2/3}|y|^{-2/3}|x+y|^{-2/3}$ is homogeneous of degree $-2$
		and $U$ is homogeneous of degree one, the integral over each dyadic annulus $\{2^j\le U(x,y)\le2^{j+1}\}$, $j\in \N$, has the same strictly positive value. Summing the annuli up to scale $T$ gives the logarithmic term, while the contribution of the factor $U(x,y)/T$ is $O(1)$. Hence there exists $c_0>0$ such that
		\[
		\int_{\{1\le U(x,y)\le T\}}
		|x|^{-2/3}|y|^{-2/3}|x+y|^{-2/3}
		\left(1-\frac{U(x,y)}T\right)\,\mathrm dx\mathrm dy
		=c_0\log T+O(1).
		\]
		Moreover,
		\[
		\int_{\{1\le U(x,y)\le T,\ R(x,y)<1\}}
		|x|^{-2/3}|y|^{-2/3}|x+y|^{-2/3}\,\mathrm dx\,\mathrm dy
		=O(1),
		\]
		uniformly in $T$. Indeed, by symmetry it suffices to consider the region
		$\{|x|<1,\ 1\le U(x,y)\le T\}$. If $|y|>2$, then
		$|x+y|\asymp |y|$ uniformly in $|x|<1$, and hence
		\[
		|x|^{-2/3}|y|^{-2/3}|x+y|^{-2/3}
		\le
		C|x|^{-2/3}|y|^{-4/3},
		\]
		which is integrable on $\{|x|<1,\ |y|>2\}$. On the remaining bounded
		region $\{|x|<1,\ |y|\le2,\ U(x,y)\ge1\}$, the singularities along
		$x=0$, $y=0$, and $x+y=0$ are locally integrable since $-2/3>-1$.
		Thus removing the part where $R(x,y)<1$ from $\{1\le U(x,y)\le T\} $ changes the dyadic sum by only $O(1)$. 
		Consequently \eqref{eq:third-master} yields
		\[
		\frac1T\int_{[0,T]^3}
		K(t_1-t_2)K(t_2-t_3)K(t_3-t_1)\,\mathrm dt_1\mathrm dt_2\mathrm dt_3
		=
		\left(\frac{\alpha_H}{\overline\gamma}\right)^3c_0\log T+O(1).
		\]
		Combining this with \eqref{eq:third-trace-1} and
		\eqref{eq:third-stationary-replacement},
		\[
		\E[F_T^3]
		=
		2\left(\frac{\alpha_H}{\overline\gamma}\right)^3
		c_0T^{-1/2}\log T+O(T^{-1/2}),
		\]
		and the leading coefficient is nonzero. This proves the lower bound
		and completes the proof.
	\end{proof}

	With these estimates in hand, we can now prove the stated approximation rates.
	
	\begin{proof}[Proof of Theorem~\ref{thm:cfou-density}]
		By \eqref{eq:cfou-CLT} and \eqref{eq:cfou-cov-rate},
		\[
		\Sigma_T\rightarrow \Sigma_\infty,
		\qquad
		\Sigma_\infty\text{ is positive definite},
		\qquad
		\widetilde F_T\xrightarrow{\mathrm{law}}N_2(0,\Sigma_\infty).
		\]
		Applying Theorem~\ref{thm:general-optimal} along an arbitrary sequence
		$T_n\to\infty$ and using a standard sequential contradiction argument,
		we obtain, for all sufficiently large $T$,
		\[
		\|p_T-\phi_T\|_{W^{m,r}(\R^2)}
		\asymp
		d_\ast\bigl(\widetilde F_T,N_2(0,\Sigma_T)\bigr)
		\asymp
		M_{\C}(F_T).
		\]
		Combining Lemma~\ref{lem:cfou-third-moments} with
		\eqref{eq:cfou-defect-rate} and \eqref{eq:rho-tau-comparison}, we obtain,
		for all sufficiently large $T$,
		\begin{equation}\label{eq:MC-rate-cfou}
			M_{\C}(F_T)\asymp\tau_H(T).
		\end{equation}
		This proves \eqref{eq:cfou-optimal-CT}. Applying Theorem~\ref{thm:intro-main} in the same way gives
		\[
		\|p_T-\phi_\infty\|_{W^{m,r}(\R^2)}
		\asymp
		d_\ast\bigl(\widetilde F_T,N_2(0,\Sigma_\infty)\bigr)
		\asymp
		M_{\C}(F_T)+\|\Sigma_T-\Sigma_\infty\|_{\HS}.
		\]
		If $H\in(1/2,5/8]$, then \eqref{eq:MC-rate-cfou} gives
		$M_{\C}(F_T)\asymp T^{-1/2}=\rho_H(T)$, while
		\eqref{eq:cfou-cov-rate} gives
		$\|\Sigma_T-\Sigma_\infty\|_{\HS}\le C T^{4H-3}\le C T^{-1/2}$.
		If $H\in(5/8,3/4)$, then \eqref{eq:cfou-cov-rate} gives $\|\Sigma_T-\Sigma_\infty\|_{\HS}\asymp T^{4H-3}=\rho_H(T)$,
		and \eqref{eq:rho-tau-comparison} gives
		$M_{\C}(F_T)\asymp\tau_H(T)\le C\rho_H(T)$.
		Thus in both cases
		\[
		M_{\C}(F_T)+\|\Sigma_T-\Sigma_\infty\|_{\HS}\asymp\rho_H(T),
		\]
		which proves \eqref{eq:cfou-explicit-Cinfty}.
	\end{proof}

	\begingroup
	\setlength{\emergencystretch}{1.5em}
	\hbadness=2000
	\printbibliography
	\endgroup

\end{document}